\def\today{\number\day\space\ifcase\month\or   January\or February\or
   March\or April\or May\or June\or   July\or August\or September\or
   October\or November\or December\fi\   \number\year}
\theoremstyle{definition}
\newtheorem{lma}{Lemma}[section]
\newaliascnt{thmCt}{lma}
\newtheorem{thm}[thmCt]{Theorem}
\newaliascnt{corCt}{lma}
\newtheorem{cor}[corCt]{Corollary}
\newaliascnt{propCt}{lma}
\newtheorem{prop}[propCt]{Proposition}
\newtheorem*{thm*}{Theorem}
\newtheorem*{cor*}{Corollary}
\newtheorem*{prop*}{Proposition}
\newcounter{theoremintro}
\newtheorem{thmintro}[theoremintro]{Theorem}
\newtheorem{corintro}[theoremintro]{Corollary}
\newaliascnt{pgrCt}{lma}
\newaliascnt{dfCt}{lma}
\newtheorem{df}[dfCt]{Definition}
\newaliascnt{remCt}{lma}
\newtheorem{rem}[remCt]{Remark}
\newaliascnt{remsCt}{lma}
\newaliascnt{egCt}{lma}
\newtheorem{eg}[egCt]{Example}
\newaliascnt{egsCt}{lma}
\newaliascnt{qstCt}{lma}
\newtheorem{qst}[qstCt]{Question}
\newaliascnt{pbmCt}{lma}
\newaliascnt{notaCt}{lma}
\newtheorem{nota}[notaCt]{Notation}
\newaliascnt{cnjCt}{lma}
\newcommand{\beq}{\begin{equation}}
\newcommand{\eeq}{\end{equation}}
\newcommand{\beqa}{\begin{eqnarray*}}
\newcommand{\eeqa}{\end{eqnarray*}}
\newcommand{\bal}{\begin{align*}}
\newcommand{\eal}{\end{align*}}
\newcommand{\bi}{\begin{itemize}}
\newcommand{\ei}{\end{itemize}}
\newcommand{\be}{\begin{enumerate}}
\newcommand{\ee}{\end{enumerate}}
\newcommand{\dt}{\delta}
\newcommand{\ep}{\varepsilon}
\newcommand{\zt}{\zeta}
\newcommand{\Z}{{\mathbb{Z}}}
\newcommand{\R}{{\mathbb{R}}}
\newcommand{\C}{{\mathbb{C}}}
\newcommand{\N}{{\mathbb{N}}}
\newcommand{\K}{{\mathcal{K}}}
\newcommand{\B}{{\mathcal{B}}}
\newcommand{\U}{{\mathcal{U}}}
\newcommand{\T}{{\mathbb{T}}}
\newcommand{\D}{{\mathcal{D}}}
\newcommand{\Ot}{{\mathcal{O}_2}}
\newcommand{\OI}{{\mathcal{O}_{\I}}}
\newcommand{\id}{{\mathrm{id}}}
\newcommand{\diag}{{\mathrm{diag}}}
\newcommand{\Aut}{{\mathrm{Aut}}}
\newcommand{\Ad}{{\mathrm{Ad}}}
\newcommand{\Cu}{{\mathrm{Cu}}}
\newcommand{\dirlim}{\varinjlim}
\newcommand{\ifo}{if and only if }
\newcommand{\ca}{$C^*$-algebra}
\newcommand{\uca}{unital $C^*$-algebra}
\newcommand{\fd}{finite dimensional}
\newcommand{\Rp}{Rokhlin property}
\newcommand{\I}{\infty}
\title[]{Compact group actions with the Rokhlin property}
\date{\today}
\thanks{}
\author[Eusebio Gardella]{Eusebio Gardella}
\address{Eusebio Gardella\\
Westf\"{a}lische Wilhelms-Universit\"{a}t M\"{u}nster, Fachbereich
Mathematik, Einsteinstrasse 62, 48149 M\"{u}nster, Germany}
\email{gardella@uni-muenster.de}
\urladdr{http://wwwmath.uni-muenster.de/u/gardella/}
\subjclass[2010]{Primary 46L55; Secondary 46L35, 46L80}
\keywords{Rokhlin property, crossed product, equivariant semiprojectivity, compact Lie group, $K$-theory, Cuntz semigroup}
\begin{document}

\begin{abstract}
We provide a systematic and in-depth study of compact group actions with the Rokhlin property. 
It is show that the Rokhlin property is generic in some cases of interest; the case of totally
disconnected groups being the most interesting one. One of our main results asserts that
the inclusion of the
fixed point algebra induces an order-embedding on $K$-theory, and that it has a splitting
whenever it is restricted to finitely generated subgroups.

We develop new results in the context 
of equivariant semiprojectivity to study actions with the Rokhlin property.  
For example, we characterize when the translation action of a compact group on itself is equivariantly
semiprojective. As an application, it is shown that every Rokhlin action
of a compact Lie group of dimension at most one is a dual action. Similarly, for an action of a compact Lie
group $G$ on $C(X)$, the Rokhlin property is equivalent to freeness together with triviality of the 
principal $G$-bundle $X\to X/G$.
\end{abstract}

\maketitle
\tableofcontents

\renewcommand*{\thetheoremintro}{\Alph{theoremintro}}
\section{Introduction}

The Rokhlin property for discrete group actions on $C^*$-algebras first appeared, at least implicitly, 
in the late 1970's and early 1980's
Fack and Mar\'echal \cite{FacMar_classification_1979}, and Herman and Jones \cite{HerJon_period_1982},
on cyclic group actions on UHF-algebras, as well as in the work of Herman and Ocneanu
\cite{HerOcn_spectral_1986} on integer actions on UHF-algebras.
In \cite{Izu_finiteI_2004}, Izumi provided a formal definition of the Rokhlin property for an action of a finite group.
His classification theorems \cite{Izu_finiteI_2004, Izu_finiteII_2004} are among the major
classification results of finite group actions. These results were extended to the nonunital case
in \cite{GarSan_equivariant_2016}.

Later on, Hirshberg and Winter introduced in
\cite{HirWin_rokhlin_2007} a notion of the Rokhlin property for actions of arbitrary second-countable, compact groups
on \ca s. Crossed products by compact group actions with the Rokhlin property were studied by a number of authors;
see, for example, 
\cite{Izu_finiteI_2004, HirWin_rokhlin_2007, OsaPhi_crossed_2012, Phi_tracial_2011, San_crossed_2015, Gar_crossed_2014}. 

The purpose of this paper is to provide a systematic study of compact group actions with the Rokhlin property, in
a spirit similar to the one in \cite{Gar_rokhlin_2017}, where we explored compact group actions with finite Rokhlin dimension.
We hope that the results contained in this paper will provide the necessary
technical tools to attack problems in which the Rokhlin property of a compact group action can be proved to have a
relevant role. For example, the Rokhlin property for a particular action of $\Z_3$ was used in \cite{PhiVio_simple_2013}
to compute the Elliott invariant and the Cuntz semigroup of a certain simple separable exact \ca\
not anti-isomorphic to itself. More recently, Rokhlin actions of totally disconnected groups were used in
\cite{Gar_automatic_2017} to
construct uncountable many non-cocycle conjugate actions of groups with property (T) on UHF-algebras.

The approach used in this work yields new information even for actions of finite groups.
Indeed, most of our results, particularly those in Sections~3 and~4, had not been noticed
even in the well-studied case of finite groups.

\vspace{0.3cm}

This paper is structured as follows. In Section 2, we recall the definition of the Rokhlin property for a compact
group action, and establish some permanence properties. We also construct examples of compact
group actions with the Rokhlin property on several simple \ca s, and show the following:

\begin{thmintro} (See \autoref{cor:generic}). 
For compact totally disconnected group actions on UHF-absorbing \ca s, and
for compact group actions on $\Ot$-absorbing \ca, the Rokhlin property is generic.
\end{thmintro}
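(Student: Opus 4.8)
The plan is to prove genericity in the sense of Baire category: equip the set $\mathrm{Act}(G,A)$ of point-norm continuous $G$-actions on $A$ with the Polish topology of uniform point-norm convergence over $G$, and show that the actions with the \Rp\ form a dense $G_\delta$ subset. Write $D$ for the relevant strongly self-absorbing algebra (a UHF algebra in the totally disconnected case, and $\Ot$ in the general case), so that $A\cong A\otimes D$ throughout.

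First I would record the $G_\delta$ half. Fix a countable dense sequence $(a_j)$ in $A$ and, using second countability of $G$, a countable generating set of $C(G)$. Using the sequence-algebra description of the \Rp---the existence of a unital equivariant $*$-homomorphism $(C(G),\mathrm{Lt})\to(A_\infty\cap A',\alpha_\infty)$---one rewrites the property as the conjunction over $n\in\N$ of the assertion $U_n$ that there exist finite-stage Rokhlin data (a completely positive approximate partition of unity, indexed by $C(G/N)$ for totally disconnected $G$, or by an $\varepsilon$-net of $C(G)$ in general) that $\tfrac1n$-commutes with $a_1,\dots,a_n$ and is $\tfrac1n$-equivariant for $\alpha$. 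Each $U_n$ is open: the defining inequalities involve only finitely many elements and the quantity $\sup_{g\in G}\|\alpha_g(\cdot)-\beta_g(\cdot)\|$, so a sufficiently small perturbation of $\alpha$ preserves them. Hence the \Rp\ actions are exactly $\bigcap_n U_n$, a $G_\delta$.

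The substance is density, for which I would first fix a model action $\delta\in\mathrm{Act}(G,D)$ with the \Rp. For totally disconnected $G=\varprojlim G/N_k$ this is the inductive limit of the left-translation actions on $\mathcal{L}(\ell^2(G/N_k))$, whose rank-one projections furnish the Rokhlin towers (here one matches the supernatural number of the resulting UHF algebra against the one absorbed by $A$); for general $G$ the required Rokhlin action on $\Ot$ is one of those constructed in Section 2. Given an arbitrary $\beta$ and finite approximation data, the embedding $\iota\colon a\mapsto a\otimes 1_D$ is equivariant from $(A,\beta)$ to $(A\otimes D,\beta\otimes\delta)$, and $\beta\otimes\delta$ has the \Rp\ because $\delta$ does (transport the tower through $1\otimes D$). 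Transporting $\beta\otimes\delta$ back along an isomorphism $\theta\colon A\to A\otimes D$ yields a \Rp\ action $\alpha=\theta^{-1}(\beta\otimes\delta)\theta$, and the task is to choose $\theta$ so that $\alpha$ is close to $\beta$ on the prescribed finite set.

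This last point is the main obstacle. Writing $\theta$ as an approximate-unitary-equivalence limit $\mathrm{Ad}(u_k)\circ\iota$ of the first-factor embedding (possible since $D$ is strongly self-absorbing), one computes $\alpha_g(a)\approx\iota^{-1}\bigl(c_g(\beta_g(a)\otimes1)c_g^*\bigr)$ with cocycle $c_g=u_k^*(\beta\otimes\delta)_g(u_k)$; thus $\alpha$ is close to $\beta$ precisely when $c_g$ approximately commutes with $\iota(A)$, equivalently when the absorbing unitaries $u_k$ can be chosen approximately fixed by $\beta\otimes\delta$. Producing such approximately equivariant absorbing unitaries is exactly where the equivariant semiprojectivity results of the paper enter: one lifts the model homomorphism $C(G)\to D$ equivariantly and uses it to run an \emph{equivariant} Toms--Winter absorption, so that $\iota$ becomes approximately unitarily equivalent to an isomorphism through $(\beta\otimes\delta)$-invariant unitaries. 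With the cocycle $c_g$ thereby trivialized up to $\varepsilon$, the action $\alpha$ lies in the prescribed neighbourhood of $\beta$ and in $U_n$; hence each $U_n$ is dense. Baire's theorem then gives that $\bigcap_n U_n$ is a dense $G_\delta$, which is the assertion that the \Rp\ is generic.
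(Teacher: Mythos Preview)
Your $G_\delta$ argument is essentially the paper's, and your overall density strategy---conjugate $\beta$ to $\beta\otimes\delta$ through an absorbing isomorphism and check closeness---is also the paper's. The gap is in how you close the density step.

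You correctly isolate the obstacle: with $\theta\approx\Ad(u_k)\circ\iota$, the conjugated action differs from $\beta$ by the cocycle $c_g=u_k^*(\beta\otimes\delta)_g(u_k)$, and you want $c_g$ approximately trivial. You then propose to force this by choosing the $u_k$ approximately $(\beta\otimes\delta)$-invariant via an ``equivariant Toms--Winter absorption'', invoking the equivariant semiprojectivity results of Section~4. This does not work as stated. The equivariant semiprojectivity of $(C(G),\texttt{Lt})$ is only proved for abelian compact Lie groups of dimension at most one (\autoref{thm:C(G)eqsj}), whereas the genericity result is claimed for \emph{all} compact groups on $\Ot$-absorbing algebras and all totally disconnected groups on UHF-absorbing algebras. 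Moreover, even where it holds, equivariant semiprojectivity of $C(G)$ does not by itself produce approximately invariant absorbing unitaries; you would need an equivariant version of the half-flip, which the paper does not establish.

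The paper sidesteps this entirely. It never asks the absorbing unitary $w$ to be approximately invariant. Instead it requires the model action $\gamma$ on $D$ to have the \emph{strong} Rokhlin property (\autoref{df:StrongRp}): there exist honest equivariant unital homomorphisms $\varphi\colon C(G)\to D$, not merely into $D_\infty$, that approximately commute with any prescribed finite subset of $D$. One first fixes the (non-equivariant) $w$ implementing $\theta(a\otimes 1)\approx a$ on the given finite set, writes $w\approx\sum x_\ell\otimes d_\ell$, and \emph{then} chooses $\varphi$ so that $\varphi(f)$ nearly commutes with each $d_\ell$. The Rokhlin witness for the conjugated action is $\psi(f)=\theta(1_A\otimes\varphi(f))$; approximate equivariance of $\psi$ for $\beta$ and approximate centrality in $A$ follow from exact equivariance of $\varphi$ together with the commutation $[\varphi(f),d_\ell]\approx 0$, with no condition on $w$ beyond the original non-equivariant one. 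Closeness of $\beta$ to $\alpha$ is checked directly from $\rho^{-1}(a)\approx a\otimes 1_D$ and $(\alpha_g\otimes\gamma_g)(a\otimes 1)=\alpha_g(a)\otimes 1$. The existence of strong Rokhlin actions on $\Ot$ (\autoref{lma:OtStrRp}) and on $D_G$ for totally disconnected $G$ (\autoref{thm:TotDiscUHF}) supplies the model $\gamma$ in each case.

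In short: the missing idea is the strong Rokhlin property of the model action, which lets you choose the Rokhlin map \emph{after} the absorbing unitary and thus avoid any equivariant absorption argument.
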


In Section 3, we study the $K$-theory and Cuntz semigroup of fixed point algebras and crossed products, as well
as the equivariant $K$-theory. For example, the augmentation ideal in the group ring annihilates the equivariant
$K$-theory of a compact group action with the Rokhlin property; see \autoref{thm:DiscrKthy}.
We also show the following:

\begin{thmintro} (See \autoref{thm:InjK-Thy}). 
If $\alpha\colon G\to\Aut(A)$ has the Rokhlin property, then 
$K_\ast(A\rtimes_\alpha G)$ can be naturally identified, as a partially ordered group,
with a subgroup of $K_\ast(A)$. The inclusion $K_\ast(A\rtimes_\alpha G)\hookrightarrow K_\ast(A)$
is locally split, and the local splitting is natural with respect to certain maps.
\end{thmintro}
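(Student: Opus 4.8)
The plan is to identify the inclusion in the statement concretely and then build a one-sided inverse on $K$-theory out of the Rokhlin datum. Since an action with the \Rp{} is saturated (as recorded earlier), the crossed product $A\rtimes_\alpha G$ is Morita equivalent to the fixed point algebra $A^\alpha$, so $K_\ast(A\rtimes_\alpha G)\cong K_\ast(A^\alpha)$ and the map to be studied becomes the map $\iota_\ast\colon K_\ast(A^\alpha)\to K_\ast(A)$ induced by the inclusion $\iota\colon A^\alpha\hookrightarrow A$. The whole theorem will follow once I produce a $\ast$-\hm{} $\rho\colon A\to (A^\alpha)_\infty$ whose restriction to $A^\alpha$ is the canonical (constant) embedding $j\colon A^\alpha\hookrightarrow (A^\alpha)_\infty$; indeed this forces $\rho_\ast\circ\iota_\ast=j_\ast$, and I will then analyse $j_\ast$.

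To construct $\rho$, I use the \Rp{} in the form of a unital equivariant $\ast$-\hm{} $\mu\colon (C(G),\mathrm{Lt})\to (A_\infty\cap A',\alpha_\infty)$, where $A_\infty=\ell^\infty(\N,A)/c_0(\N,A)$ and $\mathrm{Lt}$ is left translation. As $\mu(C(G))$ commutes with the canonical copy of $A$ in $A_\infty$, the pair $(\mu,\kappa)$, with $\kappa\colon A\to A_\infty$ the constant embedding, assembles into a $\ast$-\hm{} $\Lambda\colon C(G)\otimes A\to A_\infty$, $f\otimes a\mapsto\mu(f)\kappa(a)$, which is equivariant for the diagonal action $\mathrm{Lt}\otimes\alpha$ on $C(G)\otimes A$. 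The \am{} $\Psi$ of $C(G)\otimes A=C(G,A)$ given by $(\Psi F)(g)=\alpha_{g^{-1}}(F(g))$ conjugates $\mathrm{Lt}\otimes\alpha$ to $\mathrm{Lt}\otimes\id$; hence the fixed point algebra of the diagonal action is $\{\,g\mapsto\alpha_g(a)\ :\ a\in A\,\}\cong A$. Restricting $\Lambda$ to these fixed points and composing with this identification yields $\rho\colon A\to (A_\infty)^\alpha$, with $\rho(a)=\Lambda(g\mapsto\alpha_g(a))$; applying the canonical conditional expectation onto $A^\alpha$ to a representing sequence lets me regard $\rho$ as landing in $(A^\alpha)_\infty$. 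Finally, for $a\in A^\alpha$ the function $g\mapsto\alpha_g(a)$ is the constant $1\otimes a$, so $\rho(a)=\mu(1)\kappa(a)=a$ and $\rho\circ\iota=j$ as required.

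It remains to analyse $j_\ast$ and organise the splitting. From the short exact sequence $0\to c_0(\N,A^\alpha)\to\ell^\infty(\N,A^\alpha)\to (A^\alpha)_\infty\to0$ and $K_\ast(c_0(\N,A^\alpha))\cong\bigoplus_\N K_\ast(A^\alpha)$, the kernel of the map $K_\ast(\ell^\infty(\N,A^\alpha))\to K_\ast((A^\alpha)_\infty)$ consists of classes whose image in $\prod_\N K_\ast(A^\alpha)$ is finitely supported; since the constant class $j_\ast(x)$ has image the constant tuple $(x,x,\dots)$, which is not finitely supported unless $x=0$, the map $j_\ast$ is injective. As $\rho_\ast\circ\iota_\ast=j_\ast$, this gives injectivity of $\iota_\ast$. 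For the order-embedding statement, $\rho_\ast$ is positive, so $\iota_\ast(x)\ge0$ forces $j_\ast(x)\ge0$; lifting a representing projection over $(A^\alpha)_\infty$ to projections $P_n$ over $A^\alpha$ shows $x=[P_n]\ge0$, so $\iota_\ast$ is an order-embedding on $K_0$. For local splitting, given a finitely generated subgroup $H\le K_\ast(A^\alpha)$ I represent its (finitely many) generators and apply $\rho$; lifting the resulting projections and unitaries over $(A^\alpha)_\infty$ to a common large index $N$ produces honest classes in $K_\ast(A^\alpha)$ and a homomorphism $\sigma_H$ on $\iota_\ast(H)$ with $\sigma_H\circ\iota_\ast=\id_H$. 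Naturality of $\sigma_H$ holds exactly for those maps compatible with the chosen Rokhlin homomorphisms $\mu$, since $\rho$ depends on this choice. The main obstacle is precisely this last point: there is no global splitting because the index-$N$ lift is only consistent on finitely generated subgroups, and the delicate work is to make the common-index construction well defined and functorial (for the relevant class of maps), together with checking, for non-finite $G$, that $\rho$ can indeed be arranged to take values in $(A^\alpha)_\infty$.
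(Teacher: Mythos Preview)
Your approach is essentially the ``sequentially split'' reformulation of the paper's argument: the paper (via \autoref{thm:ApproxHomFixingFPA}) builds, for each tolerance, an approximately multiplicative linear map $\psi\colon A\to A^\alpha$ that is the identity on $A^\alpha$; you bundle these into a genuine $\ast$-\hm\ $\rho\colon A\to (A^\alpha)_\infty$ with $\rho\circ\iota=j$, and then argue $K$-theoretically via properties of the constant embedding $j$. This packaging is cleaner for injectivity and the order-embedding, and is in the spirit of \cite{BarSza_sequentially_2016}, which the paper cites for part~(1). Two technical points you gloss over are worth flagging: you should work in $A_{\I,\alpha}$ rather than $A_\I$ so that the equivariance and continuity make sense; and the identification $(A_{\I,\alpha})^{\alpha_\I}=(A^\alpha)_\I$ (which you invoke when you ``apply the conditional expectation to a representing sequence'') needs the equicontinuity coming from $\ell^\I_\alpha$---it fails in the non-continuous sequence algebra.

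Where your proposal is genuinely incomplete is in the local splitting. First, you write $H\le K_\ast(A^\alpha)$, but the statement requires $H\le K_\ast(A)$; with your formulation the splitting on $\iota_\ast(H)$ is tautological because $\iota_\ast$ is already known to be injective. Taking $H\le K_\ast(A)$ and evaluating $\rho$ at a large index $N$ on representatives of generators, as you suggest, does produce classes in $K_\ast(A^\alpha)$; but you must also check that the \emph{relations} among the generators of $H$ survive (so that $\pi$ is a group homomorphism), and that $\pi\circ\iota_\ast=\id$ on all of $\iota_\ast^{-1}(H)$, not just on the chosen generators. The paper does this by hand: it fixes explicit witnesses (partial isometries/unitaries) for each torsion relation and for each identity $\iota_\ast(r)-\iota_\ast(s)=n([p]-[q])$, includes them in the finite set $F$, and then shows directly that the perturbed elements satisfy the same relations in $K_\ast(A^\alpha)$. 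In your framework the same argument goes through, but it requires noting that $\iota_\ast^{-1}(H)$ is again finitely generated (as a subgroup of a finitely generated abelian group), so only finitely many relations need to be witnessed, and then choosing $N$ large enough for all of them simultaneously. Finally, your naturality clause (``maps compatible with the chosen Rokhlin homomorphisms $\mu$'') is too vague to compare; the paper isolates a precise ``general setting'' (an inclusion $A\subseteq B$, a surjection $\tau\colon G\to G$, and compatible Rokhlin maps $\varphi_A,\varphi_B$ intertwined by $\tau^*$), for which a relative version of \autoref{thm:ApproxHomFixingFPA} yields compatible $\psi_A,\psi_B$ and hence compatible splittings.
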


A similar result is obtained for the Cuntz semigroup. These results,
as well as the main technical device used to prove them (\autoref{thm:ApproxHomFixingFPA}), seem not to have been
noticed even in the context of finite group actions (with the exception of part (1) in \autoref{thm:InjK-Thy}, which
was proved in the simple case by Izumi; see Theorem~3.13 in~\cite{Izu_finiteI_2004}).

In Section 4, we explore duality and other applications involving equivariant semiprojectivity. As the main technical
result of independent interest, we characterize those abelian compact groups that are equivariantly semiprojective:

\begin{thmintro} (See \autoref{thm:C(G)eqsj}). Let $G$ be an abelian compact group.
Then $C(G)$ is $G$-equivariantly semiprojective if and only if $G$ is a Lie group and $\dim(G)\leq 1$. \end{thmintro}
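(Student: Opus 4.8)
The plan is to translate the problem into a lifting question for representations of the Pontryagin dual $\hat{G}$. Writing $C(G)\cong C^\ast(\hat G)=\ov{\spn}\{u_\chi:\chi\in\hat G\}$ with the translation action given by $\text{Lt}_g(u_\chi)=\ov{\chi(g)}\,u_\chi$, a unital $G$-equivariant homomorphism $(C(G),\text{Lt})\to (B,\beta)$ is exactly a unitary representation $\chi\mapsto v_\chi$ of $\hat G$ in $B$ that is homogeneous for the $G$-spectrum, i.e. $\beta_g(v_\chi)=\ov{\chi(g)}\,v_\chi$. Thus $G$-equivariant semiprojectivity of $C(G)$ becomes the assertion that every such representation into a quotient $B/\ov{\bigcup_n I_n}$ must factor, for some $n$, through $B/I_n$. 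Recall that $G$ is a Lie group precisely when $\hat G$ is finitely generated, and that $\dim(G)=\mathrm{rank}(\hat G)$; so $\dim(G)\le 1$ together with $G$ Lie means exactly $\hat G\cong\Z\times F$ or $\hat G\cong F$ for a finite abelian group $F$. The whole statement is therefore equivalent to: the lifting problem above is solvable if and only if $\hat G\cong \Z\times F$ or $\hat G\cong F$.

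First I would treat the backward direction. Using that a product of such groups corresponds to a minimal tensor product of the associated $G$-algebras, I would reduce to the two atomic cases $G=F$ finite and $G=\T$. For finite $G$ the algebra $C(G)$ is finite dimensional and the data is a system of orthogonal Rokhlin projections permuted by $G$; these lift because finite-dimensional $C^\ast$-algebras are equivariantly semiprojective, with equivariance restored by averaging the lift over $G$ onto the correct spectral subspaces. For $G=\T$ the data is a single Rokhlin unitary $v$ with $\beta_\lambda(v)=\ov\lambda v$; I would lift $v$ arbitrarily, average against $\lambda$ over $\T$ to push the lift into the first spectral subspace, and then correct it to a unitary by functional calculus, using that its modulus lies in the fixed-point algebra and is close to $1$ at a sufficiently late stage. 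This is the standard argument that a single unitary lifts. Combining the two via the block decomposition supplied by the finite part handles $\hat G\cong\Z\times F$, the commutation relations becoming automatic once the finite spectral projections are lifted first.

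For the forward direction I would argue by contraposition in two cases. If $G$ is not a Lie group then $\hat G$ is not finitely generated, whence $C(G)$ is not finitely generated as a $C^\ast$-algebra; combined with the (equivariant) fact that a semiprojective algebra must be finitely generated, this rules out $G$-equivariant semiprojectivity. Equivalently, writing $G=\invlim G_i$ over its Lie quotients gives $C(G)=\dirlim C(G_i)$ equivariantly as a strictly increasing system, and a mapping-telescope argument shows that the identity of $C(G)$ cannot factor through a finite stage. If $G$ is Lie but $\dim(G)\ge 2$, then $\hat G\cong\Z^n\times F$ with $n\ge 2$ and in particular contains a copy of $\Z^2$; here I would reproduce, inside a suitable equivariant inductive system, the classical obstruction to lifting two commuting unitaries --- almost commuting unitaries with nonvanishing Bott invariant --- to exhibit an equivariant homomorphism out of $C(G)$ into a limit quotient that does not lift.

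The main obstacle will be the high-dimensional half of the forward direction: one must engineer an explicit equivariant inductive system $(B_k,\beta^{(k)})$ together with a $\hat G$-representation homogeneous for the $G$-spectrum that genuinely fails to lift, i.e. transport the Voiculescu-type obstruction for $C(\T^2)$ into the equivariant category while respecting the prescribed spectral grading. A secondary technical point, used throughout the backward direction, is to check that the spectral-subspace averaging restoring equivariance is compatible with the unitary and relation corrections, i.e. that the grading $B_k\cdot B_l\subset B_{k+l}$ permits polar correction within a single spectral subspace without destroying multiplicativity of $\chi\mapsto v_\chi$.
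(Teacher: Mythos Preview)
Your backward direction is essentially the paper's argument: the paper also treats the atomic cases $G=\Z_m$ and $G=\T$ by lifting non-equivariantly, averaging into the correct spectral subspace, and correcting via polar decomposition (plus a functional-calculus step to force order $m$ in the cyclic case), and then handles the general case $G=\Z_m\times H$ by an inductive block-decomposition argument exactly of the kind you sketch --- lift the finite part first to obtain orthogonal projections, then lift $C(H)$ inside the block-diagonal subalgebra $\sum_j p_jBp_j$. One caution: your phrase ``a product of such groups corresponds to a tensor product of the associated $G$-algebras, so reduce to the atomic cases'' is dangerous as stated, since (equivariant) semiprojectivity is not preserved under tensor products in general (indeed $C(\T)\otimes C(\T)=C(\T^2)$ fails); what actually works is the block argument you describe afterwards, and the paper formalizes this as an induction together with a lemma that equivariant semiprojectivity of $(G,A,\alpha)$ implies that of $(G\times H,A,\gamma)$ with $H$ acting trivially.

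Your forward direction takes a genuinely different route. The paper does not build any obstructions at all: it invokes the fact (from Phillips--S{\o}rensen--Thiel) that $G$-equivariant semiprojectivity implies ordinary semiprojectivity of the underlying algebra, then appeals to the S{\o}rensen--Thiel characterization of semiprojective commutative \ca s to conclude that $G$ is an ANR with $\dim(G)\le 1$, and finally uses that ANRs are locally contractible together with a theorem of Hofmann--Kramer on transitive actions to force $G$ to be Lie. This is a three-line argument modulo citations. Your plan --- a mapping-telescope obstruction for non-Lie $G$ and an equivariant Voiculescu-type obstruction for $\dim(G)\ge 2$ --- is sound and more self-contained, but considerably more work; you correctly identify the equivariant transport of the almost-commuting-unitaries obstruction as the main difficulty. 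Note also that your ``semiprojective implies finitely generated'' step for the non-Lie case implicitly uses the same equivariant-to-nonequivariant passage the paper cites, so the paper's route and yours converge there anyway. The trade-off is clear: the paper buys brevity by outsourcing to the literature; your approach would yield explicit counterexamples but at the cost of a nontrivial equivariant construction in the $\dim\ge 2$ case.
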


As an application, we have:

\begin{corintro} (See \autoref{thm:RokAreDual}).
Let $G$ be a compact abelian Lie group with $\dim(G)\leq 1$. Then every action of $G$ with the Rokhlin property is 
a dual action.
\end{corintro}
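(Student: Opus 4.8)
The plan is to reduce the statement to the existence of a \emph{single} unital $G$-equivariant $*$-homomorphism $(C(G),\mathrm{Lt})\to (M(A),\alpha)$, where $\mathrm{Lt}$ denotes the translation action, and then to manufacture such a homomorphism by feeding the Rokhlin property into the equivariant semiprojectivity provided by \autoref{thm:C(G)eqsj}. The first step is a Landstad-duality reformulation of what it means to be a dual action. Writing $\widehat{G}$ for the discrete Pontryagin dual and $\chi_\gamma\in C(G)$ for the character $\chi_\gamma(g)=\langle g,\gamma\rangle$, the Fourier transform identifies $C^\ast(\widehat{G})$ with $C(G)$ and carries the dual $G$-action on $C^\ast(\widehat{G})$ to $\mathrm{Lt}$. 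Consequently, a unital $G$-equivariant $*$-homomorphism $\varphi\colon (C(G),\mathrm{Lt})\to (M(A),\alpha)$ is exactly the same datum as a unitary representation $\gamma\mapsto u_\gamma:=\varphi(\chi_\gamma)\in M(A)$ of $\widehat{G}$ with $\alpha_g(u_\gamma)=\langle g,\gamma\rangle u_\gamma$ (the precise placement of conjugates being convention-dependent and harmless).

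Given such unitaries, a direct computation shows that $a\,u_\gamma^\ast\in A^\alpha$ whenever $a$ lies in the spectral subspace $A_\gamma=\{a:\alpha_g(a)=\langle g,\gamma\rangle a\}$, so that $A_\gamma=A^\alpha u_\gamma$ for every $\gamma$. Combined with the spectral-subspace decomposition $A=\ov{\bigoplus_\gamma A_\gamma}$ valid for a compact abelian group, this is precisely the hypothesis of Landstad's theorem, which then yields a $G$-equivariant isomorphism $A\cong A^\alpha\rtimes_\beta\widehat{G}$ with $\beta_\gamma=\mathrm{Ad}(u_\gamma)|_{A^\alpha}$ and $\alpha$ the dual action. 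Thus $\alpha$ is a dual action if and only if an equivariant unital $\varphi$ as above exists; note that $\varphi$ need not meet the relative commutant of $A$, so centrality will play no role.

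To produce $\varphi$, recall that the Rokhlin property furnishes a unital $G$-equivariant $*$-homomorphism $\mu\colon (C(G),\mathrm{Lt})\to F_\infty(A)$, and that for unital $A$ one has $F_\infty(A)=A_\infty\cap A'$, so composing with the unital equivariant inclusion $A_\infty\cap A'\hookrightarrow A_\infty=\ell^\infty(A)/c_0(A)$ gives a unital equivariant $\mu\colon C(G)\to A_\infty$. Since $G$ is a compact abelian Lie group with $\dim(G)\le 1$, \autoref{thm:C(G)eqsj} shows that $(C(G),\mathrm{Lt})$ is $G$-equivariantly semiprojective. Presenting $c_0(A)=\ov{\bigcup_n I_n}$ with $I_n$ the ($G$-invariant) ideal of sequences vanishing past the $n$-th coordinate, and noting the equivariant identification $\ell^\infty(A)/I_n\cong\prod_{k>n}A$, equivariant semiprojectivity lifts $\mu$ through the equivariant quotient $\ell^\infty(A)/I_n\to A_\infty$ for some $n$. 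Composing this lift with any coordinate evaluation $\prod_{k>n}A\to A$ produces the desired unital $G$-equivariant $*$-homomorphism $C(G)\to A$, and the first two paragraphs then exhibit $\alpha$ as a dual action.

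The main obstacle is the lifting step: one must set up the central-sequence target so that the lifting property of equivariant semiprojectivity applies literally and returns a genuine, single-index equivariant homomorphism into $A$ rather than a merely asymptotic family. For nonunital $A$ this forces one to work in $M(A)$ throughout and to dispose of the annihilator ideal in $F_\infty(A)=(A_\infty\cap A')/\mathrm{Ann}(A)$ before reaching $A_\infty$; the unital case is clean precisely because then $\mathrm{Ann}(A)=0$ and the inclusion $A_\infty\cap A'\hookrightarrow A_\infty$ is unital. Once an honest equivariant lift into $A$ (respectively $M(A)$) is in hand, no approximation remains: the unitaries $u_\gamma$ sit in the correct spectral subspaces exactly, and Landstad duality closes the argument.
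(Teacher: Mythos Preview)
Your proof is correct and follows essentially the same route as the paper: use equivariant semiprojectivity of $(C(G),\texttt{Lt})$ from \autoref{thm:C(G)eqsj} to lift the Rokhlin map through the filtration $c_0(\N,A)=\overline{\bigcup_n I_n}$ and extract a genuine unital equivariant homomorphism $C(G)\to A$, then invoke Landstad duality. The paper simply cites Theorem~4 of \cite{Lan_duality_1979} for the last step, whereas you spell out the spectral-subspace verification $A_\gamma=A^\alpha u_\gamma$ explicitly; this is a harmless elaboration of the same argument. One minor point: in this paper the Rokhlin map lands in $A_{\infty,\alpha}\cap A'$ (the $\alpha$-continuous part), and the lifting takes place inside $\ell^\infty_\alpha(\N,A)$ rather than all of $\ell^\infty(\N,A)$, since the $G$-action on the full sequence algebra need not be continuous---your write-up should carry the continuity decoration throughout, but this does not affect the argument.
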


This corollary had also not been noticed in the context of finite group actions. For finite abelian group actions on
Kirchberg algebras, this result can be used to give a simpler proof of the classification theorems in \cite{Izu_finiteII_2004}.

As a further application of the techniques in this section, we characterize those topological dynamical systems 
with the Rokhlin property:

\begin{thmintro} (See \autoref{thm:CommSysts}). Let $G$ be an compact Lie group, and let $G\curvearrowright X$
be an action on a compact Hausdorff space $X$. Then the induced action on $C(X)$ has the Rokhlin property
if and only if $G\curvearrowright X$ is free and the principal $G$-bundle $X\to X/G$ is trivial. \end{thmintro}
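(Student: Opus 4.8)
The plan is to reduce the entire statement to a single topological condition: the induced action on $C(X)$ has the Rokhlin property if and only if there is a continuous $G$-equivariant map $\sigma\colon X\to G$, where $G$ carries the left-translation action on itself. First I would record that such a $\sigma$ exists precisely when $G\curvearrowright X$ is free with trivial bundle. Given $\sigma$, the assignment $x\mapsto \sigma(x)^{-1}x$ is $G$-invariant and descends to a continuous section $X/G\to X$, which simultaneously forces freeness (if $gx=x$ then $\sigma(x)=\sigma(gx)=g\sigma(x)$, so $g=e$) and yields an equivariant homeomorphism $X\cong G\times(X/G)$; conversely a trivialization provides such a $\sigma$ by projecting onto $G$. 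This avoids invoking a slice theorem. The easy implication then follows: from an equivariant $\sigma$ I define $\varphi\colon C(G)\to C(X)$ by $\varphi(c)=c\circ\sigma$, which is unital and equivariant since $\varphi(\mathrm{Lt}_g c)(x)=c(g^{-1}\sigma(x))=\varphi(c)(g^{-1}x)$, and as $C(X)$ is commutative its constant-sequence image lands in the relative commutant, giving the equivariant Rokhlin homomorphism into $C(X)_\infty\cap C(X)'$.

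The substance is the converse. Unwinding the Rokhlin property gives a unital equivariant homomorphism $\varphi\colon C(G)\to C(X)_\infty\cap C(X)'$; as everything is commutative, this is just a unital equivariant $\ast$-homomorphism $C(G)\to C(X)_\infty$. Embedding $G\hookrightarrow U(N)$ via a faithful representation $u$ presents $C(G)$ through the entries of $u$, so $\varphi$ is encoded by a unitary $U\in M_N(C(X)_\infty)$ with spectrum in $u(G)$. Because a compact Lie group is an ANR, a neighborhood of $u(G)$ in $U(N)$ retracts onto $u(G)$, which lets me lift $U$ through $M_N(\ell^\infty(\N,C(X)))$ to honest unitaries $W_n=u\circ\sigma_n$, that is, to continuous maps $\sigma_n\colon X\to G$. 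Translating the equivariance of $\varphi$ and, by a standard reindexation argument using compactness of $G$ and continuity of the action, upgrading it to hold uniformly in $g$, I obtain for large $n$ a continuous map $\sigma\colon X\to G$ with $\sup_{g\in G}\sup_{x\in X} d_G(\sigma(gx),g\sigma(x))<\epsilon$ for a prescribed small $\epsilon$.

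Finally I would straighten this approximately equivariant map into an exactly equivariant one by a center-of-mass construction. Fixing a bi-invariant Riemannian metric on $G$, the estimate forces, for each $x$, the set $\{g^{-1}\sigma(gx):g\in G\}$ to have diameter at most $2\epsilon$; choosing $\epsilon$ below the convexity radius, the Karcher barycenter of the pushforward $\mu_x$ of Haar measure under $g\mapsto g^{-1}\sigma(gx)$ is well defined and depends continuously on $x$. Setting $\tau(x)=\mathrm{bary}(\mu_x)$ and using that left translations are isometries, so the barycenter is equivariant, together with the reparametrization $\mu_{hx}=(L_h)_*\mu_x$ gives $\tau(hx)=h\tau(x)$. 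Thus $\tau\colon X\to G$ is continuous and exactly equivariant, completing the reduction.

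I expect the main obstacle to be this straightening step, and in particular making the approximate equivariance uniform over all of $G$ so that the barycenters stay inside a convex ball; the geometry of the bi-invariant metric is what makes the averaging simultaneously well defined and equivariant. It is worth emphasizing that this route is purely topological and does not use equivariant semiprojectivity of $C(G)$. This is essential: the group here need not be abelian, and \autoref{thm:C(G)eqsj} fails for groups of dimension greater than one, whereas the present statement is meant to hold for every compact Lie group.
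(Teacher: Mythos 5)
Your proposal is correct, and it reaches the same reduction as the paper: both proofs come down to producing a single continuous equivariant map $X\to G$, and both finish identically (you build the $G$-invariant section $x\mapsto \sigma(x)^{-1}x$; the paper checks that $x\mapsto(\pi(x),\rho(x))$ is an equivariant homeomorphism onto $X/G\times G$). The way you produce that equivariant map, however, is genuinely different. The paper axiomatizes the needed lifting property as \emph{weak equivariant semiprojectivity} of $(G,C(G),\texttt{Lt})$ with respect to commutative $C^*$-algebras (\autoref{df:ewsj}): the Rokhlin homomorphism $\varphi\colon C(G)\to C(X)_{\I,\alpha}$ lifts to an \emph{equivariant} homomorphism into $\ell^\I_\alpha(\N,C(X))$, whose components are already honest equivariant maps $X\to G$; for a compact Lie group this lifting property is imported from Murayama's theorem that smooth $G$-manifolds are $G$-ANRs (\autoref{eg:C(G)eqsjCommut}). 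You instead lift \emph{non}-equivariantly, via a faithful representation $G\hookrightarrow U(N)$ and an ordinary retraction of a neighborhood of $u(G)$, and then repair equivariance with the Riemannian center-of-mass trick for a left-invariant metric; in effect you give a self-contained, hands-on proof of the statement the paper quotes from equivariant ANR theory. What your route buys is independence from $G$-ANR theory (and from the paper's alternative second argument via Landstad duality and triviality of coactions, \autoref{lma:TrivialCoact}); what the paper's route buys is scope: \autoref{thm:CommSysts} is proved for any compact group such that $C(G)$ is weakly equivariantly semiprojective with respect to commutative $C^*$-algebras, which by \autoref{prop:CantorGpWESj} includes non-Lie groups such as infinite products of compact Lie groups, whereas bi-invariant metrics, convexity radii and faithful finite-dimensional representations tie your argument to the Lie case (which is all the stated theorem requires). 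Your closing remark aims at a strawman: the paper never invokes \autoref{thm:C(G)eqsj} here, precisely because that result needs $G$ abelian with $\dim(G)\leq 1$; the commutative-category notion it does use is available for every compact Lie group.

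One step deserves more care than ``a standard reindexation argument.'' Pointwise-in-$g$ approximate equivariance of a lift of an element of $C(X)_\I$ does \emph{not}, by itself, improve to an estimate uniform over $G$ (compare the paper's example of the translation unitaries on $C(\T)$, which exhibit discontinuity of $\alpha_\I$ on $C(\T)_\I$). You must use that the Rokhlin homomorphism takes values in the continuous part $C(X)_{\I,\alpha}$, so that the lift $(u\circ\sigma_n)_{n\in\N}$ can be chosen in $M_N\bigl(\ell^\I_\alpha(\N,C(X))\bigr)$; then the functions $g\mapsto \sup_{x\in X} d_G\bigl(\sigma_n(gx),\,g\,\sigma_n(x)\bigr)$ form an equicontinuous family converging pointwise to $0$, and equicontinuity together with compactness of $G$ yields the uniform estimate. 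This is the same issue the paper resolves with Dini's theorem when proving that the two definitions of the Rokhlin property for circle actions agree. Since you explicitly flag this as the main obstacle, I regard it as a caveat to be written out carefully rather than a gap.
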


In this paper, we take $\N=\{1,2,\ldots\}$. For $n\in\N$ with $n\geq 2$, we denote by $\Z_n$ the cyclic group of order
$n$. The circle group will be denoted by $\T$. Lie groups are not be assumed to be connected; in particular, finite
groups are Lie groups. Groups will be assumed to be second-countable, and in particular metrizable.

\vspace{0.3cm}

\indent \textbf{Acknowledgements.} The author is grateful to Chris Phillips for
a number of conversations and feedback, and to Hannes Thiel for many enlightening 
conversations on (equivariant) semiprojectivity. He also thanks Siegfried Echterhoff for calling the author's
attention to the reference \cite{Lan_duality_1979}, and Tron Omland for helpful email correspondence regarding coactions.
Finally, the author thanks the referee for their thorough reading of the manuscript, and for giving very valuable
feedback. 

\vspace{0.3cm}

This paper constitutes essentially Chapter~IV of my PhD dissertation \cite{Gar_thesis_2015}, and a preliminary version has circulated as a
draft for more than two years. Since then, part~(1) of \autoref{thm:InjK-Thy} has also been proved in \cite{BarSza_sequentially_2016}.

\section{Permanence properties, examples and genericity}

In this section, we recall the definition of the Rokhlin property for a compact group action (\autoref{df:Rp}), and present a 
number of permanence properties (\autoref{thm: permanence properties}) that will allow us to construct, in Subsection~2.1, 
interesting examples of Rokhlin actions on simple, nuclear \ca s. Subsection~2.2 contains the main result of this section,
which shows that the Rokhlin property
is in some cases generic among all actions of a compact group $G$ (\autoref{cor:generic}); the case of totally disconnected
groups being the most interesting one.

We begin by introducing some useful notation and terminology.

\begin{df}\label{df:SeqAlgs}
Let $A$ be a \uca. Let $\ell^\I(\N,A)$ denote the set of all bounded sequences in $A$ with the supremum norm
and pointwise operations. Then $\ell^\I(\N,A)$ is a \uca, the unit being the constant sequence 1. Let
$$c_0(\N,A)=\{(a_n)_{n\in\N}\in\ell^\I(\N,A)\colon \lim_{n\to\I}\|a_n\|=0\}.$$
Then $c_0(\N,A)$ is an ideal in $\ell^\I(\N,A)$, and we denote the quotient
$\ell^\I(\N,A)/c_0(\N,A)$
by $A_\I$. We write $\kappa_A\colon \ell^\I(\N,A)\to A_\I$ for the quotient map. We identify $A$ with the unital subalgebra of 
$\ell^\I(\N,A)$ consisting of the constant sequences, and with a unital subalgebra of $A_\I$ by taking its image under $\kappa_A$. 
We write $A_\I\cap A'$ for the relative commutant of $A$ inside of $A_\I$.

\vspace{0.3cm}

If $\alpha\colon G\to\Aut(A)$ is an action of $G$ on $A$, there are actions
of $G$ on $\ell^\I(\N,A)$ and on $A_\I$, which we denote $\alpha^\I$ and $\alpha_\I$, respectively.
Note that
\[(\alpha_\I)_g(A_\I\cap A')\subseteq A_\I\cap A',\]
for all $g\in G$, so that $\alpha_\I$ restricts to an action on $A_\I\cap A'$, also denoted by $\alpha_\I$.

When $G$ is not discrete, these actions are not necessarily continuous:

\begin{eg}
Let $\alpha\colon \T\to \Aut(C(\T))$ be the action induced by left translation. For $n\in\N$,
let $u_n\in C(\T)$ be the unitary given by $u_n(\zeta)=\zeta^n$ for all $\zeta\in\T$. Set
$u=(u_n)_{n\in\N}\in \ell^\I(\N,C(\T))$. It is not difficult to check that the assignments
\[\zeta\mapsto (\alpha^\I)_\zeta(u) \ \mbox{ and } \zeta\mapsto (\alpha_\I)_\zeta(u),\]
are not continuous as a maps $\T\to \ell^\I(\N,C(\T))$ and $\T\to C(\T)_\I=C(\T)_\I\cap C(\T)'$,
respectively. We leave the details to the reader.
\end{eg}

To remedy this issue, we set
$$\ell^\I_\alpha(\N,A)=\{a\in \ell^\I(\N,A)\colon g\mapsto (\alpha_\I)_g(a) \ \mbox{ is continuous}\},$$
and $A_{\I,\alpha}=\kappa_A(\ell^\I_\alpha(\N,A))$. By construction, $A_{\I,\alpha}$ is invariant under $\alpha_\I$,
and the restriction of $\alpha_\I$ to $A_{\I,\alpha}$, which we also denote by $\alpha_\I$, is continuous.\end{df}

If $G$ is a locally compact group, we denote by
$\verb'Lt'\colon G\to\Aut(C_0(G))$ the action induced by left translation of $G$ on itself.
In some situations (particularly in \autoref{thm:CommSysts}),
we make a slight abuse of notation and also denote by $\texttt{Lt}$ the action of $G$ on itself by left
translation.

The following is essentially Definition~3.2 of \cite{HirWin_rokhlin_2007}, except that we do not require the
map $\varphi$ to be injective. However, this condition is automatic: the kernel of $\varphi$ is a
translation invariant ideal in $C(G)$, so it must be either $\{0\}$ or all of $C(G)$.

\begin{df}\label{df:Rp}
Let $A$ be a \uca, let $G$ be a second-countable compact group, and let $\alpha \colon G \to \Aut(A)$ be a continuous action. 
We say that $\alpha$ has the \emph{Rokhlin property} if there is an equivariant unital
homomorphism
\[\varphi\colon (C(G),\texttt{Lt})\to (A_{\I,\alpha}\cap A',\alpha_\I).\]\end{df}

Circle actions with the Rokhlin property have been studied in \cite{Gar_classificationI_2014}, \cite{Gar_classificationII_2014}, and \cite{Gar_circle_2014}.
There, the definition given reads as follows: An action $\alpha\colon \T\to\Aut(A)$ of the circle $\T$ on a unital
\ca\ $A$ is said to have the Rokhlin property if for every finite subset $F\subseteq A$ and for every $\ep>0$,
there exists a unitary $u$ in $A$ such that
\be\item $\|\alpha_\zeta(u)-\zeta u\|<\ep$ for all $\zeta\in\T$, and
\item $\|au-ua\|<\ep$ for all $a\in F$.\ee

For the sake of consistency, and since we will use results from \cite{Gar_classificationI_2014}, we check that both definitions agree.
The main point is to show that the estimate in (1) above is uniform on $\zeta\in\T$.

\begin{prop}
Let $A$ be a separable, unital \ca, and let $\alpha\colon \T\to\Aut(A)$ be a continuous action. Then $\alpha$
has the Rokhlin property in the sense of \autoref{df:Rp} if and only if it has the Rokhlin property in the sense
of \cite{Gar_classificationI_2014}.
\end{prop}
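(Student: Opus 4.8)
The plan is to reduce both notions to a statement about a single covariant unitary and then treat the two implications separately, the genuine work lying entirely in one of them. Let $z\in C(\T)$ denote the canonical unitary generator, normalized so that $\texttt{Lt}_\zeta(z)=\zeta z$ for every $\zeta\in\T$. Since $C(\T)$ is the universal \uca\ generated by a unitary, giving a unital \hm\ $\varphi\colon C(\T)\to A_{\I,\alpha}\cap A'$ is the same as choosing a unitary $u=\varphi(z)$ in $A_{\I,\alpha}\cap A'$, and such a $\varphi$ is equivariant \ifo it satisfies the single relation $(\alpha_\I)_\zeta(u)=\zeta u$ for all $\zeta\in\T$ (it suffices to check equivariance on the generator). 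Thus \autoref{df:Rp} for $\T$ is equivalent to the existence of a unitary $u\in A_{\I,\alpha}\cap A'$ with $(\alpha_\I)_\zeta(u)=\zeta u$, and the content of the proposition is to match this with the approximate, uniform-in-$\zeta$ condition of \cite{Gar_classificationI_2014}.

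For the implication from the approximate definition to \autoref{df:Rp}, I would use separability of $A$ to fix a dense sequence $(a_k)_{k\in\N}$ and, applying the approximate condition to $F_n=\{a_1,\dots,a_n\}$ and $\ep=1/n$, obtain unitaries $u_n\in A$ with $\sup_{\zeta\in\T}\|\alpha_\zeta(u_n)-\zeta u_n\|<1/n$ and $\|a u_n-u_n a\|<1/n$ for $a\in F_n$. Setting $u=\kappa_A((u_n)_n)$, the sequence is a unitary in $\ell^\I(\N,A)$, the commutation estimates force $u\in A_\I\cap A'$, and the uniform estimates give $\kappa_A((\alpha_\zeta(u_n)-\zeta u_n)_n)=0$, i.e. $(\alpha_\I)_\zeta(u)=\zeta u$ for each $\zeta$. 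It remains to see that $(u_n)_n\in\ell^\I_\alpha(\N,A)$, i.e. that $\zeta\mapsto(\alpha_\zeta(u_n))_n$ is norm-continuous into $\ell^\I(\N,A)$; this follows by splitting $\sup_n\|\alpha_\zeta(u_n)-\alpha_{\zeta'}(u_n)\|$ into the tail $n\geq N$, where $\|\alpha_\zeta(u_n)-\alpha_{\zeta'}(u_n)\|\leq 2/n+|\zeta-\zeta'|$ is uniformly small, and the finitely many remaining indices, where continuity of $\alpha$ applies directly. Hence $u\in A_{\I,\alpha}\cap A'$ and $\varphi$ exists.

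The reverse implication is the heart of the matter, and the main obstacle is exactly the point flagged before the statement: promoting the pointwise-in-$\zeta$ information coming from the quotient $A_\I$ to an estimate uniform in $\zeta$. Starting from a covariant unitary $u$ as above, I would lift it to a (not necessarily unitary) element $v=(v_n)_n\in\ell^\I_\alpha(\N,A)$, which exists since $A_{\I,\alpha}=\kappa_A(\ell^\I_\alpha(\N,A))$. The map $b\colon\T\to\ell^\I(\N,A)$ given by $b(\zeta)=(\alpha_\zeta(v_n)-\zeta v_n)_n$ is then norm-continuous (because $v\in\ell^\I_\alpha(\N,A)$), and $\kappa_A(b(\zeta))=(\alpha_\I)_\zeta(u)-\zeta u=0$, so $b$ actually takes values in $c_0(\N,A)$. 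Hence $b(\T)$ is a compact subset of $c_0(\N,A)$, and the standard fact that the coordinates of a compact subset of $c_0(\N,A)$ tend to $0$ uniformly over the set (proved by covering $b(\T)$ by finitely many balls centered at points of $c_0(\N,A)$) yields $\sup_{\zeta\in\T}\|\alpha_\zeta(v_n)-\zeta v_n\|\to 0$ as $n\to\I$, which is the uniformity we need. Finally I would unitarize: for $n$ large $v_n$ is invertible, so $u_n=v_n(v_n^* v_n)^{-1/2}$ is a unitary in $A$ with $\|u_n-v_n\|\to 0$; since each $\alpha_\zeta$ is isometric, $\|u_n-v_n\|$ dominates both $\sup_\zeta\|\alpha_\zeta(u_n)-\alpha_\zeta(v_n)\|$ and $\sup_\zeta\|\zeta u_n-\zeta v_n\|$, whence $\sup_{\zeta}\|\alpha_\zeta(u_n)-\zeta u_n\|\to 0$, while $u\in A'$ gives $\|a u_n-u_n a\|\to 0$ for each $a\in A$. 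Choosing $n$ large enough produces, for any prescribed finite $F\subseteq A$ and $\ep>0$, a unitary satisfying both conditions of \cite{Gar_classificationI_2014}. The only genuinely delicate ingredient is this uniform-decay argument on compacta in $c_0(\N,A)$; the remaining verifications are routine.
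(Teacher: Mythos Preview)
Your argument is correct, and for the nontrivial direction it takes a genuinely different route from the paper's. The paper lifts $u$ to a sequence of unitaries $(u_n)$, observes that each function $f_n(\zeta)=\|\alpha_\zeta(u_n)-\zeta u_n\|$ is continuous and that $f_n\to 0$ pointwise, then asserts that \wolog\ the sequence $(f_n)$ is decreasing and invokes Dini's theorem to upgrade to uniform convergence. Your proof instead lifts $u$ to an arbitrary $v\in\ell^\I_\alpha(\N,A)$, notes that $\zeta\mapsto b(\zeta)=\alpha^\I_\zeta(v)-\zeta v$ is a continuous map into $c_0(\N,A)$, and uses compactness of $b(\T)$ to get uniform decay of the coordinates; only afterwards do you unitarize via the polar part. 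Your compactness-in-$c_0$ argument is cleaner and more robust: it sidesteps the monotonicity hypothesis of Dini (whose ``\wolog'' justification in the paper is not entirely obvious), and it generalizes immediately to any compact group in place of $\T$ and to any finitely generated target algebra in place of $C(\T)$. The Dini approach, by contrast, is tailored to a single scalar-valued test function and needs the extra monotonicity step, though when it works it avoids the polar-decomposition bookkeeping.
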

\begin{proof}
Assume that an action $\alpha\colon\T\to\Aut(A)$ has the Rokhlin property in the sense of \cite{Gar_classificationI_2014}.
Since unital homomorphisms from $C(\T)$ into a \uca\ are in one-to-one correspondence with unitaries in the \ca,
it is clear that there is a unital homomorphism $\varphi\colon C(\T)\to A_\I$. By condition (2), the image of this homomorphism is
contained in the commutant of $A$. Moreover, the fact that $\|\alpha_\zeta(u)-\zeta u\|<\ep$ is arbitrarily small for
every fixed $\zeta\in\T$ implies that $\varphi$ is equivariant. Finally, the uniformity condition implies that the image
of $\varphi$ is contained in $A_{\alpha,\I}$. This verifies \autoref{df:Rp}.

To prove the converse implication, let $F\subseteq A$ be
a finite subset and let $\ep>0$. Let
$u\in A_{\I,\alpha}\cap A'$ be a unitary inducing a unital homomorphism as in \autoref{df:Rp} for $G=\T$.
Choose a sequence $(u_n)_{n\in\N}$ of unitaries in $A$ such that
\[\kappa_A((u_n)_{n\in\N})=u.\]

Since $u$ belongs to the commutant of $A$, we have
\beq\label{eq:1} \lim_{n\to\I}\|au_n-u_na\|=0\eeq
for all $a\in A$.

On the other hand, the fact that $(\alpha_\I)_\zeta(u)=\zeta u$ for all $\zeta\in \T$, shows that
\beq \label{eq:2} \lim_{n\to\I}\|\alpha_\zeta(u_n)-\zeta u_n\|=0\eeq
for all $\zeta\in\T$. This by itself does not imply that we can choose $n$ large enough so that
$\|\alpha_\zeta(u_n)-\zeta u_n\|<\ep$ holds for \emph{all} $\zeta\in \T$. Put in a different way,
one needs to show that the sequence of functions $f_n\colon \T\to \R$ given by
\[f_n(\zeta)=\|\alpha_\zeta(u_n)-\zeta u_n\|\]
for $\zeta\in\T$, converges \emph{uniformly} to zero. Equation (\ref{eq:2}) implies that $(f_n)_{n\in\N}$
converges pointwise to zero. Without loss of generality, we may assume that $f_n(\zeta)\geq f_{n+1}(\zeta)$
for all $n\in\N$ and all $\zeta\in\T$.

Since $u$ belongs to $A_{\alpha,\I}$, it follows that $f_n$ is continuous for
all $n\in\N$. Now, by Dini's theorem,
a decreasing sequence of continuous functions converges uniformly
if it converges pointwise to a continuous function.
Thus, for the finite set $F\subseteq A$ and the tolerance $\ep>0$ given,
we use this fact together with Equation (\ref{eq:1}) to find $n_0\in\N$ such that
$\|\alpha_\zeta(u_{n_0})-\zeta u_{n_0}\|<\ep$ for all $\zeta\in\T$, and
$\|au_{n_0}-u_{n_0}a\|<\ep$ for all $a\in F$.
\end{proof}

Since unital completely positive maps of order zero are necessarily homomorphisms, it is easy to see
that the Rokhlin property for a compact group action agrees with Rokhlin dimension zero in the sense of
Definition~3.2 in \cite{Gar_rokhlin_2017}. In particular, the following is a consequence of Theorem~3.8 in
\cite{Gar_rokhlin_2017} and Proposition~5.1 in~\cite{Gar_regularity_2014}.

\begin{thm}\label{thm: permanence properties}
Let $A$ be a unital \ca\, let $G$ be a second-countable compact group, and let $\alpha\colon G\to\Aut(A)$
be a continuous action of $G$ on $A$.
\be
\item Let $B$ be a unital \ca, and let $\beta\colon G\to\Aut(B)$ be a continuous action of $G$ on $B$.
Let $A\otimes B$ be any \ca\ completion of the algebraic tensor product of $A$ and $B$ for which the tensor
product action $g\mapsto (\alpha\otimes\beta)_g=\alpha_g\otimes\beta_g$ is defined. If $\alpha$ has the
Rokhlin property, then so does $\alpha\otimes \beta$.
\item Let $I$ be an $\alpha$-invariant ideal in $A$, and denote by $\overline{\alpha}
\colon G\to \Aut(A/I)$ the induced action on $A/I$. If $\alpha$ has the Rokhlin property, then so
does $\overline{\alpha}$.
\item Suppose that $\alpha$ has the Rokhlin property and let $p$ be an $\alpha$-invariant projection in
$A$. Set $B=pAp$ and denote by $\beta\colon G\to\Aut(B)$ the compressed action of $G$. Then $\beta$ has
the Rokhlin property.
\ee
Furthermore,
\be
\setcounter{enumi}{3}
\item Let $(A_n,\iota_n)_{n\in\N}$ be a direct system of unital \ca s with unital
connecting maps, and for each $n\in\N$, let $\alpha^{(n)}\colon G\to\Aut(A_n)$ be a
continuous action such that $\iota_n\circ\alpha^{(n)}_g=\alpha^{(n+1)}_g\circ\iota_n$ for all $n\in\N$ and all
$g\in G$. Suppose that $A=\varinjlim A_n$ and that $\alpha=\varinjlim \alpha^{(n)}$. If $\alpha^{(n)}$ has
the Rokhlin property for infinitely many values of $n$, then $\alpha$ has the Rokhlin property as well.\ee \end{thm}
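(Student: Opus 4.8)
The plan is to read \autoref{df:Rp} as the requirement that an equivariant unital \hm\ exist from $(C(G),\texttt{Lt})$ into the relevant central sequence algebra, and to obtain each of (1)--(3) by transporting the given \hm\ $\varphi\colon C(G)\to A_{\I,\alpha}\cap A'$ along a functorially induced \hm\ of central sequence algebras; part (4) requires a genuine diagonal argument and is where the work lies. For (2), the equivariant quotient map $\pi\colon A\to A/I$ induces, coordinatewise, a contractive unital equivariant \hm\ $\pi_\I\colon A_\I\to(A/I)_\I$ that carries $A_{\I,\alpha}$ into $(A/I)_{\I,\ov\alpha}$ (orbit-map continuity is preserved by a contractive equivariant map) and $A'$ into $(A/I)'$ (it sends the constant copy of $A$ onto that of $A/I$); then $\pi_\I\circ\varphi$ witnesses the Rokhlin property for $\ov\alpha$.

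For (1), the sequencewise map $(a_n)_n\mapsto(a_n\otimes 1_B)_n$ descends to a unital \hm\ $\iota\colon A_\I\to(A\otimes B)_\I$ that is equivariant because $\beta_g(1_B)=1_B$, sends $A_{\I,\alpha}$ into $(A\otimes B)_{\I,\alpha\otimes\beta}$, and maps $A'$ into $(A\otimes B)'$ (an element commuting asymptotically with $A$ has image commuting with every elementary tensor $a\otimes b$, hence with all of $A\otimes B$ by linearity and density); then $\iota\circ\varphi$ works. For (3), since $\varphi(C(G))\subseteq A'$ and $p$ is a constant central sequence, each $\varphi(f)$ commutes with $p$, so $f\mapsto p\varphi(f)p=\varphi(f)p$ is a unital \hm\ into the corner $pA_\I p$, which we identify with $(pAp)_\I$; its image lies in $(pAp)'$ because $\varphi(f)$ already commutes with $pAp\subseteq A$, and it is continuous and equivariant because $p$ is $\alpha$-invariant, so that $\beta_\I$ is the restriction of $\alpha_\I$.

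The hard part is (4). The obstruction is that the canonical \hm\ $(A_n)_\I\to A_\I$ induced by the map $\iota_{n,\I}\colon A_n\to A$ into the limit sends a sequence central for $A_n$ only into the commutant of $\iota_{n,\I}(A_n)$, not of all of $A=\ov{\bigcup_n\iota_{n,\I}(A_n)}$. I would restrict to the cofinal set of indices at which $\alpha^{(n)}$ has the Rokhlin property, lift each witnessing \hm\ to a sequence of unital \hms\ $C(G)\to A_n$ that are approximately central on an exhaustion $F_1\subseteq F_2\subseteq\cdots$ of $A$ and approximately equivariant with tolerances $\ep_n\to 0$, push them into $A$, and diagonalize to assemble a single unital \hm\ $\varphi\colon C(G)\to A_\I$ with central, invariant image. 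The delicate point, exactly as in the circle computation preceding this theorem, is that the equivariance estimate must be made \emph{uniform} in $g\in G$ before one can diagonalize; I would secure this by working inside $A_{\I,\alpha}$ and invoking the Dini-type argument used above to upgrade pointwise to uniform convergence of the relevant orbit maps. Alternatively, since unital order-zero maps are \hms, the Rokhlin property coincides with Rokhlin dimension zero, and (4) is exactly the zero-dimensional case of the direct-limit permanence for Rokhlin dimension; invoking that result (Theorem~3.8 of \cite{Gar_rokhlin_2017}) would let me avoid reproducing the diagonal argument.
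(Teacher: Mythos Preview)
The paper does not give a direct argument at all: it simply observes that the Rokhlin property coincides with Rokhlin dimension zero (since unital order-zero maps are homomorphisms) and then cites Theorem~3.8 of \cite{Gar_rokhlin_2017} and Proposition~5.1 of \cite{Gar_regularity_2014} for all four parts. Your alternative at the end of (4) is therefore precisely the paper's proof, for every item.

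Your direct arguments for (1)--(3) are correct and more self-contained than the citation; nothing to add there.

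Your sketch of (4), however, has two confusions worth flagging. First, you cannot ``lift each witnessing homomorphism to a sequence of unital \emph{homomorphisms} $C(G)\to A_n$''; the Rokhlin property for $\alpha^{(n)}$ only yields a homomorphism into the quotient $(A_n)_{\I,\alpha^{(n)}}\cap A_n'$, and this lifts to a sequence of unital \emph{completely positive} maps $C(G)\to A_n$ that are merely approximately multiplicative (cf.\ the remark preceding \autoref{df:StrongRp}). This is harmless for the diagonal argument---approximate multiplicativity with tolerances tending to zero still yields a genuine homomorphism into $A_\I$---but it is not what you wrote. Second, and more importantly, your worry about uniformity in $g$ and the Dini argument is unnecessary: by that same remark, the approximate lifts can be taken \emph{exactly} equivariant (one simply averages over $G$), so the assembled map automatically lands in $A_{\I,\alpha}$ without any pointwise-to-uniform upgrade. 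With these two corrections your diagonal argument goes through cleanly, and is in fact the content of the cited permanence theorem.
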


It is not in general the case that the Rokhlin property for compact group actions is preserved by restricting
to a closed subgroup. The reader is referred to Section~3.1 in \cite{Gar_rokhlin_2017} for a
discussion about the interaction between Rokhlin dimension and restriction to
closed subgroups.

\subsection{Examples and non-existence}

Compact group actions with the Rokhlin property are rare (and they seem to be even less
common if the group is connected). In a forthcoming paper (\cite{Gar_automatic_2017}), we will show that there are many
\ca s of interest that do not admit any non-trivial compact group action with the Rokhlin
property (such as the Cuntz algebra $\mathcal{O}_\I$ and the Jiang-Su algebra $\mathcal{Z}$;
see \cite{HirPhi_rokhlin_2015} for a stronger statement valid for compact Lie groups), while there are many \ca s
that only admit actions with the Rokhlin property of \emph{totally disconnected} compact
groups, such as
the Cuntz algebras $\mathcal{O}_n$ for $n\geq 3$, UHF-algebras, or even AI-algebras.
See \cite{Gar_circle_2014} and \cite{Gar_classificationI_2014} for some non-existence results of circle actions
with the \Rp.

\vspace{0.3cm}

In this section, we shall construct a family of examples of compact group actions with the Rokhlin property
on certain simple AH-algebras of no dimension growth, and on certain Kirchberg algebras,
including $\mathcal{O}_2$. We also show
that compact group actions on $\Ot$-absorbing \ca s are generic, in a suitable sense; see
\autoref{Rokhlin are generic on D-absorbing algs}. The next one is the canonical example of 
a Rokhlin action.

\begin{eg}\label{eg:Lt}
Given a second-countable compact group $G$, the action
$\texttt{Lt}\colon G\to \Aut(C(G))$ has the \Rp, essentially by definition.\end{eg}

For the theory to be applicable in cases of interest, we must exhibit Rokhlin actions on \ca s that fit within
the classification program of Elliott. Our next three examples do this. 

\begin{eg}\label{eg:RpAH}
Let $G$ be a second-countable compact group. For $n\in\N$, set $A_n=C(G)\otimes M_{2^n}$. Set
$\alpha^{(n)}=\texttt{Lt}\otimes\id_{M_{2^n}}\colon G\to\Aut(A_n)$. Then $\alpha^{(n)}$ has the
Rokhlin property by part (1) of \autoref{thm: permanence properties} and \autoref{eg:Lt}.
Fix a countable subset $X=\{x_1,x_2,x_3,\ldots\}$ of
$G$ such that $\{x_m,x_{m+1},\ldots\}$ is dense in $G$ for all $m\in\N$.
Given $n\in\N$, define a map $\iota_n\colon A_n\to
A_{n+1}$ by
$$\iota_n(f)=\left(
               \begin{array}{cc}
                 f & 0 \\
                 0 & \texttt{Lt}_{x_n}(f) \\
               \end{array}
             \right)$$
for every $f$ in $A_n$. Then $\iota_n$ is unital and injective. The direct limit
$A=\dirlim(A_n,\iota_n)$ is clearly a unital AH-algebra of no dimension growth, and it is simple by
Proposition~2.1 in \cite{DadNagNemPas_reduction_1992}.
It is easy to check that
$$\iota_n\circ\alpha^{(n)}_g=\alpha^{(n+1)}_g\circ\iota_n$$
for all $n\in\N$ and all $g\in G$, and hence
$\left(\alpha^{(n)}\right)_{n\in\N}$ induces a direct limit action
$\alpha=\varinjlim \alpha^{(n)}$ of $G$ on $A$. Then $\alpha$ has the
Rokhlin property by part (3) of \autoref{thm: permanence properties}.

The (graded) $K$-theory of $A$ is easily seen to be
$K_\ast(A)\cong K_\ast(C(G))\otimes\Z\left[\frac{1}{2}\right]$.
Additionally, $A^\alpha$ is isomorphic to the CAR algebra, and the inclusion
$A^\alpha\to A$, at the level of $K_0$, induces the canonical embedding
$\Z\left[\frac{1}{2}\right]\to K_0(C(G))\otimes\Z\left[\frac{1}{2}\right]$ as the
second tensor factor.  \end{eg}

In \autoref{eg:RpAH}, the $2^\I$ UHF pattern can be replaced by any other UHF or (simple)
AF pattern, and the resulting \ca\ is also a (simple) AH-algebra with no dimension growth.
If the group is totally disconnected, the direct limit algebra will be an AF-algebra.
For non-trivial groups, these AF-algebras will nevertheless not be UHF-algebras, even if a
UHF pattern is followed.

\begin{eg}\label{eg:onPI}
Given a second-countable compact group $G$, let $A$ and $\alpha$ be as in \autoref{eg:RpAH}.
Then
\[\alpha\otimes\id_{\OI}\colon G\to \Aut(A\otimes\OI)\]
has the \Rp\ by part (1) of \autoref{thm: permanence properties}, and $A\otimes\OI$ is a Kirchberg
algebra. One can obtain actions of $G$ on other Kirchberg algebras by following a different
UHF or AF pattern in \autoref{eg:RpAH}.\end{eg}

\begin{eg}\label{eg:RpOt}
Let $G$ be a second-countable compact group, and let $A$ and $\alpha$ be as in \autoref{eg:RpAH}.
Use Theorem~3.8 in \cite{KirPhi_embedding_2000} to choose an isomorphism $\varphi\colon A\otimes \Ot\to \Ot$, 
and define an action $\beta\colon G\to\Aut(\Ot)$ by $\beta_g=\varphi\circ (\alpha_g\otimes \id_{\Ot})\circ \varphi^{-1}$ for $g\in G$.
Then $\beta$ has the \Rp\ by part (1) of \autoref{thm: permanence properties}.

More generally, the action constructed in \autoref{eg:RpAH} can be used to construct an action of $G$ on
any $\mathcal{O}_2$-absorbing \ca. \end{eg}

In contrast, it follows from the following proposition that only finite groups act with the Rokhlin
property on finite dimensional \ca s (and, in this case, the action must be a permutation of the simple
summands). The result is not surprising, but our proof allows us to
show that the result is true even under the much weaker assumption that the action be pointwise
outer. In particular, by Theorem~4.14 in \cite{Gar_rokhlin_2017}, this applies to compact group actions with
finite Rokhlin dimension.

\begin{prop}
Let $G$ be a compact group, let $A$ be a finite dimensional \ca, and let $\alpha\colon G\to\Aut(A)$
be a continuous action. If $\alpha_g$ is outer for all $g\in G\setminus\{1\}$, then $G$ must be finite.
\end{prop}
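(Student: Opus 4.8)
The plan is to reduce the statement to an elementary fact about finite groups by exploiting the structure of $\Aut(A)$ for finite-dimensional $A$. Write $A\cong\bigoplus_{i=1}^k M_{n_i}$ as a direct sum of matrix algebras. First I would recall the well-known description of $\Aut(A)$: any automorphism permutes the minimal central projections, hence induces a permutation of the summands, and this permutation can only interchange summands of equal matrix size. Automorphisms inducing the \emph{trivial} permutation fix each $M_{n_i}$ globally, and since every automorphism of $M_{n_i}$ is inner, such an automorphism is of the form $\Ad(u)$ for some unitary $u\in A$; conversely every inner automorphism fixes each summand. Thus $\Inn(A)$ is precisely the subgroup of summand-fixing automorphisms, and there is a short exact sequence
\[1\to\Inn(A)\to\Aut(A)\xrightarrow{\ q\ } S\to 1,\]
where $S$ is a subgroup of the symmetric group on $\{1,\ldots,k\}$, and in particular $S$ is \emph{finite}.

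Next I would translate the hypothesis. Since $\alpha_1=\id=\Ad(1)$ is inner, the assumption that $\alpha_g$ is outer for every $g\in G\setminus\{1\}$ says exactly that $\alpha_g\in\Inn(A)$ holds only for $g=1$. Equivalently, the composite group homomorphism $q\circ\alpha\colon G\to S$ has trivial kernel. A homomorphism with trivial kernel is injective, so $G\cong\im(q\circ\alpha)$ is isomorphic to a subgroup of the finite group $S$; hence $G$ is finite.

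The only nontrivial input is the structural fact that $\Out(A)=\Aut(A)/\Inn(A)$ is finite when $A$ is finite-dimensional, so I expect this to be the ``main obstacle,'' though it is entirely standard. Given it, the remaining argument is a one-line quotient-and-kernel computation. I would emphasize that the argument uses neither the continuity of $\alpha$ nor the full Rokhlin property, but only pointwise outerness of the $\alpha_g$, which is precisely the strengthening advertised before the statement.
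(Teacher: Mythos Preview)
Your argument is correct and considerably more direct than the paper's. The paper proceeds in two stages: first, it invokes a $K$-theoretic fact (that a connected compact group acts trivially on $K_0$, via homotopy invariance) to conclude that the identity component $G_0$ preserves each matrix summand and hence acts by inner automorphisms, forcing $G_0=\{1\}$; second, having reduced to the totally disconnected case, it chooses a strictly decreasing chain of open subgroups with trivial intersection and argues that the associated increasing chain of fixed-point subalgebras must stabilize at $A$ by finite-dimensionality, contradicting outerness. Your approach bypasses both stages by observing directly that $\Out(A)=\Aut(A)/\Inn(A)$ embeds in the symmetric group on the simple summands and is therefore finite, so pointwise outerness already forces $G$ to inject into a finite group. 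This is more elementary (no $K$-theory, no profinite structure theory), and as you point out it uses neither continuity of $\alpha$ nor compactness of $G$. The paper's longer route does have the incidental virtue of exercising tools (rigidity of $K$-theory under connected groups, approximation by finite quotients) that recur elsewhere in the subject, but for the statement at hand your argument is the cleaner one.
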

\begin{proof}
Choose positive integers $m, n_1,\ldots,n_m$ such that $A\cong\bigoplus\limits_{j=1}^m M_{n_j}$.
Denote by $G_0$ the connected component of the identity of $G$. By Proposition~3.9 in \cite{Gar_circle_2014}, the
restriction $\alpha|_{G_0}$ acts trivially on $K$-theory. This is easily seen to be equivalent to
$\alpha_g(M_{n_j})=M_{n_j}$ for all $g\in G_0$ and all $j=1,\ldots,m$.
Given $g\in G_0$, and since every automorphism of a matrix algebra is inner,
it follows that $\alpha_g|_{M_{n_j}}$ is inner
for all $j=1,\ldots,m$. Hence $\alpha_g$ is inner, and we conclude that $G_0=\{1\}$. In other
words, $G$ is totally disconnected.

If $G$ is infinite, then there is a strictly decreasing sequence
\[G\supseteq H_1\subseteq H_2\supseteq\cdots \supseteq \{1\}\]
of infinite closed subgroups of $G$ with finite index. Therefore there is an increasing sequence of inclusions
\[A^G\subseteq A^{H_1}\subseteq \cdots \subseteq A^{\{1\}}=A.\]
Since $A$ is finite dimensional, this sequence must stabilize, and hence there exists an infinite subgroup $H$
of $G$ such that $A^H=A$. We conclude that $\alpha_g=\id_A$ for all $g\in H$. This is a contradiction, and
hence $G$ is finite.
\end{proof}

The following technical definition will be needed in the next subsection.

\begin{df} \label{df:StrongRp}
Let $A$ be a \uca, let $G$ be a second-countable compact group, and let $\alpha\colon G\to\Aut(A)$ be a continuous
action. We say that $\alpha$ has the \emph{strong Rokhlin property} if there exists a unital equivariant homomorphism
\[\varphi\colon (C(G),\texttt{Lt})\to (A_{\I,\alpha}\cap A',\alpha_\I)\]
which can be lifted to an equivariant homomorphism $\varphi\colon (C(G),\texttt{Lt})\to (\ell^\I_\alpha(A),\alpha^{\I})$.
\end{df}

The difference between the Rokhlin property (\autoref{df:Rp}) and the strong Rokhlin property, is that in
\autoref{df:Rp}, the homomorphism $\varphi$ is not assumed to be liftable to a homomorphism into $\ell^\I_\alpha(A)$. 
The following remark highlights this difference in terms of elements in the coefficient algebra $A$:

\begin{rem} Let $\alpha\colon G\to\Aut(A)$ be an action of a second-countable compact group on a unital
\ca\ $A$. The following equivalences
are easy to check: 
\be
\item[(a)] $\alpha$ has the Rokhlin property if and only if for every finite subset $F\subseteq A$,
for every finite subset $S\subseteq C(G)$, and for every $\ep>0$, there exists a unital equivariant
\emph{completely positive contractive} map $\varphi\colon C(G)\to A$ such that
$\|\psi(f)a-a\psi(f)\|<\ep$ and $\|\psi(fh)-\psi(f)\psi(h)\|<\ep$
for all $f,h\in S$ and for all $a\in F$.
\item[(b)] $\alpha$
has the strong Rokhlin property
if and only if for every finite subset $F\subseteq A$,
for every finite subset $S\subseteq C(G)$, and for every $\ep>0$, there exists a unital equivariant
\emph{homomorphism} $\psi\colon C(G)\to A$ such that
$\|\psi(f)a-a\psi(f)\|<\ep$
for all $f\in S$ and for all $a\in F$.
\ee
(The difference between both conditions is that, while $\psi$ is only assumed to be \emph{almost} multiplicative
in part (a), it is a homomorphism in part (b).)
\end{rem}

By Proposition~3.3 in \cite{Gar_classificationI_2014}, any action of the circle with the Rokhlin
property has the strong
Rokhlin property. More generally, it will
follow from
\autoref{thm:C(G)eqsj} that the Rokhlin property is equivalent to the strong Rokhlin property for (abelian) compact
Lie groups with $\dim(G)\leq 1$. We do not have an example of an action with the Rokhlin property that does not
have the strong Rokhlin property, although we suspect it exists. The following will be needed later.

\begin{lma}\label{lma:OtStrRp}
Let $G$ be a second-countable compact group.
Then there exists a continuous
action $\alpha\colon G\to\Aut(\Ot)$ with the strong Rokhlin property.
\end{lma}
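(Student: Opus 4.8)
The plan is to realise the action as an infinite tensor product of the $\Ot$-action of \autoref{eg:RpOt}, using $\Ot\cong\Ot^{\otimes\infty}$ so that a genuine equivariant copy of $C(G)$ sitting in a far-out tensor factor becomes automatically approximately central. I shall verify the strong Rokhlin property through the reformulation in part~(b) of the remark following \autoref{df:StrongRp}: it suffices to produce, for every finite $F\subseteq\Ot$, every finite $S\subseteq C(G)$ and every $\ep>0$, a unital equivariant \emph{homomorphism} $\psi\colon C(G)\to\Ot$ with $\|\psi(f)a-a\psi(f)\|<\ep$ for all $f\in S$ and $a\in F$.

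First I would record a genuine unital equivariant homomorphism $\mu\colon(C(G),\texttt{Lt})\to(\Ot,\beta)$, where $(\Ot,\beta)$ and the identification $\varphi\colon A\otimes\Ot\to\Ot$ are as in \autoref{eg:RpOt}. Namely, one composes the unital equivariant inclusion $C(G)\to A_1=C(G)\otimes M_2$, $f\mapsto f\otimes 1_{M_2}$, with the canonical map $A_1\to A$, with $a\mapsto a\otimes 1_{\Ot}$, and finally with $\varphi$. Each of these intertwines the relevant left-translation and tensor actions (using $\iota_n\circ\alpha^{(n)}_g=\alpha^{(n+1)}_g\circ\iota_n$ from \autoref{eg:RpAH} and $\beta_g=\varphi\circ(\alpha_g\otimes\id)\circ\varphi^{-1}$), so the composite $\mu$ is unital and equivariant.

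Next I would form the infinite minimal tensor product $\bigl(\Ot^{\otimes\infty},\bigotimes_{n\in\N}\beta\bigr)$ and identify $\Ot^{\otimes\infty}\cong\Ot$ ($\Ot$ being strongly self-absorbing), letting $\gamma$ denote the resulting action; its continuity follows from that of $\beta$ by a routine $\ep/3$-argument on elementary tensors. For each $m\in\N$ define $\psi_m\colon C(G)\to\Ot^{\otimes\infty}$ by placing $\mu$ in the $m$-th factor and $1$ in every other factor. Then $\bigl(\bigotimes_n\beta\bigr)_g\circ\psi_m=\psi_m\circ\texttt{Lt}_g$, since $\mu$ is $\texttt{Lt}$-$\beta$-equivariant, so each $\psi_m$ is a unital equivariant homomorphism. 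Given $F,S,\ep$ as above, I approximate the finitely many elements of $F$ by elementary tensors supported on the first $N$ factors; for any $m>N$ the image $\psi_m(C(G))$ lives in the $m$-th factor and commutes with these approximants, so a standard perturbation estimate (controlling the error by $\sup_{f\in S}\|f\|$) gives $\|\psi_m(f)a-a\psi_m(f)\|<\ep$ for all $f\in S$, $a\in F$. This verifies criterion~(b), hence the strong Rokhlin property of $\gamma$.

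The one genuinely delicate point is seeing that the \emph{strong} (liftable) Rokhlin property, and not merely the ordinary one, drops out here: a single equivariant copy of $C(G)$ inside $\Ot$ can never be made central, so the entire force of the argument is the ``shift to infinity'' supplied by the tensor factors, which lets honest homomorphisms rather than approximately multiplicative maps absorb the centrality requirement. The remaining ingredients, namely continuity of the infinite tensor action and $\Ot^{\otimes\infty}\cong\Ot$, are standard.
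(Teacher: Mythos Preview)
Your argument is correct, but it takes a more roundabout route than the paper. The paper simply observes that the AH-algebra $A$ of \autoref{eg:RpAH} already has the \emph{strong} Rokhlin property: at each stage $A_n=C(G)\otimes M_{2^n}$, the map $f\mapsto f\otimes 1_{M_{2^n}}$ is a unital equivariant homomorphism whose image is \emph{central} in $A_n$ (because $C(G)$ is commutative), and since $\bigcup_n A_n$ is dense in $A$ these maps are automatically approximately central in the limit. One then transfers this to $\Ot$ via $A\otimes\Ot\cong\Ot$ (Kirchberg--Phillips), exactly as in \autoref{eg:RpOt}. So the paper never needs an infinite tensor product: the ``shift to infinity'' that you engineer with $\Ot^{\otimes\infty}$ is already built into the direct-limit structure of $A$.

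Your approach, by contrast, first passes through \autoref{eg:RpOt} to extract a single (non-central) equivariant copy $\mu\colon C(G)\to(\Ot,\beta)$, and then manufactures centrality by placing $\mu$ in far-out tensor factors of $\Ot^{\otimes\infty}\cong\Ot$. This works, and it isolates a reusable principle: whenever a $G$-algebra $D$ admits \emph{any} unital equivariant embedding of $C(G)$ and satisfies $D^{\otimes\infty}\cong D$, the diagonal action on $D^{\otimes\infty}$ has the strong Rokhlin property. The cost is that you invoke strong self-absorption of $\Ot$ and continuity of the infinite tensor action, and you pass through $\Ot$ twice (once to build $\beta$, once more for the tensor power), whereas the paper's argument stays entirely inside the AH-algebra until the final step.
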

\begin{proof}
By Theorem~3.8 in \cite{KirPhi_embedding_2000}, it is enough to construct an action of $G$ on a simple,
unital, separable, nuclear \ca, with the strong Rokhlin property. It is straightforward
to check that the action constructed in \autoref{eg:RpAH} satisfies the desired condition. We omit
the details.
\end{proof}

\subsection{The Rokhlin property is generic.}
In this subsection, we study genericity of Rokhlin actions.
We show that if $A$ is a separable, unital, UHF-absorbing \ca,
and if $G$ is a totally disconnected compact group, then
then $G$-actions on $A$ with the Rokhlin property are generic.
For an arbitrary compact group, a similar result is obtained for $\Ot$-absorbing
\ca s; see \autoref{cor:generic}.

Throughout, $A$ will be a separable, unital \ca, and $G$ will be a second-countable compact group.

\begin{df} Given an enumeration $X=\{a_1,a_2,\ldots\}$ of a countable dense subset of the unit ball of $A$, define
\[\rho_X^{(0)}(\alpha,\beta)=\sum\limits_{k=1}^\I \frac{\|\alpha(a_k)-\beta(a_k)\|}{2^k}\]
and
\[\rho_X(\alpha,\beta)=\rho_X^{(0)}(\alpha,\beta)+\rho_X^{(0)}(\alpha^{-1},\beta^{-1})\]
for $\alpha,\beta\in \Aut(A)$.\end{df}

Denote by $\mbox{Act}_G(A)$ the set of all continuous actions of $G$ on $A$, and define
\[\rho_{G,S}(\alpha,\beta)=\max_{g\in G} \rho_X(\alpha_g,\beta_g),\]
for $\alpha,\beta\in \mbox{Act}_G(A)$. We record the following standard fact.

\begin{lma}\label{space of T-actions is complete}
For any enumeration $X$ as above,
the function $\rho_{G,X}$ is a complete metric on $\mbox{Act}_G(A)$.\end{lma}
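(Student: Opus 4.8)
The plan is to verify the metric axioms for $\rho_X$ on $\Aut(A)$ first, promote them to $\rho_{G,X}$, and then prove completeness, which is the substantive part. First, $\rho_X$ is a metric on $\Aut(A)$: symmetry and the triangle inequality follow termwise from those of the norm, applied separately to the two summands defining $\rho_X$, and the defining series converge because each $a_k$ lies in the unit ball and automorphisms are isometric, so every summand is bounded by $2\cdot 2^{-k}$. For definiteness, if $\rho_X(\alpha,\beta)=0$ then $\alpha(a_k)=\beta(a_k)$ for all $k$; since $\{a_k\}$ is dense in the unit ball and $\alpha,\beta$ are isometric, $\alpha=\beta$. Next I would check that $\rho_{G,X}$ is well defined: for fixed $\alpha,\beta$, the function $g\mapsto\rho_X(\alpha_g,\beta_g)$ is continuous, since each $g\mapsto\|\alpha_g(a_k)-\beta_g(a_k)\|$ is continuous by point-norm continuity of the actions, the defining series converges uniformly in $g$ by the Weierstrass $M$-test, and the inverse term is handled using continuity of $g\mapsto g^{-1}$ together with $(\alpha_g)^{-1}=\alpha_{g^{-1}}$. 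Compactness of $G$ then guarantees the maximum is attained, and the metric axioms for $\rho_{G,X}$ are inherited from $\rho_X$ by the usual argument for a supremum of metrics.

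For completeness, let $(\alpha^{(m)})_{m\in\N}$ be $\rho_{G,X}$-Cauchy. Since
\[2^{-k}\|\alpha^{(m)}_g(a_k)-\alpha^{(l)}_g(a_k)\|\le\rho_X(\alpha^{(m)}_g,\alpha^{(l)}_g)\le\rho_{G,X}(\alpha^{(m)},\alpha^{(l)}),\]
each sequence $(\alpha^{(m)}_g(a_k))_m$ is Cauchy in $A$; using that every $\alpha^{(m)}_g$ is isometric, a standard $3\varepsilon$-estimate extends this to show $(\alpha^{(m)}_g(a))_m$ is Cauchy for every $a\in A$. Define $\alpha_g(a)=\lim_m\alpha^{(m)}_g(a)$. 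As a point-norm limit of isometric $*$-homomorphisms, $\alpha_g$ is an isometric $*$-endomorphism of $A$.

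The hard part will be surjectivity of $\alpha_g$; this is exactly where the inverse term in $\rho_X$ is used. The same estimate applied to $(\alpha^{(m)}_g)^{-1}=\alpha^{(m)}_{g^{-1}}$ produces an isometric $*$-endomorphism $\gamma_g=\lim_m(\alpha^{(m)}_g)^{-1}$. To see $\alpha_g\gamma_g=\id$, fix $a$ and set $b_m=(\alpha^{(m)}_g)^{-1}(a)$, so that $\alpha^{(m)}_g(b_m)=a$ and $b_m\to\gamma_g(a)$; then
\[\|\alpha_g(\gamma_g(a))-a\|\le\|\gamma_g(a)-b_m\|+\|\alpha_g(b_m)-\alpha^{(m)}_g(b_m)\|,\]
and the second term is controlled by splitting $b_m=\gamma_g(a)+(b_m-\gamma_g(a))$ and using isometry on the vanishing difference, so both terms tend to $0$. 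Hence $\alpha_g\gamma_g=\id$, and symmetrically $\gamma_g\alpha_g=\id$, so $\alpha_g\in\Aut(A)$.

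Finally I would check that $\alpha\in\mbox{Act}_G(A)$ and that $\alpha^{(m)}\to\alpha$. The homomorphism property $\alpha_{gh}=\alpha_g\alpha_h$ follows by passing to the limit in $\alpha^{(m)}_{gh}=\alpha^{(m)}_g\alpha^{(m)}_h$, using the same varying-argument trick to handle the composition. Letting $l\to\infty$ in the Cauchy estimate, and using dominated convergence for the series (valid since $\alpha^{(l)}_{g^{-1}}\to\alpha_{g^{-1}}$ point-norm), gives $\sup_{g}\rho_X(\alpha^{(m)}_g,\alpha_g)\to0$; in particular $2^{-k}\|\alpha^{(m)}_g(a_k)-\alpha_g(a_k)\|\to0$ uniformly in $g$, so each $g\mapsto\alpha_g(a_k)$ is a uniform limit of continuous functions and hence continuous, and an isometry approximation argument upgrades this to continuity of $g\mapsto\alpha_g(a)$ for all $a$, giving $\alpha\in\mbox{Act}_G(A)$. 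The same uniform bound, together with attainment of the max on the compact group $G$, then yields $\rho_{G,X}(\alpha^{(m)},\alpha)\to0$.
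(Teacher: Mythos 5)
Your proof is correct, but there is nothing in the paper to compare it against: the author states this lemma as ``a standard fact'' and omits the proof entirely, so your write-up is in effect supplying the missing argument. What you give is the standard one, and all the key points are handled properly: definiteness of $\rho_X$ via density and isometry of automorphisms, continuity of $g\mapsto\rho_X(\alpha_g,\beta_g)$ so that the maximum over the compact group is attained, the $3\varepsilon$/varying-argument estimates to pass limits through compositions, and the Fatou-type passage $l\to\infty$ in the Cauchy estimate to get $\sup_g\rho_X(\alpha^{(m)}_g,\alpha_g)\to 0$ \emph{before} knowing $\alpha$ is continuous, which is then used to upgrade pointwise to uniform convergence and deduce continuity of $g\mapsto\alpha_g(a)$. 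One small conceptual remark: your claim that surjectivity of $\alpha_g$ is ``exactly where the inverse term in $\rho_X$ is used'' is slightly off in this setting. Since inversion is a bijection of $G$, one has
\[
\max_{g\in G}\rho_X^{(0)}\bigl(\alpha_{g^{-1}},\beta_{g^{-1}}\bigr)=\max_{g\in G}\rho_X^{(0)}(\alpha_g,\beta_g),
\]
so on $\mbox{Act}_G(A)$ the metric $\rho_{G,X}$ is uniformly equivalent to $\max_g\rho_X^{(0)}(\alpha_g,\beta_g)$; convergence of $\bigl(\alpha^{(m)}_{g}\bigr)^{-1}=\alpha^{(m)}_{g^{-1}}$ already follows from the forward part of the metric evaluated at $g^{-1}$. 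The inverse term is genuinely needed to make $\rho_X$ a complete metric on $\Aut(A)$ itself (a point-norm limit of automorphisms need only be an isometric endomorphism), but for actions of a group the uniformity over $G$ does that job for you. Your use of it is of course still valid.
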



\begin{nota} Given a finite subset $F\subseteq A$, given a finite subset $S\subseteq C(G)$, and given $\varepsilon>0$,
we let $W_G(F,S,\varepsilon)$ denote the set of all actions $\alpha\in\mbox{Act}_G(A)$ such that there exists
a unital completely positive linear map $\varphi\colon C(G)\to A$ satisfying
\be\item $\|\varphi(f)a-a\varphi(f)\|<\varepsilon$ for all $a\in F$ and for all $f\in S$;
\item $\|\varphi(f_1f_2)-\varphi(f_1)(\varphi(f_2)\|<\ep$ for all $f_1,f_2\in S$; and
\item $\|\alpha_g(\varphi(f))-\varphi(\texttt{Lt}_g(f))\|<\varepsilon$ for all $g\in G$ and for all $f\in S$.\ee\end{nota}

It is easy to check that an action $\alpha\in \mbox{Act}_G(A)$ has the \Rp\ \ifo it belongs to $W_G(F,S,\varepsilon)$ for all finite subsets
$F\subseteq A$ and $S\subseteq C(G)$, and for all positive numbers $\varepsilon>0$.

If $Z$ is a set, we denote by $\mathcal{P}_f(Z)$ the set of all finite subsets of $Z$. Note that $|\mathcal{P}_f(Z)|=|Z|$ if $Z$ is
infinite.

\begin{lma}\label{lma} Let $X$ be a countable dense subset of the unit ball of $A$, and let $Y$ be a countable
dense subset of the unit ball of $C(G)$.
Then $\alpha\in\mbox{Act}_G(A)$ has the \Rp\ \ifo it belongs to the countable intersection
$$\bigcap_{F\in \mathcal{P}_f(X)}\bigcap_{S\in \mathcal{P}_f(Y)}\bigcap_{n=1}^\I W_G\left(F,S,\frac{1}{n}\right).$$\end{lma}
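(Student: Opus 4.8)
The plan is to prove the two implications separately, but the essential content lies entirely in showing that membership in $W_G(F,S,\ep)$ for the full (uncountable) collection of $F,S,\ep$ is equivalent to membership for the restricted collection where $F$ ranges over $\mathcal{P}_f(X)$, $S$ over $\mathcal{P}_f(Y)$, and $\ep=1/n$. The forward implication is trivial: if $\alpha$ has the Rokhlin property, then by the remark preceding the notation, $\alpha\in W_G(F,S,\ep)$ for \emph{all} finite $F\subseteq A$, all finite $S\subseteq C(G)$, and all $\ep>0$, so in particular for all the special ones indexed by $X$, $Y$, and $1/n$; hence $\alpha$ lies in the countable intersection.

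For the converse, suppose $\alpha$ belongs to the countable intersection, and fix an arbitrary finite $F_0\subseteq A$, an arbitrary finite $S_0\subseteq C(G)$, and an arbitrary $\ep>0$; I must produce a unital completely positive $\varphi\colon C(G)\to A$ satisfying conditions (1)--(3) of the notation for these data. The idea is a standard approximation-and-perturbation argument. First I would choose $n\in\N$ with $1/n<\ep/3$ (the precise constant to be adjusted at the end). Then, using density of $X$ in the unit ball of $A$ and of $Y$ in the unit ball of $C(G)$, I would pick finite sets $F\in\mathcal{P}_f(X)$ and $S\in\mathcal{P}_f(Y)$ whose elements approximate the (rescaled) elements of $F_0$ and $S_0$ to within a small tolerance $\dt$. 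Since $\alpha$ lies in $W_G(F,S,\tfrac1n)$, there is a u.c.p.\ map $\varphi$ satisfying (1)--(3) with the approximating sets and tolerance $1/n$. The final step is to transfer these estimates back to $F_0$ and $S_0$: each of the three conditions involves $\varphi(f)$ or $\varphi(f_1f_2)$, and replacing $f\in S$ by a nearby $h\in S_0$ changes $\|\varphi(f)-\varphi(h)\|$ by at most $\|f-h\|$ (as $\varphi$ is contractive), while replacing $a\in F$ by nearby $b\in F_0$ changes commutators by a controlled amount; similarly $\texttt{Lt}_g$ is isometric, so (3) is stable under these perturbations.

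The one genuinely nontrivial point is condition (2), the almost-multiplicativity estimate, because it involves the \emph{product} $f_1f_2$ inside $C(G)$: approximating $f_1,f_2\in S_0$ by elements of $Y$ does not immediately give that $f_1f_2$ is approximated by a product of elements of $Y$. I would handle this by arranging $S$ to contain approximants not only to the elements of $S_0$ but also to all their pairwise products $\{f_if_j\}$, which is still a finite subset of the unit ball of $C(G)$ (after rescaling, since the unit ball is closed under products), so density of $Y$ applies. Keeping careful track of the finitely many $\dt$-perturbations and their accumulation, and choosing $\dt$ small enough and $n$ large enough at the outset, yields all three estimates below $\ep$. This is the step I expect to require the most bookkeeping, though no conceptual difficulty; it is entirely routine once the product terms are included in $S$. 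Since the intersection runs over the countable families $\mathcal{P}_f(X)$, $\mathcal{P}_f(Y)$, and $n\in\N$, it is a countable intersection, completing the claim.
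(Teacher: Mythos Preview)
Your proposal is correct and follows essentially the same approximation argument the paper sketches (and then omits). One small remark: your concern about condition~(2) is misplaced. If $g_1,g_2\in Y$ approximate $h_1,h_2\in S_0$ in the unit ball, then $g_1g_2$ automatically approximates $h_1h_2$ by continuity of multiplication, and since $\varphi$ is contractive this passes through $\varphi$; so there is no need to enlarge $S$ to contain approximants to the pairwise products. The straightforward triangle-inequality perturbation already handles all three conditions uniformly.
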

\begin{proof} One just needs to approximate any finite subset of $A$ by scalar multiples of elements in a finite subset of $X$,
and likewise for finite subsets of $C(G)$. (We are implicitly using that both $A$ and $C(G)$ are separable.)
We omit the details.\end{proof}

Since the following proposition is a bit technical, we give a sketch of its proof in a simplified setting. Adopt the notation and
assumptions of the proposition, and for the sake of the
argument, suppose that there exists an isomorphism $\theta\colon A\otimes \D \to A$ such that $a\mapsto (\theta(a\otimes 1_{\D})$ 
is unitarily equivalent to $\id_A$, and let $w\in \U(A)$ be an implementing unitary. Set $\rho=\Ad(w)\circ\theta$. One can check that
if $\alpha\in \mathrm{Act}_G(A)$, then 
\[\alpha_g=\rho\circ(\alpha_g\circ\gamma_g)\circ \rho^{-1}\]
for all $g\in G$. Since $\gamma$ has the Rokhlin property, so does $\alpha\otimes\gamma$, and by the above also $\alpha$. We deduce
that $\alpha$ belongs to $W_G(F,S,\varepsilon)$ for all $F,S,\varepsilon$. In general, when $a\mapsto\theta(a\otimes 1_{\D})$ is
\emph{approximately} unitarily equivalent to $\id_A$, one can only approximate $\alpha$ by elements in $W_G(F,S,\varepsilon)$,
showing that this set is dense.

For use in the following proposition, we denote by $d\colon G\times G\to\R$ a (left) invariant metric on $G$.

\begin{prop}\label{W(F,epsilon) is dense} Let $A$ and $\D$ be unital, separable \ca s, such that there is an action
$\gamma\colon G\to\Aut(\D)$ with the strong \Rp. Suppose that there exists an isomorphism $\theta\colon A\otimes \D\to A$
such that $a\mapsto \theta(a\otimes 1_\D)$ is approximately unitarily equivalent to $\id_A$. Then for every finite subset
$F\subseteq A$ and every $\varepsilon>0$, the set $W_G(F,S,\varepsilon)$ is open and dense in $\mbox{Act}_G(A)$.\end{prop}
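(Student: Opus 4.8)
The plan is to prove the two assertions—openness and density—separately, since they rely on quite different mechanisms.

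\medskip

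\textbf{Openness.} The set $W_G(F,S,\varepsilon)$ is defined by the existence of a unital completely positive map $\varphi\colon C(G)\to A$ satisfying three \emph{strict} inequalities. First I would observe that conditions (1) and (2) involve only the single element $\alpha$ through nothing at all—they are conditions on $\varphi$ alone—while condition (3) is the only place where $\alpha$ enters, via $\|\alpha_g(\varphi(f))-\varphi(\texttt{Lt}_g(f))\|<\varepsilon$. So if $\alpha\in W_G(F,S,\varepsilon)$ is witnessed by some $\varphi$, there is a slack: the suprema over the compact index sets $g\in G$, $f\in S$, $a\in F$ are all strictly below $\varepsilon$, say below $\varepsilon-\delta$ for some $\delta>0$. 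It then suffices to show that if $\beta$ is $\rho_{G,X}$-close to $\alpha$, the \emph{same} $\varphi$ witnesses $\beta\in W_G(F,S,\varepsilon)$. For this I need $\|\beta_g(\varphi(f))-\alpha_g(\varphi(f))\|$ to be uniformly small in $g$; the subtlety is that the metric $\rho_{G,X}$ controls $\|\alpha_g(a)-\beta_g(a)\|$ only for $a$ in the countable dense set $X$, whereas $\varphi(f)$ need not lie in $X$. The fix is routine: approximate each $\varphi(f)$ (finitely many elements, since $S$ is finite) by elements of $\spn X$ up to $\delta/3$, and use that automorphisms are isometric to absorb the approximation error uniformly in $g$. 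Here the $\max_{g\in G}$ in the definition of $\rho_{G,X}$ is exactly what is needed to obtain a uniform-in-$g$ bound. Thus a $\rho_{G,X}$-ball of sufficiently small radius around $\alpha$ stays in $W_G(F,S,\varepsilon)$.

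\medskip

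\textbf{Density.} This is where the $\D$-absorption hypothesis does the work, and I would follow the sketch preceding the proposition. Fix $\alpha\in\mbox{Act}_G(A)$ and a tolerance; I want an action in $W_G(F,S,\varepsilon)$ close to $\alpha$. Using the isomorphism $\theta\colon A\otimes\D\to A$, transport the diagonal action $\alpha\otimes\gamma$ to an action $\widetilde\alpha_g=\theta\circ(\alpha_g\otimes\gamma_g)\circ\theta^{-1}$ of $G$ on $A$. Since $\gamma$ has the (strong) Rokhlin property, part (1) of \autoref{thm: permanence properties} gives that $\alpha\otimes\gamma$, hence $\widetilde\alpha$, has the Rokhlin property; in particular $\widetilde\alpha\in W_G(F,S,\varepsilon)$ for every $F,S,\varepsilon$. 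It remains to show $\widetilde\alpha$ is $\rho_{G,X}$-close to $\alpha$. This is precisely the content of the hypothesis that $a\mapsto\theta(a\otimes 1_\D)$ is approximately unitarily equivalent to $\id_A$: choosing a unitary $w\in\U(A)$ implementing this equivalence up to a small tolerance on the finite set $X$-controlled data, one checks that $\Ad(w)\circ\widetilde\alpha_g$ agrees with $\alpha_g$ up to a small error on the relevant elements, uniformly in $g$. Conjugating $\widetilde\alpha$ by $\Ad(w)$ produces a conjugate action still in $W_G(F,S,\varepsilon)$ (conjugacy preserves the Rokhlin property, again by the permanence results, or directly by composing $\varphi$ with $\Ad(w)$) and now $\rho_{G,X}$-close to $\alpha$.

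\medskip

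\textbf{Main obstacle.} I expect the genuine technical work to lie in the density argument, specifically in passing from \emph{exact} unitary equivalence (the clean case in the sketch) to \emph{approximate} unitary equivalence, while keeping all estimates \emph{uniform over $g\in G$}. The single unitary $w$ only implements the equivalence approximately and on a finite set; one must verify that the resulting errors in $\widetilde\alpha_g$, after conjugation, can be made smaller than the prescribed $\rho_{G,X}$-tolerance simultaneously for all $g$, using compactness of $G$ and the continuity/isometry of the actions to convert pointwise-in-$g$ approximations into uniform ones. Apart from this bookkeeping, both halves are otherwise standard perturbation and absorption arguments.
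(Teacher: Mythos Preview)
Your openness argument is exactly the paper's. For density you build the same candidate action as the paper: with $\rho=\Ad(w)\circ\theta$, set $\beta_g=\rho\circ(\alpha_g\otimes\gamma_g)\circ\rho^{-1}$, and verify $\beta$ is close to $\alpha$ via the finite-net-in-$G$ trick you describe (this is precisely the paper's computation). The only genuine difference is how you check $\beta\in W_G(F,S,\varepsilon)$: you observe that $\beta$ is conjugate to $\alpha\otimes\gamma$, which has the Rokhlin property by \autoref{thm: permanence properties}, hence $\beta$ has the Rokhlin property and therefore lies in \emph{every} $W_G(F',S',\varepsilon')$. The paper instead constructs an explicit witness $\psi(f)=\theta(1_A\otimes\varphi(f))$, where $\varphi\colon C(G)\to\D$ is an equivariant \emph{homomorphism} coming from the strong Rokhlin property of $\gamma$, and then checks the three inequalities by hand. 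Your route is shorter and, notably, shows that the ordinary Rokhlin property of $\gamma$ suffices---the ``strong'' hypothesis is not actually needed for this proposition. The paper's route has the virtue of exhibiting the Rokhlin witness concretely, but at the cost of a longer computation and a stronger assumption.
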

\begin{proof} We first check that $W_G(F,S,\varepsilon)$ is open.
Choose an enumeration $X=\{a_1,a_2,\ldots\}$ of a countable
dense subset of the unit ball of $A$, and an enumeration $Y=\{f_1,f_2,\ldots\}$
of a dense subset of the unit ball of $C(G)$.
Let $\alpha$ in $W_G(F,S,\varepsilon)$, and choose a unital completely positive linear map $\varphi\colon C(G)\to A$
satisfying
\be\item $\|\varphi(f)a-a\varphi(f)\|<\varepsilon$ for all $a\in F$ and for all $f\in S$;
\item $\|\varphi(f h)-\varphi(f)(\varphi(h)\|<\ep$ for all $f,h\in S$; and
\item $\|\alpha_g(\varphi(f))-\varphi(\texttt{Lt}_g(f))\|<\varepsilon$ for all $g\in G$ and for all $f\in S$.\ee

Set
\[\varepsilon_0=\max_{g\in G}\max_{f\in S}\left\|\alpha_g(\varphi(f))-\varphi(\texttt{Lt}_g(f))\right\|,\]
so that $\varepsilon_1=\varepsilon-\varepsilon_0$ is positive.
For $f\in S$, let $k_f\in \N$ satisfy $\|a_{k_f}-\varphi(f)\|<\frac{\varepsilon_1}{3}$.
Set $M=\max\limits_{f\in S} k_f$.

We claim that if
$\alpha'\in \mbox{Act}_G(A)$ satisfies $\rho_{G,X}(\alpha',\alpha)<\frac{\varepsilon_1}{2^{M}3}$, then $\alpha'$
belongs to $W_G(F,S,\varepsilon)$.

Let $\alpha'\in \mbox{Act}_G(A)$ satisfy $\rho_{G,X}(\alpha',\alpha)<\frac{\varepsilon_1}{2^{M}3}$, and
let $f\in S$. Then
\begin{align*}\|\alpha'_g(\varphi(f))-\varphi(\texttt{Lt}_g(f))\|&\leq
\|\alpha'_g(\varphi(f))-\alpha_g(\varphi(f))\|+\|\alpha_g(\varphi(f))-\varphi(\texttt{Lt}_g(f))\|\\
&\leq \frac{2\ep_1}{3} +\|\alpha_g'(a_{k_f})-\alpha_g(a_{k_f})\|+ \varepsilon_0\\
&\leq \frac{2\ep_1}{3} + 2^M\rho_{G,X}(\alpha,\alpha')+\varepsilon_0\\
&=\varepsilon_1+\varepsilon_0=\varepsilon.\end{align*}
The claim is proved. It follows that $W_G(F,S,\varepsilon)$ is open.

We will now show that $W_G(F,S,\varepsilon)$ is dense in $\mbox{Act}_G(A)$.
Let $\alpha$ be an arbitrary action in $\mbox{Act}_G(A)$,
let $E\subseteq A$ be a finite subset, and let $\delta>0$.
We want to find $\beta\in\mbox{Act}_G(A)$ such that $\beta$ belongs to $W_G(F,S,\varepsilon)$
and $\|\alpha_g(a)-\beta_g(a)\|<\delta$ for all $g\in G$ and all $a\in E$.

Fix $0<\delta'<\min\{\delta,\varepsilon\}$. Since $G$ is compact and second-countable, it
admits a left-invariant metric, which we denote by $d\colon G\times G\to \R$.
Since $\alpha$ is continuous, there is $\delta_0>0$ such that
whenever $g,g'\in  G$ satisfy $d(g,g')<\delta_0$, then
\[\|\alpha_{g}(a)-\alpha_{g'}(a)\|<\frac{\dt'}{4}\]
for all $a\in E$.
Choose $m\in\N$ and $g_1,\ldots,g_m\in G$, such that for every $g\in G$, there is $j\in\N$, with $1\leq j\leq m$,
satisfying
$d(g,g_j)<\delta_0$. Choose $w\in\U(A)$ with
\[\|w\theta(1_A\otimes a)w^*-a\|<\frac{\dt'}{2}\]
for all
$a\in E\cup \bigcup\limits_{j=1}^m\alpha_{g_j}(E)$. Set $\rho=\Ad(w)\circ\theta$ and define an action
$\beta\in\mbox{Act}_G(A)$ by
\[\beta_g=\rho\circ(\gamma_g\otimes\alpha_g)\circ\rho^{-1}\]
for $g\in G$.

We claim that $\beta$ belongs to $W_G(F,S,\varepsilon)$. Choose $r\in\N$,
$d_1,\ldots,d_r\in \D$, and $x_1,\ldots,x_r\in A$, such that
$w'=\sum\limits_{\ell=1}^rx_\ell\otimes d_\ell$ satisfies $\|w-w'\|<\frac{\dt}{3}$.
Use the strong Rokhlin property of $\gamma$ to find a unital equivariant homomorphism
$\varphi\colon C(G)\to \D$ such that
\[\|\varphi(f)d_\ell-d_\ell \varphi(f)\|<\frac{\ep}{4}\]
for all $f\in S$ and
for all $\ell=1,\ldots,r$. Then
\[\|(1_A\otimes \varphi(f))w'-w'(1_A\otimes \varphi(f))\|<\frac{\dt}{3}\]
for all $f\in S$, and hence $\|(1_A\otimes \varphi(f))w-w(1_A\otimes \varphi(f))\|<\delta$.
Define a unital homomorphism $\psi\colon C(G)\to A$ by
\[\psi(f)=\theta(1_A\otimes \varphi(f))\]
for $f\in C(G)$. Given $g\in G$ and $f\in S$, we have
\begin{align*}&\|\beta_g(\psi(f))- \psi(\texttt{Lt}_g(f))\| \\
&=\left\|w\theta\left((\alpha_g\otimes\gamma_g)(\theta^{-1}(w^*\theta(1_A\otimes \varphi(f))w))\right)w^*-
\theta(1_A\otimes \varphi(\texttt{Lt}_g(f)))\right\|\\
& \leq \left\|w\theta\left((\alpha_g\otimes\gamma_g)(\theta^{-1}(w^*\theta(1_A\otimes \varphi(f))w))\right)w^*-w\theta\left((\alpha_g\otimes\gamma_g)(1_A\otimes \varphi(f))\right)w^*\right\|\\
& \ \ \ +\left\|w\theta\left((\alpha_g\otimes\gamma_g)(1_A\otimes \varphi(f))\right)w^*-
\theta(1_A\otimes \varphi(\texttt{Lt}_g(f)))\right\|\\
&<\frac{\dt'}{2} + \left\|w\theta\left(g 1_A\otimes \varphi(f)\right)w^*- g\theta(1_A\otimes \varphi(f))\right\|\\
&<\frac{\dt'}{2}+\frac{\dt'}{2}=\delta'<\varepsilon,\end{align*}
 and thus $\|\beta_g(\psi(f))- \psi(\texttt{Lt}_g(f))\|<\varepsilon$ for all $g\in G$
and for all $f\in S$. On the other hand, given $a\in F$ and $f\in S$, we use the identity
\[(a\otimes 1_\D)(1_A\otimes \varphi(f))=(1_A\otimes \varphi(f))(a\otimes 1_\D)\]
at the third step, to obtain
\begin{align*}\|\psi(f)a-a\psi(f)\|& = \|\theta(1_A\otimes \varphi(f))a-a\theta(1_A\otimes \varphi(f))\| \\
&\leq \|\theta(1_A\otimes \varphi(f))a-\theta(1_A\otimes \varphi(f))w\theta(a\otimes 1_\D)w^*\|\\
& \ \ \ +\|\theta(1_A\otimes \varphi(f))w\theta(a\otimes 1_\D)w^*-w\theta(a\otimes 1_\D)w^*\theta(1_A\otimes \varphi(f))\|\\
& \ \ \ +\|w\theta(a\otimes 1_\D)w^*\theta(1_A\otimes \varphi(f))-a\theta(1_A\otimes \varphi(f))\|\\
&<\frac{\dt'}{2}+0+\frac{\dt'}{2}=\delta'<\varepsilon.\end{align*}
This proves the claim.\\
\indent It remains to prove that $\|\beta_g(a)-\alpha_g(a)\|<\delta$ for all $a$ in $E$ and all $ g$ in $ G$.
For fixed $g\in G$, choose $j\in \{1,\ldots,m\}$ such that $d(g,g_j)<\delta_0$. Then, for $a\in E$, we have
\begin{align*}\|\beta_g(a)-\alpha_g(a)\|&=\|w\theta\left((\alpha_g\otimes\gamma_g)(\theta^{-1}(w^*aw))\right)-\alpha_g(a)\| \\
&\leq \|w\theta\left((\alpha_g\otimes\gamma_g)(\theta^{-1}(w^*aw))\right)-w\theta\left((\alpha_g\otimes\gamma_g)(a\otimes 1_\D)\right)\|\\
& \ \ \ + \|w\theta\left((\alpha_g\otimes\gamma_g)(a\otimes 1_\D)\right)-\alpha_g(a)\|\\
&< \frac{\dt'}{2}+\|w\theta(\alpha_g(a)\otimes 1_\D)w^*-\alpha_g(a)\|\\
&\leq \frac{\dt'}{2}+ \|w\theta(\alpha_g(a)\otimes 1_\D)w^*-w\theta(\alpha_{g_j}(a)\otimes 1_\D)w^*\|\\
&\ \ \ +\|w\theta(\alpha_{g_j}(a)\otimes 1_\D)w^*-\alpha_{g_j}(a)\|+\|\alpha_{g_j}(a)-\alpha_{ g}(a)\|\\
&<\frac{\dt'}{2}+\frac{\dt'}{4}+\frac{\dt'}{4}=\delta'<\delta.\end{align*}
This finishes the proof.\end{proof}

\begin{thm}\label{Rokhlin are generic on D-absorbing algs}
Let $A$ and $\D$ be unital, separable \ca s, such that there is an action
$\gamma\colon G\to\Aut(\D)$ with the strong \Rp.
Suppose that there exists an isomorphism $\theta\colon A\otimes \D\to A$ such that
$a\mapsto \theta(a\otimes 1_\D)$ is approximately unitarily equivalent to $\id_A$.
Then the set of actions of $G$ on $A$ with the \Rp\ is a dense
$G_\delta$-set in $\mbox{Act}_G(A)$. \end{thm}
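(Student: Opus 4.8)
The plan is to exhibit the set of actions with the \Rp\ as a countable intersection of open dense subsets of the complete metric space $\big(\mbox{Act}_G(A),\rho_{G,X}\big)$, and then to conclude by the Baire category theorem. Essentially all of the analytic work has already been front-loaded into \autoref{W(F,epsilon) is dense}; what remains is purely a matter of assembling the previous results.

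First I would fix an enumeration $X=\{a_1,a_2,\ldots\}$ of a countable dense subset of the unit ball of $A$ and a countable dense subset $Y$ of the unit ball of $C(G)$. By \autoref{lma}, an action $\alpha\in\mbox{Act}_G(A)$ has the \Rp\ if and only if it belongs to the intersection
\[\bigcap_{F\in \mathcal{P}_f(X)}\bigcap_{S\in \mathcal{P}_f(Y)}\bigcap_{n=1}^\I W_G\left(F,S,\frac{1}{n}\right).\]
Since $X$ and $Y$ are countable, so are $\mathcal{P}_f(X)$ and $\mathcal{P}_f(Y)$, and hence this is a \emph{countable} intersection of sets of the form $W_G(F,S,\frac{1}{n})$.

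Next I would invoke \autoref{W(F,epsilon) is dense}, whose hypotheses are exactly those of the present theorem, to conclude that each set $W_G(F,S,\frac{1}{n})$ is open and dense in $\mbox{Act}_G(A)$. Openness of each term shows at once that the intersection above is a $G_\delta$-set. For density, I would use that $\rho_{G,X}$ is a complete metric on $\mbox{Act}_G(A)$ by \autoref{space of T-actions is complete}, so that $\mbox{Act}_G(A)$ is a Baire space; the Baire category theorem then guarantees that a countable intersection of open dense subsets is dense. Combining these two observations shows that the set of Rokhlin actions is a dense $G_\delta$-set, as claimed.

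There is essentially no obstacle internal to this statement: the substantive content, namely the density of $W_G(F,S,\varepsilon)$ resting on the $\D$-absorbing structure of $A$ and the strong \Rp\ of $\gamma$, has already been handled in \autoref{W(F,epsilon) is dense}. The only point requiring a moment's care is the reduction from the \emph{uncountable} family of conditions defining the \Rp\ (over all finite $F\subseteq A$, all finite $S\subseteq C(G)$, and all $\varepsilon>0$) to the \emph{countable} family indexed by $\mathcal{P}_f(X)$, $\mathcal{P}_f(Y)$ and $n\in\N$; this is precisely the separability-based approximation recorded in \autoref{lma}.
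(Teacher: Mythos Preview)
Your proposal is correct and follows essentially the same approach as the paper: express the set of Rokhlin actions as the countable intersection $\bigcap W_G(F,S,1/n)$ via \autoref{lma}, invoke \autoref{W(F,epsilon) is dense} for openness and density of each term, and apply the Baire category theorem using completeness from \autoref{space of T-actions is complete}. There is nothing to add.
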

\begin{proof} By \autoref{lma}, the set of all $G$-actions on $A$ that have the \Rp\ is precisely the countable intersection
$$\bigcap_{F\in \mathcal{P}_f(X)}\bigcap_{S\in \mathcal{P}_f(Y)}\bigcap_{n\in\N} W_G\left(F,S,\frac{1}{n}\right).$$
By \autoref{W(F,epsilon) is dense}, each $W_G\left(F,S,\frac{1}{n}\right)$ is open and dense in $\mbox{Act}_G(A)$, which is a complete metric space
by \autoref{space of T-actions is complete}. The result then follows from the Baire Category Theorem.\end{proof}

We make some comments on the verification of the assumptions in the theorem above.
Recall that a unital, separable \ca\ $\D$ is said to be \emph{strongly self-absorbing},
if it is infinite-dimensional and the map $\D\to \D\otimes \D$ given by $d\mapsto d\otimes 1$,
is approximately unitarily equivalent to an isomorphism. Whenever $\D$ is strongly self-absorbing,
and $A$ is a separable, \uca\ satisfying $A\otimes\D\cong A$, then there exists an isomorphism
$\theta\colon A\otimes \D\to A$ satisfying the assumptions of \autoref{Rokhlin are generic on D-absorbing algs};
see, for example, Theorem~7.2.2 in \cite{RorSto_classification_2002}.
On the other hand, finding compact group actions on strongly self-absorbing \ca s with the (strong) Rokhlin
property is not always easy. For example, neither the Jiang-Su algebra $\mathcal{Z}$ nor the Cuntz algebra
$\mathcal{O}_\I$ admit any non-trivial compact group action with the Rokhlin property; see \cite{Gar_automatic_2017}
(the case of Lie groups was proved in \cite{HirPhi_rokhlin_2015}).
At the opposite end, we already saw in \autoref{lma:OtStrRp} that \emph{every} compact group admits an action on $\mathcal{O}_2$
with the strong Rokhlin property.

Finding examples of Rokhlin actions on UHF-algebras is much more subtle. For example, the main result of
\cite{Gar_automatic_2017} asserts that only totally disconnected groups can act on a UHF-algebra with the Rokhlin
property. Here, we only need to know that totally disconnected groups admit actions on UHF-algebras with the
(strong) Rokhlin property. We need some notation:

\begin{df}
Let $G$ be a totally disconnected group, and let $\mathcal{P}$ denote the set of all prime numbers.
Define a supernatural number $s_G\colon \mathcal{P}\to \{0,\I\}$ by $s_G(p)=\I$ if and only if
$p$ divides the order of some finite quotient of $G$. We denote by $D_G$ the UHF-algebra of infinite type
associated to $s_G$. Explicitly, $D_G=\bigotimes_{p\in\mathcal{P}} M_{p^{s_G(p)}}$.
\end{df}

\begin{thm}\label{thm:TotDiscUHF}
Let $G$ be a totally disconnected group. Then there exists an action $\delta_G\colon G\to\Aut(D_G)$
with the strong Rokhlin property.
\end{thm}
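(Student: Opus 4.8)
The plan is to realize $D_G$ as an infinite tensor product of matrix algebras carrying a diagonal action of $G$ through its finite quotients, and then to verify the strong Rokhlin property by exhibiting a sequence of \emph{honest} equivariant unital homomorphisms $C(G)\to D_G$ whose images become asymptotically central. First I would fix the profinite structure: write $G=\varprojlim G_n$ for a decreasing sequence $N_1\supseteq N_2\supseteq\cdots$ of open normal subgroups with $\bigcap_n N_n=\{1\}$, set $G_n=G/N_n$, and let $\pi_n\colon G\to G_n$ and $q_{n+1,n}\colon G_{n+1}\to G_n$ be the quotient maps; since $N_{n+1}\subseteq N_n$ one has $|G_n|\mid|G_{n+1}|$. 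Writing $\lambda^{(n)}$ for the left regular representation of $G_n$ on $\ell^2(G_n)$, the diagonal inclusion $C(G_n)\hookrightarrow \mathcal{L}(\ell^2(G_n))=M_{|G_n|}$ is equivariant from $(C(G_n),\texttt{Lt})$ to $(M_{|G_n|},\Ad\lambda^{(n)})$, because $\lambda^{(n)}_h M_\phi(\lambda^{(n)}_h)^*=M_{\texttt{Lt}_h\phi}$. I then set $A=\bigotimes_{(n,j)\in\N\times\N}M_{|G_n|}$, with $G$ acting on the factor indexed by $(n,j)$ via $g\mapsto\Ad\lambda^{(n)}_{\pi_n(g)}$. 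A short computation with supernatural numbers shows $A\cong D_G$ (the prime $p$ occurs with infinite multiplicity precisely when $p$ divides some $|G_n|$, i.e.\ when $s_G(p)=\I$), and the diagonal action $\delta_G$ is continuous since it is continuous on the dense subalgebra of finitely supported tensors and the factors are uniformly bounded.

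By the pointwise reformulation of the strong Rokhlin property given in the remark following \autoref{df:StrongRp}, it suffices to produce unital equivariant homomorphisms $\psi^{(m)}\colon(C(G),\texttt{Lt})\to(A,\delta_G)$ with $\lim_m\|\psi^{(m)}(f)a-a\psi^{(m)}(f)\|=0$ for all $f\in C(G)$ and $a\in A$. Since $C(G)=\varinjlim_n C(G_n)$ along the maps $q_{n+1,n}^\ast$, such a homomorphism is the same as a coherent family of \emph{regular Rokhlin partitions}: projections $\{P^{(n)}_x\}_{x\in G_n}$ in $A$ with $\sum_x P^{(n)}_x=1$, with $\delta_{G,g}(P^{(n)}_x)=P^{(n)}_{\pi_n(g)x}$, and satisfying the refinement relation $\sum_{q_{n+1,n}(z)=x}P^{(n+1)}_z=P^{(n)}_x$. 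I would construct one such family by induction on $n$, using at the passage from level $n$ to level $n+1$ a single fresh factor of type $M_{|G_{n+1}|}$. For $n=1$ take $P^{(1)}_x=e_{xx}$ in a factor $M_{|G_1|}$, the $\delta_G$-orbit of the seed $e_{ee}$. For the inductive step, choose a transversal $T_n$ for the right cosets of $K_n:=\ker q_{n+1,n}$ in $G_{n+1}$, form the seed $\sigma_{n+1}=P^{(n)}_e\otimes\big(\sum_{t\in T_n}e_{tt}\big)$ in the new factor, and define $P^{(n+1)}_z=\delta_{G,g}(\sigma_{n+1})$ for any $g\in G$ with $\pi_{n+1}(g)=z$. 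Gluing the resulting coherent family over $C(G)=\varinjlim C(G_n)$ yields one exact equivariant unital homomorphism.

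The main obstacle is exactly this inductive step, since the extensions $1\to K_n\to G_{n+1}\to G_n\to 1$ need not split, so there is no product decomposition of the refined partition over cosets. I expect to bypass splittings entirely by defining $P^{(n+1)}_z$ as a \emph{single} $\delta_G$-orbit rather than a coset product. Well-definedness then reduces to checking that $N_{n+1}$ fixes the seed: if $\pi_{n+1}(g)=\pi_{n+1}(g')$ then $g'=gc$ with $c\in N_{n+1}$, and $\pi_n(c)=e$, $\pi_{n+1}(c)=e$, so $\delta_{G,c}(\sigma_{n+1})=\sigma_{n+1}$. Regularity is immediate from the orbit definition. The refinement relation follows because, fixing $z_0$ over $x$ and letting $z$ run over the fiber $z_0 K_n$, orthogonality of the pieces holds since distinct transversal elements lie in distinct right $K_n$-cosets, while $\sum_{\kappa\in K_n}\sum_{t\in T_n}e_{z_0\kappa t,\,z_0\kappa t}=\sum_{w\in G_{n+1}}e_{z_0 w,\,z_0 w}=1$, giving $\sum_{q_{n+1,n}(z)=x}P^{(n+1)}_z=P^{(n)}_x\otimes 1$. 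Thus the regularity of the transversal projection absorbs the extension cocycle, in the spirit of Fell's absorption principle, without ever choosing a homomorphic section.

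Finally, for asymptotic centrality I would run the entire construction inside the $m$-th column of factors, i.e.\ using only the indices $\{(n,m):n\in\N\}$, obtaining $\psi^{(m)}$ with image contained in $\bigotimes_{n}M_{|G_n|}^{(n,m)}$. Distinct columns are tensorially independent, so for any fixed $a\in A$, approximated within $\varepsilon$ by an element supported on a finite set $E\subseteq\N\times\N$ of factors, every $m$ larger than all second coordinates occurring in $E$ makes the image of $\psi^{(m)}$ commute with that approximant. Hence $\|\psi^{(m)}(f)a-a\psi^{(m)}(f)\|\to 0$ for all $f\in C(G)$ and $a\in A$, and the sequence $(\psi^{(m)})_m$ witnesses the strong Rokhlin property of $\delta_G$ on $D_G$.
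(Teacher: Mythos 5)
Your argument is correct, and it takes a genuinely different route from the paper: for \autoref{thm:TotDiscUHF} the paper offers no argument at all, deferring to the forthcoming reference \cite{GarKalLup_totally_2017}, whose method is described as constructing approximately representable coactions of $G$ on UHF-algebras and invoking noncommutative duality (with the abelian case in \cite{GarLup_cocycle_2016}), and which the text explicitly calls ``much more complicated'' than the finite case. You instead give a direct, self-contained construction: realize $D_G\cong\bigotimes_{(n,j)}\mathcal{L}(\ell^2(G_n))$ with $G$ acting diagonally through the regular representations of its finite quotients (your supernatural-number count is right, since the primes dividing some $|G_n|$ are exactly those dividing the order of some finite quotient of $G$), and produce honest unital equivariant homomorphisms $C(G)=\varinjlim C(G_n)\to D_G$ as coherent equivariant partitions of unity. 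The crux is your inductive step, and it does work: since the extensions $1\to K_n\to G_{n+1}\to G_n\to 1$ need not split, you avoid sections entirely and define the level-$(n+1)$ projections as the $\delta_G$-orbit of the seed $P^{(n)}_e\otimes\sum_{t\in T_n}e_{tt}$; well-definedness holds because $N_{n+1}$ acts trivially on every factor in sight, orthogonality within a fiber holds because distinct elements of $T_n$ represent distinct right $K_n$-cosets, and the refinement identity $\sum_{q_{n+1,n}(z)=x}P^{(n+1)}_z=P^{(n)}_x\otimes 1$ is precisely the bijectivity of $K_n\times T_n\to G_{n+1}$, $(\kappa,t)\mapsto\kappa t$. (Explicitly $P^{(n+1)}_z=P^{(n)}_{q_{n+1,n}(z)}\otimes\sum_{t\in T_n}e_{zt,zt}$, from which all three checks are one-line computations; I also verified the non-split example $\Z_4\to\Z_2$.) The column trick then gives exact commutation of the image of $\psi^{(m)}$ with any element supported off column $m$, so $f\mapsto(\psi^{(m)}(f))_m$ is a unital equivariant homomorphism into $\ell^\I_\alpha(\N,D_G)$ whose image modulo $c_0(\N,D_G)$ lies in the central sequence algebra; this witnesses the strong Rokhlin property directly from \autoref{df:StrongRp}, without even needing the approximate reformulation. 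As for what each approach buys: yours makes \autoref{thm:TotDiscUHF} elementary and removes the dependence on an unpublished reference; the coaction/duality machinery of \cite{GarKalLup_totally_2017} is aimed at more than bare existence (approximate representability, uniqueness and classification of such actions, and the quantum-group setting), none of which is needed for the statement at hand.
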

\begin{proof} See \cite{GarKalLup_totally_2017}. \end{proof}

The construction of $\delta_G$ is straightforward when $G$ is finite. The general case is, however, much more
complicated, and it involves constructing approximately representable coactions of $G$ on UHF-algebras, and using
noncommutative duality. The essential ideas already show up in the case when $G$ is abelian; see~\cite{GarLup_cocycle_2016}.

Combining these results, we obtain the following:

\begin{cor}\label{cor:generic} Let $A$ be a separable \uca\ and let $G$ be a compact
group.
\be\item If $A\otimes\Ot\cong A$, then the set of all $G$-actions on $A$ with the \Rp\ is a dense $G_\delta$-set in $\mbox{Act}_G(A)$.
\item If $G$ is totally disconnected and $A\otimes D_G\cong A$, then the set of all $G$-actions on $A$ with the \Rp\ is a dense
$G_\delta$-set in $\mbox{Act}_G(A)$.
\ee
\end{cor}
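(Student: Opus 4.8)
The plan is to deduce both parts from \autoref{Rokhlin are generic on D-absorbing algs} by feeding it the correct absorbed building block $\D$: namely $\D=\Ot$ for part~(1) and $\D=D_G$ for part~(2). That theorem has exactly two hypotheses to discharge in each case: (i) that $\D$ carries a $G$-action with the strong \Rp, and (ii) that there is an isomorphism $\theta\colon A\otimes\D\to A$ for which $a\mapsto\theta(a\otimes 1_\D)$ is approximately unitarily equivalent to $\id_A$. Once (i) and (ii) are in place, the conclusion that the Rokhlin actions form a dense $G_\delta$-subset of $\mbox{Act}_G(A)$ is immediate.

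For hypothesis~(i) I would simply cite the results already established. In part~(1), \autoref{lma:OtStrRp} provides a continuous action of $G$ on $\Ot$ with the strong \Rp. In part~(2), since $G$ is totally disconnected, \autoref{thm:TotDiscUHF} supplies the action $\delta_G\colon G\to\Aut(D_G)$ with the strong \Rp. No further work is needed here, as these are precisely the actions $\gamma$ demanded by the theorem.

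The substance of hypothesis~(ii) is that $\Ot$ and $D_G$ are strongly self-absorbing. For $\Ot$ this is classical. For $D_G$ one observes that $D_G=\bigotimes_{p}M_{p^{s_G(p)}}$ is a UHF-algebra of \emph{infinite type}, because $s_G$ takes only the values $0$ and $\I$; every such UHF-algebra is strongly self-absorbing provided it is infinite-dimensional, which holds as soon as $G$ is nontrivial (a nontrivial compact totally disconnected group is profinite and hence has a nontrivial finite quotient, forcing $s_G(p)=\I$ for some prime $p$). Granting strong self-absorption together with the standing hypothesis $A\otimes\D\cong A$, the desired $\theta$ with $a\mapsto\theta(a\otimes 1_\D)$ approximately unitarily equivalent to $\id_A$ follows from the standard theory of strongly self-absorbing algebras (e.g.\ Theorem~7.2.2 in \cite{RorSto_classification_2002}), exactly as recorded in the discussion following \autoref{Rokhlin are generic on D-absorbing algs}.

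Finally I would dispose of the one degenerate case: when $G$ is trivial, part~(2) gives $D_G=\C$, which is not strongly self-absorbing; but then $\mbox{Act}_G(A)$ is a single point whose unique (trivial) action has the \Rp, so the statement holds trivially. I expect no genuine obstacle in this corollary, since all the difficulty has been front-loaded into \autoref{Rokhlin are generic on D-absorbing algs}, \autoref{lma:OtStrRp}, and especially \autoref{thm:TotDiscUHF}; the only point requiring a moment's care is verifying that $D_G$ is strongly self-absorbing and isolating the trivial-group exception.
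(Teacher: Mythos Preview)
Your proposal is correct and follows essentially the same route as the paper: both parts are deduced from \autoref{Rokhlin are generic on D-absorbing algs} by invoking \autoref{lma:OtStrRp} (respectively \autoref{thm:TotDiscUHF}) for the strong Rokhlin action on $\D$, and Theorem~7.2.2 of \cite{RorSto_classification_2002} for the approximately inner first-factor embedding. Your additional remarks—that $D_G$ is strongly self-absorbing because it is a UHF-algebra of infinite type, and the isolation of the trivial-group case—are helpful clarifications that the paper leaves implicit.
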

\begin{proof} (1). By \autoref{lma:OtStrRp}, there is an action $\gamma\colon G\to\Aut(\Ot)$ with the strong \Rp.
Since $A$ absorbs $\Ot$ tensorially,
the hypotheses of \autoref{Rokhlin are generic on D-absorbing algs} are met by Theorem~7.2.2 in \cite{RorSto_classification_2002},
and the result follows.

(2). This is analogous, using \autoref{thm:TotDiscUHF} instead of \autoref{lma:OtStrRp}. \end{proof}

\section{\texorpdfstring{$K$}{K}-theory and Cuntz semigroups of crossed products}

We begin this section by proving the main technical theorem that will be used in the
proofs of essentially every other result in this section. Roughly speaking, \autoref{thm:ApproxHomFixingFPA}
will allow us to take averages over the group $G$, in such a way that elements of the fixed point
algebra are left fixed, and also such that $\ast$-polynomial relations in the algebra are
approximately preserved.

We give a sketch of the proof in the case that $G$ is finite and there is an equivariant unital
embedding of $C(G)$ into the center of $A$, since this case is very simple and contains the main idea. 
Such an embedding gives us central projections $p_g\in A$,
for $g\in G$, which satisfy $\alpha_g(p_h)=p_{gh}$ for all $g,h\in G$ and $\sum\limits_{g\in G}p_g=1$.
Define a map $\psi\colon A\to A^\alpha$ by 
\[\psi(a)=\sum_{g\in G}p_g\alpha_g(a)p_g\]
for all $a\in A$. One can directly check that $\psi$ is a homomorphism that leaves $A^\alpha$ fixed, as desired.
In general, when the embedding of $C(G)$ is only almost central and almost multiplicative, one obtains an
almost homomorphism map.

\begin{thm}\label{thm:ApproxHomFixingFPA}
Let $A$ be a unital \ca, let $G$ be a second-countable compact group, and let
$\alpha\colon G\to\Aut(A)$ be an action with the \Rp. Given a compact subset $F\subseteq A$
and $\varepsilon>0$, there exists a unital, continuous linear map $\psi\colon A\to A^\alpha$
satisfying
\be
\item $\|\psi(ab)-\psi(a)\psi(b)\|<\ep$ for all $a,b\in F$;
\item $\|\psi(a^*)-\psi(a)^*\|<\ep$ for all $a\in F$;
\item $\|\psi(a)\|\leq 2\|a\|$ for all $a\in F$; and
\item $\psi(a)=a$ for all $a\in A^\alpha$.\ee

If $A$ is separable, it follows that there exists an approximate homomorphism
$(\psi_n)_{n\in\N}$ consisting of bounded, unital linear maps
$\psi_n\colon A\to A^\alpha$ satisfying $\psi_n(a)=a$
for all $a\in A^\alpha$. \end{thm}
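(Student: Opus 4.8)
The plan is to use the Rokhlin homomorphism $\varphi\colon(C(G),\texttt{Lt})\to(A_{\I,\alpha}\cap A',\alpha_\I)$ to construct the desired averaging map, following the finite-group sketch but replacing the discrete sum $\sum_g p_g\alpha_g(\cdot)p_g$ with an integral against the $G$-invariant Haar measure. Concretely, I would first lift $\varphi$ to a sequence of unital completely positive contractive maps $\varphi^{(n)}\colon C(G)\to A$ (via the characterization in part (a) of the Remark, or directly by lifting the c.p.c.\ map out of $A_{\I,\alpha}$), each approximately equivariant, approximately multiplicative on a prescribed finite set $S\subseteq C(G)$, and with image approximately central relative to the finite set $F$. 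The idea is that the values $\varphi^{(n)}(f)$ for $f$ ranging over a suitable finite subset of $C(G)$ play the role of the central projections $p_g$, and the Haar integral plays the role of $\sum_{g\in G}$.

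The main step is to define, for a fixed approximating c.p.c.\ map $\varphi=\varphi^{(n)}$,
\[
\psi(a)=\int_G \varphi(\chi_g)\,\alpha_g(a)\,\varphi(\chi_g)\,dg,
\]
where $\{\chi_g\}$ is a partition-of-unity-type family in $C(G)$ approximating point masses at $g$, or more cleanly
\[
\psi(a)=\int_G (\alpha_\I)_g\bigl(\varphi(e)\bigr)\,\alpha_g(a)\,(\alpha_\I)_g\bigl(\varphi(e)\bigr)\,dg
\]
for a suitable positive normalized $e\in C(G)$; I would work out which formulation makes the multiplicativity and fixing estimates cleanest. Because $\varphi$ is equivariant, $(\alpha_\I)_g(\varphi(e))=\varphi(\texttt{Lt}_g(e))$, so the integrand is controlled in $C(G)$, and integrating against Haar measure produces a map into $A^\alpha$: invariance of the output under $\alpha$ follows from left-invariance of Haar measure together with the equivariance of $\varphi$. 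Property (4), that $\psi$ fixes $A^\alpha$, is the heart of the construction and mirrors the finite case exactly: for $a\in A^\alpha$ one has $\alpha_g(a)=a$, which pulls out of the integrand, and the remaining $\int_G \varphi(\chi_g)^2\,dg$ collapses to $\varphi(1)=1$ using that $\varphi$ is unital and that $\{\chi_g^2\}$ integrates to the constant function $1$ on $G$. Since $\varphi(1)=1_A$ exactly (the lift can be arranged unital), this gives $\psi(a)=a$ on the nose, not merely approximately, which matches the exact equality demanded in (4).

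The hard part will be controlling approximate multiplicativity, estimate (1), uniformly in $g$ under the integral. The difficulty is genuinely that of the compact, non-discrete setting: one cannot rely on orthogonality of projections $p_g p_h=\delta_{g,h}p_g$ that trivializes the cross terms in the finite case, so the multiplicativity of $\psi$ rests on the almost-multiplicativity of $\varphi$ on $S$ together with the near-centrality $\|\varphi(f)a-a\varphi(f)\|<\ep$ for $a\in F$, and these estimates must be integrated and controlled uniformly over the compact group $G$. I would handle this by choosing $S$ and the partition fine enough, using a uniform continuity / compactness argument (the same mechanism that forces the uniform estimate in the Dini-theorem argument of the earlier Proposition) to pass from pointwise-in-$g$ bounds to a single bound under the integral; this yields (1), (2), and the norm bound (3). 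Finally, for the last assertion, if $A$ is separable I would exhaust $A$ by an increasing sequence of finite subsets $F_1\subseteq F_2\subseteq\cdots$ with dense union and let $\ep_n\to 0$, applying the first part to each $(F_n,\ep_n)$ to produce bounded unital maps $\psi_n\colon A\to A^\alpha$ that fix $A^\alpha$ exactly and satisfy $\|\psi_n(ab)-\psi_n(a)\psi_n(b)\|\to 0$ and $\|\psi_n(a^*)-\psi_n(a)^*\|\to 0$ for every $a,b\in A$; this is precisely the assertion that $(\psi_n)_{n\in\N}$ is an approximate homomorphism.
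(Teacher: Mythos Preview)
Your proposal has the right shape, but there is a concrete gap in the construction that the paper's proof avoids. The issue is property~(4): you want $\psi(a)=a$ \emph{exactly} for $a\in A^\alpha$, and you simultaneously want the range of $\psi$ to lie \emph{exactly} in $A^\alpha$. With your two-sided formula $\psi(a)=\int_G \varphi(\chi_g)\,\alpha_g(a)\,\varphi(\chi_g)\,dg$, for $a\in A^\alpha$ you are left with $\int_G \varphi(\chi_g)^2\,dg$, and collapsing this to $1$ uses the approximate multiplicativity of the lifted map $\varphi=\varphi^{(n)}$; so you only get $\psi(a)\approx a$, not equality. Separately, your claim that the integral lands in $A^\alpha$ uses exact equivariance of $\varphi$, but once you pass to a lift $\varphi^{(n)}\colon C(G)\to A$, equivariance is only approximate, so the output is only approximately fixed by $\alpha$. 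Neither of these can be repaired by taking $n$ large, because the statement asks for exact equality in~(4) and exact range $A^\alpha$.

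The paper's fix is clean and worth internalizing. First, it uses a \emph{one-sided} formula built from a finite partition of unity $(f_k)_{k\in K}$ on $G$: set $\psi(a)=E\big(\sum_{k\in K}\varphi(f_k)\alpha_k(a)\big)$, where $E\colon A\to A^\alpha$ is the canonical conditional expectation. For $a\in A^\alpha$ this gives $E\big(\sum_k\varphi(f_k)\,a\big)=E\big(\varphi(1)a\big)=E(a)=a$ exactly, using only \emph{linearity} and unitality of $\varphi$, not multiplicativity. Second, composing with $E$ forces the range into $A^\alpha$ on the nose, regardless of how equivariant the lift is; the approximate invariance of $\sum_k\varphi(f_k)\alpha_k(a)$ (which your Haar/equivariance heuristic is really establishing) is then used only to show that $E$ does not move things much, which is what drives the multiplicativity estimate~(1). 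The finite partition of unity, together with the approximate orthogonality $\varphi(f_k)\varphi(f_{k'})\approx 0$ when $f_kf_{k'}=0$, is also what makes the cross terms in~(1) tractable; your continuous-integral formulation makes this step harder to control.
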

\begin{proof} Without loss of generality, we may assume that $\|a\|\leq 1$ for all $a\in F$.
For the compact set $F$ and the tolerance $\ep_0=\frac{\ep}{6}$,
use Proposition~4.3 in \cite{Gar_regularity_2014} to find a positive number $\delta>0$, a
finite subset $K\subseteq G$, a partition of unity $(f_k)_{k\in K}$ in $C(G)$, and a
unital completely positive map $\varphi\colon C(G)\to A$, such that the following conditions hold:
\be\item[(a)] If $g$ and $g'$ in $G$ satisfy $d(g,g')<\delta$, then $\|\alpha_g(a)-\alpha_{g'}(a)\|<\ep_0$
for all $a\in F$.
\item[(b)] Whenever $k$ and $k'$ in $K$ satisfy $f_kf_{k'}\neq 0$, then $d(k,k')<\delta$.
\item[(c)] For every $g\in G$ and for every $a\in F\cup F^*$, we have
$$\left\| \ \alpha_g\left(\sum\limits_{k\in K} \varphi(f_{k})\alpha_{k}(a)\right)-
\sum\limits_{k\in K} \varphi(f_{k})\alpha_{k}(a)\right\|<\varepsilon_0.$$
\item[(d)] For every $a\in F\cup F^*$ and for every $k\in K$, we have
\[\left\|a\varphi(f_{k})-\varphi(f_{k})a\right\|<\frac{\ep_0}{|K|} \ \mbox{ and } \
\left\|a\varphi(f_{k})^{\frac{1}{2}}-\varphi(f_{k})^{\frac{1}{2}}a\right\|<\frac{\ep_0}{|K|}.\]
\item[(e)] Whenever $k$ and $k'$ in $K$ satisfy $f_{k}f_{k'}=0$, then
\[\left\|\varphi(f_k)\varphi(f_{k'})\right\|<\frac{\ep_0}{|K|}.\]
\ee

Denote by $E\colon A\to A^\alpha$ the canonical conditional expectation, which is given by
$E(a)=\int\limits_G\alpha_g(a)\ dg$ for all $a\in A$.
Define a unital linear map $\psi\colon A\to A^\alpha$ by
\[\psi(a)=E\left(\sum_{k\in K}\varphi(f_k)\alpha_k(a)\right)\]
for all $a\in A$. We claim that $\psi$ has the desired properties. It is immediate that
$\psi(a)=a$ for all $a\in A^\alpha$, using the properties of the conditional expectation $E$,
so condition (4) is guaranteed.

We proceed to check condition (1) in the statement.
Given $a,b\in F$, we use condition (c) at the second and fifth step,
conditions (a), (b), (d) and (e) at the third step, and the fact that $\varphi$ is unital and $(f_k)_{k\in K}$ is
a partition of unity of $C(G)$ at the fourth step, to get
\begin{align*}
\psi(a)\psi(b)&=E\left(\sum_{k\in K}\varphi(f_k)\alpha_k(a)\right)
E\left(\sum_{k'\in K}\varphi(f_{k'})\alpha_{k'}(b)\right)\\
&\approx_{2\ep_0}\sum_{k\in K}\sum_{k'\in K}\varphi(f_k)\alpha_k(a)
\varphi(f_{k'})\alpha_{k'}(b)\\
&\approx_{3\ep_0} \sum_{k\in K}\sum_{k'\in K}\varphi(f_k)\alpha_k(ab)
\varphi(f_{k'})\\
&= \sum_{k\in K}\varphi(f_k)\alpha_k(ab)\\
&\approx_{\ep_0}E\left(\sum_{k\in K}\varphi(f_k)\alpha_k(ab)\right)\\
&=\psi(ab).
\end{align*}
Hence $\|\psi(ab)-\psi(a)\psi(b)\|<6\ep_0=\ep$, and condition (1) is proved.

To prove condition (2), let $a\in F$. In the following computation, we use condition (d) at the second step, and
the fact that $\varphi$ and $E$ are positive at the third step:
\begin{align*} \psi(a^*)&=E\left(\sum_{k\in K}\varphi(f_k)\alpha_k(a)^*\right)\\
&\approx_{\ep_0} E\left(\sum_{k\in K}\alpha_k(a)^*\varphi(f_k)\right)\\
&= E\left(\sum_{k\in K}(\varphi(f_k)\alpha_k(a))^*\right)^*=\psi(a)^*.\end{align*}
Thus, condition (2) is verified.

To check condition (3), let $a\in F$. Then
\[ \psi(a)=E\left(\sum_{k\in K}\varphi(f_k)\alpha_k(a)\right)\\
\approx_{\ep_0} E\left(\sum_{k\in K}\varphi(f_k)^{\frac{1}{2}}\alpha_k(a)\varphi(f_k)^{\frac{1}{2}}\right).\]
Since the assignment $a\mapsto E\left(\sum\limits_{k\in K}\varphi(f_k)^{\frac{1}{2}}\alpha_k(a)\varphi(f_k)^{\frac{1}{2}}\right)$
defines a unital (completely) positive map $\widetilde{\psi}\colon A\to A^\alpha$, we conclude that
\[\|\psi(a)\|\leq \|\widetilde{\psi}(a)\|+\ep_0\leq \|a\|+\ep_0\leq 2\|a\|,\]
as desired.
This finishes the proof.\end{proof}

\subsection{K-theory and fixed point algebras.}
Our first applications of \autoref{thm:ApproxHomFixingFPA} are to the maps induced by the
canonical inclusion $A^\alpha \hookrightarrow A$ at the level of $K$-theory (\autoref{thm:InjK-Thy}),
and to the equivariant $K$-theory $K^G_\ast(A)$ (\autoref{thm:DiscrKthy}).

We start off with an intermediate result. Recall that for two projections $p$ and $q$ in a \ca\ $B$, we
say that $p$ is \emph{Murray-von Neumann subequivalent (in B)} to $q$, written $p\precsim_{\mathrm{M-vN}} q$,
if there exists a third projection $q_0\in B$ such that $p\sim_{\mathrm{M-vN}} q_0$ and $q_0\leq q$. Murray-von
Neumann subequivalence is easily seen to be transitive. (We warn the reader that $p\precsim_{\mathrm{M-vN}} q$
and $q\precsim_{\mathrm{M-vN}} p$ do not in general imply $p\sim_{\mathrm{M-vN}} q$.)

\begin{prop}\label{prop:MvNeq}
Let $A$ be a \uca, let $G$ be a second-countable compact group, and let $\alpha\colon G\to \Aut(A)$ be
an action with the Rokhlin property. Let $p,q\in A^\alpha$ be two projections.
\be\item Suppose that $p\precsim_{\mathrm{M-vN}} q$ in $A$. Then $p\precsim_{\mathrm{M-vN}} q$ in $A^\alpha$.
\item Suppose that $p\sim_{\mathrm{M-vN}} q$ in $A$. Then $p\sim_{\mathrm{M-vN}} q$ in $A^\alpha$.
\ee
\end{prop}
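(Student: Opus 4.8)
The plan is to transport the partial isometry implementing the (sub)equivalence from $A$ to the fixed point algebra $A^\alpha$ using the approximate homomorphism supplied by \autoref{thm:ApproxHomFixingFPA}, and then to correct the resulting approximate partial isometry to a genuine one by a polar decomposition inside a corner of $A^\alpha$. For part~(1), fix $v\in A$ with $v^*v=p$ and $vv^*=q_0\leq q$. Since $q_0\leq q$ and $v^*v=p$, one checks that $v=qv=vp$. I would then apply \autoref{thm:ApproxHomFixingFPA} to the finite (hence compact) set $F=\{v,v^*,p,q,q_0\}$ and a small tolerance $\varepsilon>0$ (to be pinned down at the very end), obtaining a unital linear map $\psi\colon A\to A^\alpha$ that is approximately multiplicative and self-adjoint on $F$, bounded by $2$ there, and fixes $A^\alpha$ pointwise. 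Set $w=\psi(v)\in A^\alpha$.

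The element I really want to work with is $x=qwp\in A^\alpha$. Using the relations $v=qv=vp$ together with properties~(1),~(2) and~(4) of $\psi$, I would first show that $\|x-w\|$ and $\|x^*x-p\|$ are both $O(\varepsilon)$: here $\psi(qv)\approx\psi(q)\psi(v)=qw$ and $\psi(vp)\approx\psi(v)\psi(p)=wp$ let me pull the factors $q$ and $p$ past $\psi$, and crucially every product involved uses only elements of $F$, which is exactly what lets properties~(1)--(2) apply. By construction $x=qx=xp$, so $x^*x=px^*xp$ lies in the corner $pA^\alpha p$ and is within $O(\varepsilon)$ of its unit $p$; for $\varepsilon$ small it is therefore invertible there. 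The polar decomposition $s=x(x^*x)^{-1/2}$, with the inverse square root computed in $pA^\alpha p$, yields a genuine partial isometry $s\in A^\alpha$ with $s^*s=p$. Finally $x=qx$ forces $s=qs$, hence $q\,ss^*=ss^*$, so the projection $ss^*$ satisfies $ss^*\leq q$. Thus $p=s^*s\sim_{\mathrm{M-vN}}ss^*\leq q$ in $A^\alpha$, which is precisely $p\precsim_{\mathrm{M-vN}}q$ in $A^\alpha$.

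For part~(2) I would run the identical construction with $q_0=q$, so that now $vv^*=q$. The same estimates produce $s^*s=p$ and a projection $ss^*\leq q$; the extra input is to also bound $\|xx^*-q\|=O(\varepsilon)$ (via $\psi(v)\psi(v^*)\approx\psi(vv^*)=\psi(q)=q$), whence $\|ss^*-q\|=O(\varepsilon)$ after controlling $\|(x^*x)^{-1}-p\|$. Once $\varepsilon$ is small enough that $\|ss^*-q\|<1$, the rigidity of subprojections finishes the argument: $q-ss^*$ is a projection of norm $<1$, hence zero, so $ss^*=q$ and $s$ implements $p\sim_{\mathrm{M-vN}}q$ in $A^\alpha$.

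The step demanding the most care — and the reason~(2) is not a formal consequence of~(1) — is exactly this final point. As the remark preceding the statement warns, $p\precsim_{\mathrm{M-vN}} q$ and $q\precsim_{\mathrm{M-vN}} p$ in $A^\alpha$ do \emph{not} combine to give $p\sim_{\mathrm{M-vN}} q$, so I cannot simply invoke part~(1) twice. Instead the exact equality $vv^*=q$ must survive the transport-and-perturbation, which is why I track $\|xx^*-q\|$ and appeal to the fact that a subprojection of $q$ lying within distance $1$ of $q$ must equal $q$. The remaining bookkeeping — choosing $\varepsilon$ so that $x^*x$ is invertible in $pA^\alpha p$ and so that all perturbation estimates close up — is routine, and I would collect the required constants at the end rather than optimize them.
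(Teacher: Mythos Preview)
Your argument is correct and rests on the same core idea as the paper's proof: push the partial isometry implementing the (sub)equivalence through the approximate homomorphism $\psi$ of \autoref{thm:ApproxHomFixingFPA}, then repair the output to an honest partial isometry in $A^\alpha$. The execution, however, differs in a way worth noting.

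The paper first perturbs $\psi(q_0)$ to an actual projection $\widetilde{q}_0\in A^\alpha$, then verifies $\widetilde{q}_0\precsim_{\mathrm{M-vN}}q$ via the estimate $\|\widetilde{q}_0-\widetilde{q}_0q\|<1$ and Lemma~2.5.2 in~\cite{Lin_introduction_2001}, and finally checks $p\sim_{\mathrm{M-vN}}\widetilde{q}_0$ by applying Lemma~2.5.3 in~\cite{Lin_introduction_2001} to the element $\widetilde{q}_0\psi(s)p$. Your route bypasses the intermediate projection entirely: by compressing $\psi(v)$ to $x=q\psi(v)p\in qA^\alpha p$ and taking the polar part in the corner $pA^\alpha p$, you obtain a partial isometry $s$ with $s^*s=p$ \emph{exactly} and $ss^*\leq q$ \emph{automatically}, with no appeal to external perturbation lemmas. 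This is a cleaner and more self-contained way to close the argument, and it makes part~(2) drop out with one extra estimate ($\|xx^*-q\|=O(\ep)$) and the observation that a proper subprojection of $q$ cannot lie within distance~$1$ of~$q$. The paper's approach, on the other hand, is more in line with the standard toolkit (Lin's lemmas) used elsewhere in the paper, e.g.\ in the proof of \autoref{thm:InjK-Thy}.
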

\begin{proof}
(1). Find a projection $q_0\in A$ such that $p\sim_{\mathrm{M-vN}} q_0$ and $q_0\leq q$. Find a partial isometry
$s\in A$ such that $s^*s=p$ and $ss^*=q_0$. For $\ep=\frac{1}{21}$, find $\delta_0>0$ such that
whenever $x\in A^\alpha$ satisfies $\|x^*x-x\|<\delta_0$, then there exists a projection $r\in A^\alpha$
such that $\|r-x\|<\ep$. Set $\delta=\min\left\{\frac{\dt_0}{3},\ep\right\}$.

Find a unital, continuous linear map $\psi\colon A\to A^\alpha$ as in the conclusion of
\autoref{thm:ApproxHomFixingFPA} for $\delta$ and $F=\{p,q,q_0,s,s^*\}$. Then
\begin{align*} \|\psi(q_0)^*\psi(q_0)-\psi(q_0)\|&\leq \|\psi(q_0)^*\psi(q_0)-\psi(q_0^*)\psi(q_0)\|+\|\psi(q_0)\psi(q_0)-\psi(q_0^2)\|\\
&< 2\dt+\dt<\dt_0.\end{align*}
By the choice of $\delta_0$, there exists a projection $\widetilde{q}_0$ in $A^\alpha$ such that
$\|\psi(q_0)-\widetilde{q}_0\|<\ep$. We claim that $\widetilde{q}_0\precsim_{\mathrm{M-vN}}q$ in $A^\alpha$

We use the identity $q_0=q_0q$ at the second step to show that
\[\|\widetilde{q}_0-\widetilde{q}_0q\|\leq 2\|\widetilde{q}_0-\psi(q_0)\|+\|\psi(q_0)-\psi(q_0)q\|
<2\ep+\dt\leq 1.\]
It follows from Lemma~2.5.2 in~\cite{Lin_introduction_2001} that $\widetilde{q}_0\precsim_{\mathrm{M-vN}}q$ in $A^\alpha$.

Now we claim that $p\sim_{\mathrm{M-vN}} \widetilde{q}_0$ in $A^\alpha$.
We use the identity $ps^*q_0s p=p$ at the last step to show
\begin{align*}
\|(\widetilde{q}_0\psi(s)p)^*(\widetilde{q}_0\psi(s)p)-p\|&\leq \|p\psi(s)^*\widetilde{q}_0^2\psi(s)p-p\psi(s^*)\widetilde{q}_0^2\psi(s)p\| + 4 \|\widetilde{q}_0-\psi(q_0)\|\\
& \ \ \ \ + \|\psi(p)\psi(s^*)\psi(q_0)\psi(s)\psi(p)-\psi(s^*s)\|\\
&< 2\dt+4\ep + 15\dt + \|\psi(ps^*q_0 s p)-\psi(s^*s)\|<21\ep=1.
\end{align*}
Likewise, $\|(\widetilde{q}_0\psi(s)p)(\widetilde{q}_0\psi(s)p)^*-q\|<1$.
By Lemma~2.5.3 in \cite{Lin_introduction_2001} applied to $\widetilde{q}_0\psi(s)p$, there exists a partial isometry $t$ in $A^\alpha$ such that
$t^*t=p$ and $tt^*=\widetilde{q}_0$.

We conclude that
\[p\sim_{\mathrm{M-vN}}\widetilde{q}_0 \precsim_{\mathrm{M-vN}} q\]
in $A^\alpha$, so the proof is complete.

(2). The proof of this part is analogous, so we omit the details.
\end{proof}

Part (1) of the following theorem generalizes Theorem~3.13 in~\cite{Izu_finiteI_2004} in two ways: we do not
assume our algebras to be simple, and we consider compact groups. It has also been independently observed in
\cite{BarSza_sequentially_2016}.
The remaining parts are new even when $G$ is finite.

\begin{thm}
\label{thm:InjK-Thy}
Let $A$ be a unital, separable \ca, let $G$ be a second-countable compact group, and let
$\alpha\colon G\to\Aut(A)$ be an action with the \Rp. Then the following assertions hold:
\be\item
The canonical inclusion $\iota\colon A^\alpha\to A$ induces an injective map
\[K_\ast(\iota)\colon K_\ast(A^\alpha)\to K_\ast(A).\]
\item The map $K_0(\iota)\colon K_0(A^\alpha)\to K_0(A)$ is an order-embedding; that is, whenever
$x,y\in K_0(A^\alpha)$ satisfy $K_0(\iota)(x)\leq K_0(\iota)(y)$ in $K_0(A)$, then $x\leq y$ in $K_0(A^\alpha)$.
\item Let $j\in \{0,1\}$, and let $H$ be a finitely generated subgroup of $K_j(A)$. Then there
exists a group homomorphism $\pi\colon H\to K_0(A^\alpha)$ such that
\[\pi\circ\iota|_{K_0(\iota)^{-1}(H)}=\id_{K_0(\iota)^{-1}(H)}.\]
\ee
\end{thm}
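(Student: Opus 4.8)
The plan is to deduce everything from the approximate-homomorphism machinery in \autoref{thm:ApproxHomFixingFPA}, which produces unital linear maps $\psi_n\colon A\to A^\alpha$ that are asymptotically multiplicative, asymptotically $\ast$-preserving, uniformly bounded, and \emph{fix} $A^\alpha$ pointwise. The key observation is that such a sequence $(\psi_n)_{n\in\N}$ assembles into a genuine $\ast$-homomorphism
\[
\Psi\colon A\to (A^\alpha)_\I,\qquad \Psi(a)=\kappa_{A^\alpha}\big((\psi_n(a))_{n\in\N}\big),
\]
into the sequence algebra of the fixed point algebra: conditions (1)--(3) guarantee that the image sequences are bounded and that products and adjoints are preserved in the limit, so $\Psi$ is multiplicative and self-adjoint modulo $c_0(\N,A^\alpha)$, and condition (4) says that $\Psi\circ\iota$ is exactly the constant-embedding $A^\alpha\hookrightarrow(A^\alpha)_\I$. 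Composing with the canonical inclusion $A^\alpha\to A$ and its induced map $(A^\alpha)_\I\to A_\I$ on sequence algebras, one gets that $\iota_\I\circ\Psi$ agrees on $A^\alpha$ with the standard embedding, after applying $K_\ast$.

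For part (1), I would apply $K_\ast$ to the composite $\Psi\circ\iota$, which is homotopic (in fact equal on the nose) to the standard inclusion $A^\alpha\to(A^\alpha)_\I$; since $K_\ast$ of the standard inclusion into a sequence algebra is injective (indeed, $K_\ast(A^\alpha)\to K_\ast((A^\alpha)_\I)$ is split by evaluation), it follows that $K_\ast(\Psi)\circ K_\ast(\iota)$ is injective, hence $K_\ast(\iota)$ is injective. For part (2), the order statement, I would run the same argument at the level of projections using \autoref{prop:MvNeq}: if $p,q\in A^\alpha$ are projections with $K_0(\iota)[p]\leq K_0(\iota)[q]$, then after passing to matrix amplifications and stabilizing one reduces to a subequivalence $p\precsim_{\mathrm{M-vN}}q$ in (a matrix algebra over) $A$, and part (1) of \autoref{prop:MvNeq} transports this subequivalence back to $A^\alpha$, yielding $[p]\leq[q]$ in $K_0(A^\alpha)$. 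Care is needed to handle formal differences of projections and the unitization/stabilization bookkeeping so that an inequality in $K_0(A)$ really comes from an honest subequivalence of projections over $A$.

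For part (3), the local splitting, the point is that $K_\ast(\Psi)\colon K_\ast(A)\to K_\ast((A^\alpha)_\I)$ lands, after the standard evaluation/telescope identification, in a subgroup of $\prod K_\ast(A^\alpha)/\bigoplus K_\ast(A^\alpha)$. The map $\Psi$ itself is not quite a splitting because its target is the sequence algebra rather than $A^\alpha$. The remedy is finite generation: given a finitely generated $H\leq K_j(A)$, choose finitely many generators, represent them by projections/unitaries over $A$, and feed the finite set of these representatives (together with the relevant relations) into \autoref{thm:ApproxHomFixingFPA}. For $n$ large enough, a single $\psi_n$ is multiplicative and $\ast$-preserving to within the tolerance needed to define, via functional calculus, genuine projections and unitaries over $A^\alpha$ out of $\psi_n$ applied to the chosen representatives; this yields an honest partial map $K_j(A)\supseteq H\to K_j(A^\alpha)$. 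Because $\psi_n$ fixes $A^\alpha$, this map restricts to the identity on $K_0(\iota)^{-1}(H)$, which is the desired splitting.

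The main obstacle I anticipate is part (3): turning the asymptotic homomorphism into a single honest group homomorphism \emph{on all of $H$ simultaneously} rather than on each generator separately. The issue is that a $K$-theory class is a formal difference (or a class of a projection only up to stable equivalence), so one must choose representatives and amplifications uniformly, check that the $\delta$-tolerance from \autoref{thm:ApproxHomFixingFPA} is enough both to produce well-defined classes and to make all the group relations among the chosen generators hold exactly in $K_j(A^\alpha)$, and verify independence of the choices. Finite generation is exactly what makes this possible, since only finitely many elements and relations need to be controlled at once, but the stabilization and lifting estimates are the delicate part that must be carried out carefully.
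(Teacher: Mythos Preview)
Your overall strategy is correct and tracks the paper closely for parts~(2) and~(3): the paper proves (2) exactly as you suggest, via \autoref{prop:MvNeq}, and proves (3) by choosing explicit generators for $H$ together with the finitely many relations among them, feeding all of this into \autoref{thm:ApproxHomFixingFPA}, and then using functional calculus to turn near-projections and near-unitaries into honest ones in $A^\alpha$. Your anticipated obstacle is precisely the one the paper works through.

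For part~(1) you take the sequence-algebra route (assembling the $\psi_n$ into a $\ast$-homomorphism $\Psi\colon A\to(A^\alpha)_\I$ with $\Psi\circ\iota$ the constant embedding), which is the sequentially-split argument of \cite{BarSza_sequentially_2016} that the paper cites as an independent proof. The paper itself argues more directly, reducing $K_1$ to $K_0$ via the K\"unneth formula and then invoking \autoref{prop:MvNeq}. Your packaging is cleaner in that it handles $K_0$ and $K_1$ uniformly, while the paper's is more elementary in that it avoids the sequence algebra and any analysis of $K_\ast(B_\I)$.

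Two small points to tighten. First, condition~(3) of \autoref{thm:ApproxHomFixingFPA} gives $\|\psi(a)\|\leq 2\|a\|$ only for $a\in F$, so the $\psi_n$ need not be uniformly bounded on all of $A$, and $(\psi_n(a))_n$ may fail to lie in $\ell^\I(\N,A^\alpha)$. The fix is to use the completely positive variant $\widetilde{\psi}_n(a)=E\big(\sum_k\varphi(f_k)^{1/2}\alpha_k(a)\varphi(f_k)^{1/2}\big)$ from the proof of \autoref{thm:ApproxHomFixingFPA}, which is unital completely positive and hence contractive; by a diagonal argument over a countable dense set one still gets $\widetilde{\Psi}\circ\iota$ equal to the constant embedding in $(A^\alpha)_\I$. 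Second, your parenthetical ``split by evaluation'' is not quite right: evaluation at a coordinate is not defined on $(A^\alpha)_\I$. The injectivity of $K_\ast(B)\to K_\ast(B_\I)$ is nonetheless true and follows from the standard perturbation lemmas (lift an implementing partial isometry or unitary to $\ell^\I$ and evaluate at a late coordinate).
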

\begin{proof}
(1). The result for $K_1$ follows from the result for $K_0$, by tensoring with any unital
\ca\ satisfying the UCT with $K$-theory $(0,\Z)$, and using the K\"unneth formula. We
will therefore only prove the theorem for $K_0$.

Let $x\in K_0(A^\alpha)$ satisfy $K_0(\iota)(x)=0$.
Choose $n\in\N$ and projections $p,q\in M_n(A^\alpha)$ such that $x=[p]-[q]$. Then $[p]=[q]$ in $K_0(A)$.
Without loss of generality,
we may assume that $p$ and $q$ are Murray-von Neumann equivalent in $M_n(A)$.
Since the action $\alpha\otimes\id_{M_n}$ of $G$ on $M_n(A)$ has the Rokhlin property
by part (1) of \autoref{thm: permanence properties}, it follows from part (2) of \autoref{prop:MvNeq}
that $p$ and $q$ are Murray-von Neumann equivalent in $M_n(A^\alpha)$. Hence $x=0$ and $K_0(\iota)$ is
injective.

\vspace{0.3cm}

(2). Let $x,y\in K_0(A^\alpha)$ and suppose that $K_0(\iota)(x)\leq K_0(\iota)(y)$. Choose $n\in\N$ and
projections $p,q,e,$ and $f$ in $M_n(A^\alpha)$, such that $x=[p]-[q]$ and $y=[e]-[f]$. Then
$[p]+[f]\leq [e]+[q]$ in $K_0(A)$. Without loss of generality, we can assume that
$p\oplus f\precsim_{\mathrm{M-vN}} e\oplus q$ in $M_{2n}(A)$. Since the action induced by $\alpha$ on $M_{2n}(A)$
has the Rokhlin property by part (1) of \autoref{thm: permanence properties}, it follows from part (2) of
\autoref{prop:MvNeq} that $p\oplus f\precsim_{\mathrm{M-vN}} e\oplus q$ in $M_{2n}(A^\alpha)$.
Hence $[p]+[f]\leq [e]+[q]$ in $K_0(A^\alpha)$ and thus $x\leq y$ in $K_0(A^\alpha)$, as desired.

\vspace{0.3cm}

(3). We prove the statement only for $K_0$, which without loss of generality we assume
is not the trivial group. So let $H$ be a finitely generated subgroup of $K_0(A)$, and
choose an integer $m\geq 0$, and $k_0,\ldots,k_m\in \N$ such that $H$ is isomorphic to
$\Z^{k_0}\oplus\Z_{k_1}\oplus\cdots\oplus \Z_{k_m}$ as abelian groups. For $1\leq i\leq k_0$
and $1\leq j\leq m$, choose projections
\[p_{0,i}, q_{0,i},p_j,q_j\in M_\I(A),\]
such that $[p_{0,i}]- [q_{0,i}]$ is a (free) generator of the $i$-th copy of $\Z$; and
$[p_j]-[q_j]$ is a generator (of order $k_j$) of $\Z_{k_j}$. Without loss of generality, we
may assume that there are unitaries $u_j\in M_\I(A)$, for $j=1,\ldots,m$, such that
\[u_j \diag(\underbrace{p_j,\ldots,p_j}_{k_j\text{ times}}) u_j^*=\diag(\underbrace{q_j,\ldots,q_j}_{k_j\text{ times}}).\]

For $i=1,\ldots,k_0$, let $\Z^{(i)}$ denote the $i$-th copy of $\Z$ in $H$.
Then the intersection $\Z^{(i)}\cap K_0(\iota)(K_0(A^\alpha))$
is a subgroup of $\Z^{(i)}$, so there exists $n_{i,0}\geq 0$ such that
\[\Z^{(i)}\cap K_0(\iota)(K_0(A^\alpha))=n_i\Z^{(i)}.\]
Find projections $r_{i,0},s_{i,0}\in M_\I(A^\alpha)$ such that
\[[r_{i,0}]-[s_{i,0}]=n_i([p_{0,i}]-[q_{0,i}]) \in K_0(A).\]
Without loss of generality, we may assume that there exists a unitary $v_{i,0}\in M_\I(A)$ such that
\[v_{i,0} \diag(r_{i,0},\underbrace{q_{i,0},\ldots,q_{i,0}}_{n_i\text{ times}}) v_{i,0}^*=
\diag(s_{i,0},\underbrace{p_{i,0},\ldots,p_{i,0}}_{n_i\text{ times}}).\]

Likewise, for $j=1,\ldots,m$, there exists $\ell_j$ dividing $k_j$, such that
\[\Z_{k_j}\cap K_0(\iota)(K_0(A^\alpha))=\ell_j\Z_{k_j}.\]
Find projections $r_{j},s_{j}\in M_\I(A^\alpha)$ such that
\[[r_{j}]-[s_{j}]=\ell_j([p_{j}]-[q_{j}]) \in K_0(A).\]
Without loss of generality, we may assume that there exists a unitary $v_{j}\in M_\I(A)$ such that
\[v_{j} \diag(r_{j},\underbrace{q_j,\ldots,q_j}_{\ell_j\text{ times}}) v_{j}^*=
\diag(s_{j},\underbrace{p_j,\ldots,p_j}_{\ell_j\text{ times}}).\]

\emph{Claim 1:} the set
\[\{[r_{i,0}]-[s_{i,0}]\colon i=1,\ldots,k, n_i\neq 0\}\cup\{[r_{j}]-[s_{j}]\colon j=1,\ldots,m, \ell_j\neq 0\}\subseteq K_0(A^\alpha)\]
generates $K_0(A^\alpha)\cap K_0(\iota)^{-1}(H)$. By construction, the above set, when regarded as a subset of $K_0(A)$
under $K_0(\iota)$, generates $K_0(\iota)(K_0(A^\alpha))\cap H$. Moreover, since $K_0(\iota)$
is injective by part (1) of this theorem, the claim follows.

An analogous argument shows that $[r_{i,0}]-[s_{i,0}]$, for $i=1,\ldots,k_0$, has infinite order, and that
$[r_{j}]-[s_{j}]$, for $j=1,\ldots,m$, has order $\ell_j$.

Find $n$ large enough such that the set
\[F= \left\{p_{0,i}, q_{0,i},r_{0,i},s_{0,i},v_{0,i}\colon i=1,\ldots,k_0\right\}
\cup\left\{p_j,q_j,u_j,r_{j},s_{j},v_{j}\colon j=1,\ldots,m\right\}\]
is contained in $M_n(A)$.
Since the amplification of $\alpha$ to $M_n(A)$ has the Rokhlin property by part (1) of
\autoref{thm: permanence properties}, we can replace $M_n(A)$ with $A$.

Set $\ep=\frac{1}{10}$. Find $\delta_1>0$ such that whenever $x\in A$ satisfies $\|x^*x-x\|<\dt_1$, then
there exists a projection $p\in A$ with $\|p-x\|<\ep$. Find $\dt_2>0$ such that whenever $y\in A$
satisfies $\|y^*y-1\|<\dt_2$ and $\|yy^*-1\|<\dt_2$, then there exists a unitary $u\in A$ with
$\|u-y\|<\ep$.
Set $\dt=\min\left\{\frac{\ep}{3},\frac{\dt_1}{3},\frac{\dt_2}{3}\right\}$.
Use \autoref{thm:ApproxHomFixingFPA} to find a unital, linear map
$\psi\colon A\to A^\alpha$ satisfying
\be
\item[(a)] $\|\psi(a)\psi(b)-\psi(ab)\|<\dt$ for all $a,b\in F\cup F^2$;
\item[(b)] $\|\psi(a)^*-\psi(a^*)\|<\dt$ for all $a\in F\cup F^2$;
\item[(c)] $\|\psi(a)\|\leq 2\|a\|$ for all $a\in F\cup F^2$; and
\item[(d)] $\psi(a)=a$ for all $a\in A^\alpha$.\ee

Let $x\in \{p_{i,0},q_{i,0},p_{j},q_{j}\colon i=1,\ldots,k_0, j=1,\ldots,m\}$. Then
\begin{align*}
\left\|\psi(x)^*\psi(x)-\psi(x)\right\| &\leq
\left\|\psi(x)^*\psi(x)-\psi(x)\psi(x)\right\|
+\left\|\psi(x)\psi(x)-\psi(x^2)\right\|\\
&\leq 2\dt+\dt<\dt_1.
\end{align*}
Using the choice of $\delta_1$, find (and fix) a projection $\widetilde{x}\in A^\alpha$
such that $\|\widetilde{x}-\psi(x)\|<\ep$.

For $y\in \{u_j,v_j\colon j=1,\ldots,m\}$, we have
\begin{align*}
\left\|\psi(y)^*\psi(y)-1\right\| &\leq
\left\|\psi(y)^*\psi(y)-\psi(y^*)\psi(y)\right\|
+\left\|\psi(y^*)\psi(y)-1\right\|\\
&\leq 2\dt+\dt<\dt_2.
\end{align*}
Likewise, $\left\|\psi(y)\psi(y)^*-1\right\|$. Using the choice of $\dt_2$,
find (and fix) a unitary $\widetilde{y}\in A^\alpha$ such that
$\|\widetilde{y}-\psi(y)\|<\ep$.

\emph{Claim 2:} Let $j=1,\ldots,m$. Then $k_j\left([\widetilde{p}_{j}]-[\widetilde{q}_{j}]\right)=0$
in $K_0(A^\alpha)$. In the estimates below, there are $k_j$ repetitions on each diagonal matrix:
\begin{align*}
&\left\|\widetilde u_{j} \diag(\widetilde p_{j},\ldots,\widetilde p_{j}) \widetilde u_{j}^*-
\diag(\widetilde q_{j},\ldots,\widetilde q_{j}) \right\|\\
&\leq \left\|\widetilde u_{j} \diag(\widetilde p_{j},\ldots,\widetilde p_{j}) \widetilde u_{j}^*-
\psi(u_j) \diag(\psi(p_{j}),\ldots,\psi(p_{j})) \psi(u_{j})^*\right\|\\
& \ \ \ \ +\left\|\psi(u_j) \diag(\psi(p_{j}),\ldots,\psi(p_{j})) \psi(u_{j})^*-
\diag(\psi(q_{j}),\ldots,\psi(q_{j})) \right\| \\
& \ \ \ \ + \left\|\diag(\psi(q_{j}),\ldots,\psi(q_{j})) -\diag(\widetilde q_{j},\ldots,\widetilde q_{j}) \right\|\\
&\leq 7\ep+3\dt+\ep \leq 9\ep<1.
\end{align*}
It follows from Lemma~2.5.3 in \cite{Lin_introduction_2001} that $\widetilde u_{j} \diag(\widetilde p_{j},\ldots,\widetilde p_{j}) \widetilde u_{j}^*$ is Murray-von Neumann equivalent to
$\diag(\widetilde q_{j},\ldots,\widetilde q_{j})$ in $A^\alpha$, and the claim is proved.

Define a map $\pi\colon H\to K_0(A^\alpha)$ by
\[\pi([p_{i,0}]-[q_{i,0}])=[\widetilde{p}_{i,0}]-[\widetilde{q}_{i,0}],\]
for $i=1,\ldots,k_0$, and
\[\pi([p_{j}]-[q_{j}])=[\widetilde{p}_{j}]-[\widetilde{q}_{j}],\]
for $j=1,\ldots,m$. Then $\pi$ is a well-defined group homomorphism by the previous claim.

\emph{Claim 3:} We have $\pi\circ K_0(\iota)|_{K_0(\iota)^{-1}(H)}=\id_{K_0(\iota)^{-1}(H)}$.
For this, it is enough to check that
\[\pi\left(K_0(\iota)([r_{i,0}]-[s_{i,0}])\right)=[r_{i,0}]-[s_{i,0}]\]
for all $i=1,\ldots,k_0$, and
\[\pi\left(K_0(\iota)([r_{j}]-[s_{j}])\right)=[r_{j}]-[s_{j}]\]
for all $j=1,\ldots,m$.

Fix $i\in \{1,\ldots,k_0\}$. Then $K_0(\iota)([r_{i,0}]-[s_{i,0}])=n_i([p_{i,0}]-[q_{i,0}])$ in $K_0(A)$, so
we shall prove that
\[n_i([\widetilde{p}_{i,0}]-[\widetilde{q}_{i,0}])=[r_{i,0}]-[s_{i,0}]\]
in $K_0(A^\alpha)$. The following estimate can be shown in a way similar to what was done in the proof
of Claim 2:
\[\left\|\widetilde v_{j} \diag(s_{i,0},\underbrace{\widetilde{p}_{i,0},\ldots,\widetilde{p}_{i,0}}_{n_i\text{ times}}) \widetilde v_{j}^*-
\diag(\widetilde r_{i,0},\underbrace{\widetilde{q}_{i,0},\ldots,\widetilde{q}_{i,0}}_{n_i\text{ times}}) \right\|<1.
\]

Again, it follows from Lemma~2.5.3 in \cite{Lin_introduction_2001} that
$\widetilde v_{j} \diag(s_{i,0},\widetilde p_{i,0},\ldots,\widetilde p_{i,0}) \widetilde v_{j}^*$ is Mur\-ray-von Neumann equivalent to
$\diag(\widetilde r_{i,0},q_{i,0},\ldots,\widetilde q_{i,0})$ in $A^\alpha$, and the claim is proved.
The argument for $[r_j]-[s_j]$, for $j=1,\ldots,m$, is analogous.
\end{proof}

It follows that certain features of the $K$-groups of $A$ are inherited by the $K$-groups
of $A^\alpha$ and $A\rtimes_\alpha G$:

\begin{cor}\label{cor:K-thyAtoAG}
Let $A$ be a unital \ca, let $G$ be a second-countable compact group, and let
$\alpha\colon G\to\Aut(A)$ be an action with the \Rp. Let $j\in \{0,1\}$ and suppose
that $K_j(A)$ is either:
\be\item free;
\item torsion;
\item torsion-free;
\item finitely generated;
\item zero.\ee
Then the same holds for $K_j(A^\alpha)$ and $K_j(A\rtimes_\alpha G)$.
\end{cor}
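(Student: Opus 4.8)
The plan is to split the statement into its two halves and reduce the crossed-product case to the fixed-point case. First I would invoke the natural identification $K_\ast(A\rtimes_\alpha G)\cong K_\ast(A^\alpha)$ that holds for actions with the Rokhlin property (coming from the Morita equivalence $A\rtimes_\alpha G\sim A^\alpha$, the action being saturated; this is exactly the identification underlying the crossed-product formulation of \autoref{thm:InjK-Thy} advertised in the introduction). Under this isomorphism, $K_j(A\rtimes_\alpha G)$ and $K_j(A^\alpha)$ are abstractly isomorphic abelian groups, so every one of the five purely group-theoretic properties holds for one if and only if it holds for the other. Hence it suffices to prove the assertion for the fixed point algebra $A^\alpha$.

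For $A^\alpha$ the whole argument rests on part~(1) of \autoref{thm:InjK-Thy}: the inclusion $\iota\colon A^\alpha\to A$ induces an \emph{injective} map $K_j(\iota)\colon K_j(A^\alpha)\to K_j(A)$, so that $K_j(A^\alpha)$ is isomorphic to a subgroup of $K_j(A)$. The observation to make is that each of the five properties is inherited by subgroups of abelian groups: a subgroup of a free abelian group is free; a subgroup of a torsion group is torsion, and a subgroup of a torsion-free group is torsion-free; a subgroup of a finitely generated abelian group is again finitely generated (finitely generated abelian groups are Noetherian $\Z$-modules); and a subgroup of the trivial group is trivial. I would simply run through these five cases in turn, in each one using injectivity of $K_j(\iota)$ to realize $K_j(A^\alpha)$ as a subgroup of $K_j(A)$ and then quoting the corresponding closure-under-subgroups fact. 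Transporting the conclusion through the isomorphism of the first paragraph then gives the statement for $A\rtimes_\alpha G$ as well.

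It is worth emphasizing that neither the order-embedding property (part~(2)) nor the local splitting (part~(3)) of \autoref{thm:InjK-Thy} is required here: all five properties are insensitive to the order structure and are stable under passage to subgroups, so injectivity alone does the work. Consequently there is no serious obstacle in the fixed-point case. The one genuine input is the K-theoretic bridge $K_\ast(A\rtimes_\alpha G)\cong K_\ast(A^\alpha)$, since these structural properties of $K_j$ are not directly visible from the crossed product itself; once that identification is in hand, the crossed-product case is immediate from the fixed-point case.
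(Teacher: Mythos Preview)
Your argument is correct and matches the paper's approach: the paper's proof is the single line ``This follows immediately from part~(1) of~\autoref{thm:InjK-Thy},'' and you have simply unpacked what that means, including making explicit the Morita equivalence $A\rtimes_\alpha G\sim A^\alpha$ (used later in the paper in \autoref{cor:K-thyCuCP} via Proposition~2.4 of \cite{Gar_crossed_2014}) that handles the crossed-product half. Your observation that only injectivity is needed, not the order-embedding or the local splitting, is exactly right.
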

\begin{proof}
This follows immediately from part~(1) of~\autoref{thm:InjK-Thy}.
\end{proof}

\begin{cor}
Let $A$ be a unital, separable \ca, let $G$ be a second-countable compact group, and let
$\alpha\colon G\to\Aut(A)$ be an action with the \Rp. Suppose that $K_j(A)$ is finitely generated
for some $j\in \{0,1\}$. Then $K_j(A^\alpha)$ is isomorphic to a direct summand in $K_j(A)$.
\end{cor}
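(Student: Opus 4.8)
The plan is to deduce this directly from part~(3) of \autoref{thm:InjK-Thy}, applied to the finitely generated subgroup $H=K_j(A)$ itself. Since $K_j(A)$ is assumed finitely generated, this is a legitimate choice, and part~(3) then produces a group homomorphism $\pi\colon K_j(A)\to K_j(A^\alpha)$ whose composite $\pi\circ K_j(\iota)$ restricts to the identity on $K_j(\iota)^{-1}(H)$.

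The first thing I would observe is that, with $H=K_j(A)$, the preimage $K_j(\iota)^{-1}(H)$ is all of $K_j(A^\alpha)$, since every element of $K_j(A^\alpha)$ is automatically carried into $K_j(A)=H$ by $K_j(\iota)$. Thus the local splitting furnished by part~(3) upgrades to the global identity
\[\pi\circ K_j(\iota)=\id_{K_j(A^\alpha)},\]
so that $\pi$ is a genuine left inverse of $K_j(\iota)$.

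The remaining step is the elementary fact that a homomorphism of abelian groups admitting a left inverse is a split monomorphism. Writing $f=K_j(\iota)$, each $x\in K_j(A)$ decomposes as $x=f(\pi(x))+\bigl(x-f(\pi(x))\bigr)$, where the first summand lies in $\im(f)$ and the second lies in $\ker(\pi)$; moreover $\im(f)\cap\ker(\pi)=0$, because $f(y)\in\ker(\pi)$ forces $y=\pi(f(y))=0$. Hence $K_j(A)=\im(f)\oplus\ker(\pi)$. Since $f=K_j(\iota)$ is injective by part~(1) of \autoref{thm:InjK-Thy}, it restricts to an isomorphism of $K_j(A^\alpha)$ onto $\im(f)$, exhibiting $K_j(A^\alpha)$ as a direct summand of $K_j(A)$, as required.

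In truth there is essentially no obstacle at this stage: all of the real work resides in part~(3) of \autoref{thm:InjK-Thy}, which supplies the splitting. The only points meriting verification are that taking $H$ to be the whole (finitely generated) group collapses the local statement into a global one, together with the standard homological observation that a split monomorphism of abelian groups has its image as a direct summand.
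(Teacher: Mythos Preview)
Your proof is correct and follows essentially the same approach as the paper: apply part~(3) of \autoref{thm:InjK-Thy} with $H=K_j(A)$ to obtain a global section $\pi$ of $K_j(\iota)$, and then invoke the elementary fact that a section of a monomorphism of abelian groups yields a direct sum decomposition. The paper states this last step as ``a short exact sequence $0\to G_1\to G_2\to G_3\to 0$ of abelian groups with a section $G_2\to G_1$ must split,'' which is exactly the split-monomorphism observation you spell out.
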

\begin{proof}
This is consequence of part (3) of \autoref{thm:InjK-Thy}, together with the fact that a short
exact sequence $0\to G_1\to G_2\to G_3\to 0$ of abelian groups with a section $G_2\to G_1$ must
split.
\end{proof}

In the case of Rokhlin actions of finite groups on simple \ca s, Izumi showed in Theorem~3.13 in~\cite{Izu_finiteI_2004} that the 
image of $K_0(\iota)\colon K_0(A^\alpha)\to K_(A)$ can be computed quite explicitly as follows: 
\[K_\ast(\iota)(K_\ast(A^\alpha))=\{x\in K_\ast(A)\colon K_\ast(\alpha_g)(x)=x \mbox{ for all } g\in G\}.\]

Although $K_\ast(\iota)$ is also injective for Rokhlin actions of arbitrary compact groups (part~(1) of~\autoref{thm:InjK-Thy}),
a formula analogous to the one above is not true in general:

\begin{eg}
Let $A$ be a unital Kirchberg algebra satisfying the UCT, with $K$-theory given by $K_0(A)\cong K_1(A)\cong\Z_6$,
with the class of the unit in $K_0(A)$ corresponding to $3\in \Z_6$. By
Theorem~5.3 in \cite{Gar_classificationII_2014}, there exists a circle action $\alpha\colon \T\to\Aut(A)$ with the Rokhlin
property, such that $K_0(A^\alpha)\cong \Z_2$, with the class of the unit of $A^\alpha$ corresponding to $1\in \Z_2$,
and $K_1(A^\alpha)\cong \Z_3$.

By Proposition~3.9 in \cite{Gar_classificationI_2014}, if $\zeta\in\T$ and $j\in \{0,1\}$,
then $K_j(\alpha_\zeta)=\id_{K_j(A)}$. In particular,
\[K_j(\iota)(K_j(A^\alpha))\cong \Z_{j+2}\ncong \Z_6\cong \{x\in K_j(A)\colon K_j(\alpha_\zeta)(x)=x \mbox{ for all } \zeta\in \T\}.\]
\end{eg}

What goes wrong in the example above (the notation of which we keep), is that if $p$ is a projection in $A$ such that
$\alpha_\zeta(p)$ is unitarily equivalent to $p$ for all $\zeta\in \T$, then the unitaries $u_\zeta$, for
$\zeta\in\T$, which
implement the unitary equivalence, cannot in general be chosen to depend continuously on $\zeta$.

The splitting constructed in part (3) of \autoref{thm:InjK-Thy} is not natural with respect to equivariant
homomorphisms between $G$-algebras with the Rokhlin property. On the other hand, the splitting \emph{can}
be shown to be natural with respect to certain maps; see \autoref{thm:NaturalitySplitting}. The main
ingredient is the following relative version of \autoref{thm:ApproxHomFixingFPA}. Since its assumptions
are long and are needed also in \autoref{thm:NaturalitySplitting}, we describe them separately. 
\ \\

\textbf{General setting:} 
Let $B$ be \uca\ and let $A$ be a unital subalgebra of $B$ (with the same unit), and let $\mu\colon A\to B$
be its canonical inclusion. Let $G$ be a compact group, and let
$\alpha\colon G\to\Aut(A)$ and $\beta\colon G\to\Aut(B)$ be actions with the Rokhlin property.
Let $\tau\colon G\to G$ be a surjective group homomorphism satisfying
\[\beta_{g}(a)=\alpha_{\tau(g)}(a)\]
for all $g\in G$ and all $a\in A$. (Note, in particular, that $\mu$ restricts to the inclusion
$A^\alpha\subseteq B^\beta$.) Write $\iota_A\colon A^\alpha\to A$ and $\iota_B\colon B^\beta\to B$
for the canonical inclusions.

Suppose that there are unital, equivariant homomorphisms
\[\varphi_A\colon C(G)\to A_{\I,\alpha}\cap B'\subseteq A_{\I,\alpha}\cap A' \
\mbox{ and } \ \varphi_B\colon C(G)\to B_{\I,\beta}\cap B'\]
making the following diagram commute:
\beqa\xymatrix{
C(G)\ar[r]^-{\tau^*}\ar[d]_-{\varphi_A} &C(G)\ar[d]^-{\varphi_A}\\
A_{\I,\alpha}\cap B'\ar@{^{(}->}[r] &B_{\I,\beta}\cap B'.
}\eeqa

(A typical situation in which the above assumptions are met, is as follows. Let $\beta\colon \T\to\Aut(B)$
be a Rokhlin action, and let $\tau\colon \T\to\T$ be the square map. Let $A$ be the subalgebra of $B$
of elements that are fixed by $\{1,-1\}\subseteq \T$, and let $\alpha\colon G\to\Aut(A)$ be the induced
action. It can be seen that $\alpha$ has the Rokhlin property, and that there exist maps $\varphi_A$ and 
$\varphi_B$ as above; this is done with detail in \cite{Gar_automatic_2017}.) 

\begin{prop}\label{prop:relative}
Adopt the assumptions and notation in the general setting described above.
Let $F_A\subseteq A$ and $F_B\subseteq B$ be compact subsets with $F_A\subseteq F_B$, and let $\ep>0$.
Then there exist unital, continuous, linear maps $\psi_A\colon A\to A^\alpha$ and $\psi_B\colon B\to B^\beta$,
satisfying the following conditions:
\be
\item $\|\psi_B(bc)-\psi(b)\psi(c)\|<\ep$ for all $b,c\in F_B$;
\item $\|\psi_B(b^*)-\psi_B(b)^*\|<\ep$ for all $b\in F_B$;
\item $\|\psi_B(b)\|\leq 2\|b\|$ for all $b\in F_B$; and
\item $\psi_B(b)=b$ for all $b\in B^\beta$.
\item $\psi_B|_A=\psi_A$.
\ee
\end{prop}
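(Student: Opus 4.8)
The plan is to build $\psi_B$ by applying \autoref{thm:ApproxHomFixingFPA} to the pair $(B,\beta)$, and then to \emph{define} $\psi_A:=\psi_B|_A$; conditions (1)--(4) are then immediate, and (5) holds by construction. Concretely, for a finite partition of unity $(h_l)_{l}$ in $C(G)$ subordinate to a fine cover of $G$, together with the $\beta$-Rokhlin map $\varphi_B$, the map has the form $\psi_B(b)=E_B\big(\sum_l\varphi_B(h_l)\beta_l(b)\big)$, where $E_B\colon B\to B^\beta$ is the canonical conditional expectation. The only substantial point is that this $\psi_B$ carries $A$ into $A^\alpha$: once this is shown, the equality $\psi_B|_A=\psi_A$ with $\psi_A\colon A\to A^\alpha$ is automatic, and the restriction inherits (1)--(4) on $F_A$ from $\psi_B$ since $F_A\subseteq F_B$. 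Note first that $A\cap B^\beta=A^\alpha$, because $\tau$ is surjective; hence it suffices to prove $\psi_B(A)\subseteq A$.

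The key step is a disintegration of $E_B$ along $N:=\ker\tau$. Since $N$ is a compact normal subgroup, Haar measure on $G$ disintegrates over $G/N$, giving $E_B=E_{G/N}\circ E_N$, where $E_N(x)=\int_N\beta_s(x)\,ds$ is the conditional expectation onto $B^{\beta|_N}\supseteq A$. Fix $a\in A$. For $s\in N$ we have $\beta_s(a)=\alpha_{\tau(s)}(a)=a$, and likewise $\beta_l(a)=\alpha_{\tau(l)}(a)\in A$ is $\beta|_N$-fixed, so these factors pull out of $E_N$:
\[
E_N\Big(\sum_l\varphi_B(h_l)\beta_l(a)\Big)=\sum_l E_N\big(\varphi_B(h_l)\big)\,\alpha_{\tau(l)}(a).
\]
By equivariance of $\varphi_B$ one has $E_N(\varphi_B(h_l))=\varphi_B(\bar h_l)$, where $\bar h_l=\int_N\texttt{Lt}_s h_l\,ds$ is $N$-invariant and therefore lies in $\tau^*(C(G))$, say $\bar h_l=\tau^*(\widetilde h_l)$. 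The commuting diagram of the general setting now gives $\varphi_B(\bar h_l)=\varphi_A(\widetilde h_l)\in A$. Consequently the inner sum lies in $A$, and applying $E_{G/N}$---which restricts on $A$ to the $\alpha$-expectation $E_A$, since $\beta|_A$ factors through $\tau$ and $G/N\cong G$---yields $\psi_B(a)=E_{G/N}\big(\sum_l\varphi_A(\widetilde h_l)\,\alpha_{\tau(l)}(a)\big)\in A^\alpha$, as desired.

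The main obstacle is precisely the tension this argument resolves: to satisfy (1)--(4) on $F_B$ one needs a genuinely \emph{fine} $\beta$-Rokhlin tower (so that $\beta_l\approx\beta_{l'}$ on the surviving cross terms), yet no such tower can have coefficients in $A$, because the $\tau$-twist makes the coefficient tower shift by $\tau(g)$ while the index tower shifts by $g$; an $A$-valued, $N$-blind average therefore cannot be multiplicative on $B\setminus A$. The disintegration along $N$ circumvents this by requiring the coefficients to become $A$-valued only \emph{after} the $N$-average, which is exactly what the diagram delivers through $\tau^*(C(G))\cong C(G/N)$. The one remaining technical point is that the displayed identities use the exact equivariance of $\varphi_B$; to produce the honest finite maps of the statement one argues as in the proof of \autoref{thm:ApproxHomFixingFPA}, choosing the finite $\beta$-Rokhlin unital completely positive approximation so that its $N$-average on $\tau^*(C(G))$ coincides with a fixed $A$-valued lift of $\varphi_A$ (possible because $\varphi_A$ already takes values in $A_{\I,\alpha}\cap B'$). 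With this choice the landing $\psi_B(A)\subseteq A^\alpha$ and the identity $\psi_B|_A=\psi_A$ hold on the nose, while (1)--(4) hold to within $\ep$.
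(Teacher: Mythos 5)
Your route is genuinely different from the paper's. The paper does \emph{not} apply \autoref{thm:ApproxHomFixingFPA} to $(B,\beta)$ and then restrict; instead it uses the Choi--Effros theorem to lift $\varphi_A$ to a single unital completely positive map $\varphi\colon C(G)\to A$ (exploiting that $\varphi_A$ takes values in $A_{\I,\alpha}\cap B'$), and builds \emph{both} maps from this one $A$-valued tower, setting $\psi_A(a)=\int_G\alpha_g\big(\sum_k\varphi(f_k)\alpha_{\tau(k)}(a)\big)\,dg$ and $\psi_B(b)=\int_G\beta_g\big(\sum_k\varphi(f_k)\beta_k(b)\big)\,dg$; condition (5) is then automatic, since for $a\in A$ the two sums agree termwise and $\tau$ pushes Haar measure to Haar measure. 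The trade-off is exactly the tension you name in your ``main obstacle'' paragraph: an $A$-valued, $N$-blind tower shifts its coefficients by $\tau(g)$ while the index shifts by $g$, so $\sum_k\varphi(f_k)\beta_k(b)$ is not approximately $\beta$-invariant for $b\notin A$, and the multiplicativity estimate for $\psi_B$ breaks down. This is not hypothetical: for $B=C(\T)$, $\beta=\texttt{Lt}$, $\tau(z)=z^2$, $A=B^{\ker\tau}$, and \emph{any} choice of $P_k=\varphi(f_k)\in A$ with $\sum_k P_k=1$, the paper's formula gives $\psi_B(z)=0$ exactly for the canonical unitary $z\in C(\T)$, while $\psi_B(z^*z)=1$, so that $\psi_B$ is far from multiplicative. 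Your construction---$\psi_B$ built from the genuine $\beta$-tower $\varphi_B$, with $\psi_B(A)\subseteq A^\alpha$ obtained by disintegrating $E_B=E_{G/N}\circ E_N$ and pushing the $N$-averaged coefficients through $\varphi_B\circ\tau^*=\varphi_A$---is the mechanism that makes (1)--(4) unproblematic, and your limit-level computation (including $A\cap B^\beta=A^\alpha$ and $E_{G/N}|_A=E_A$) is correct.

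The genuine gap is in your very last step, where exactness of (5) must survive the passage to finite stages. What your disintegration needs is $E_N\big(\varphi^{(n)}(h_l)\big)\in A$ for the \emph{actual partition functions} $h_l$, where $\varphi^{(n)}\colon C(G)\to B$ is the finite-stage lift of $\varphi_B$; your proposed remedy only pins down the $N$-average of $\varphi^{(n)}$ on the subalgebra $\tau^*(C(G))$. Since the $h_l$ are fine they are not $N$-invariant, so $h_l\notin\tau^*(C(G))$, and because $\varphi^{(n)}$ is only approximately equivariant you cannot transport the matching from $\tau^*(C(G))$ to the $h_l$; what the argument then yields is that $\psi_B(a)$ is \emph{close} to $A^\alpha$, not that it lies in it, so (5) as an exact identity is not established in the stated generality. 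You can partially repair this: averaging any lift over the compact group $N$ produces a lift that is exactly $N$-equivariant, whence $E_N\big(\varphi^{(n)}(h_l)\big)=\varphi^{(n)}(\tau^*\tilde h_l)$; the remaining requirement is that $\varphi^{(n)}$ be exactly $A$-valued on $\tau^*(C(G))$, a relative lifting statement that still needs proof (the naive correction of a lift by subtracting its $N$-average and adding an $A$-valued one destroys complete positivity). Note that this requirement holds automatically precisely in the paper's ``typical situation'' $A=B^{\beta|_N}$, where $E_N$ takes values in $A$ by definition and your proof closes with no special choices; in the stated generality you must either prove the compatible-lift statement or settle for an approximate form of (5), which is in fact all that the application in \autoref{thm:NaturalitySplitting} uses.
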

\begin{proof}
Using Choi-Effros lifting theorem for the map $\varphi_A$ as in
Proposition~4.3 in \cite{Gar_regularity_2014}, find a positive number $\delta>0$, a
finite subset $K\subseteq G$, a partition of unity $(f_k)_{k\in K}$ in $C(G)$, and a
unital completely positive map $\varphi\colon C(G)\to A$, such that the following conditions hold:
\be\item[(a)] If $g,g'\in G$ satisfy $d(g,g')<\delta$, then $\|\alpha_g(a)-\alpha_{g'}(a)\|<\ep_0$
for all $a\in F_A$ and $\|\beta_g(b)-\beta_{g'}(b)\|<\ep_0$
for all $b\in F_B$.
\item[(b)] Whenever $k$ and $k'$ in $K$ satisfy $f_kf_{k'}\neq 0$, then $d(k,k')<\delta$.
\item[(c)] For every $g\in G$ and for every $a\in F_A\cup F_A^*$, we have
\[\left\| \ \alpha_g\left(\sum\limits_{k\in K} \varphi(f_{k})\alpha_{\tau(k)}(a)\right)-
\sum\limits_{k\in K} \varphi(f_{k})\alpha_{\tau(k)}(a)\right\|<\varepsilon_0.\]
\item[(c')] For every $g\in G$ and for every $b\in F_B\cup F_B^*$, we have
\[\left\| \ \beta_g\left(\sum\limits_{k\in K} \varphi(f_{k})\beta_{k}(b)\right)-
\sum\limits_{k\in K} \varphi(f_{k})\beta_{k}(b)\right\|<\varepsilon_0.\]
\item[(d)] For every $a\in F_A\cup F_A^*$ and for every $k\in K$, we have
\[\left\|a\varphi(f_{k})-\varphi(f_{k})a\right\|<\frac{\ep_0}{|K|} \ \mbox{ and } \
\left\|a\varphi(f_{k})^{\frac{1}{2}}-\varphi(f_{k})^{\frac{1}{2}}a\right\|<\frac{\ep_0}{|K|}.\]
\item[(d')] For every $b\in F_B\cup F_B^*$ and for every $k\in K$, we have
\[\left\|b\varphi(f_{k})-\varphi(f_{k})b\right\|<\frac{\ep_0}{|K|} \ \mbox{ and } \
\left\|b\varphi(f_{k})^{\frac{1}{2}}-\varphi(f_{k})^{\frac{1}{2}}b\right\|<\frac{\ep_0}{|K|}.\]
\item[(e)] Whenever $k$ and $k'$ in $K$ satisfy $f_{k}f_{k'}=0$, then
\[\left\|\varphi(f_k)\varphi(f_{k'})\right\|<\frac{\ep_0}{|K|}.\]
\ee

Define unital linear maps $\psi_A\colon A\to A^\alpha$ and $\psi_B\colon B\to B^\beta$ by
\[\psi_A(a)=\int_G\alpha_g\left(\sum_{k\in K}\varphi(f_k)\alpha_{\tau(k)}(a)\right)dg \ \mbox{ and } \
\psi_B(b)=\int_G\beta_g\left(\sum_{k\in K}\varphi(f_k)\beta_k(b)\right)dg\]
for all $a\in A$ and all $b\in B$. Properties (4) and (5) in the statement are immediate, while
the remaining ones are checked as in the proof of \autoref{thm:ApproxHomFixingFPA}. We omit the
details.
\end{proof}

Naturality of the splitting in part~(3) of \autoref{thm:InjK-Thy}, for homomorphisms as in
the proposition above, follows immediately by using \autoref{prop:relative} in the proof of
\autoref{thm:InjK-Thy}. Despite the seemingly restrictive assumptions, this version of naturality
turns out to be crucial in the proof of the main result in \cite{Gar_automatic_2017}.

\begin{thm}\label{thm:NaturalitySplitting}
Adopt the assumptions and notation in the general setting described before \autoref{prop:relative}.
Let $j\in \{0,1\}$ and let $H$ be a finitely generated subgroup of $K_j(A)$. Set $\widetilde{H}=K_j(\mu)(H)$,
which is a finitely generated subgroup of $K_j(B)$. Then there exist group homomorphisms
$\pi_A\colon H\to K_0(A^\alpha)$ and $\pi_B\colon \widetilde{H}\to K_0(B^\beta)$, satisfying
\[\pi_A\circ K_j(\iota_A)|_{K_j(\iota_A)^{-1}(H)}=\id_{K_j(\iota_A)^{-1}(H)}\]
and
\[\pi_B\circ K_j(\iota_B)|_{K_j(\iota_B)^{-1}(\widetilde{H})}=\id_{K_j(\iota_B)^{-1}(\widetilde{H})},\]
which moreover make the following diagram commute:
\beqa\xymatrix{
K_j(A^\alpha)\ar@/^1.75pc/[rr]^-{K_j(\iota_A)}\ar[d]_-{K_j(\mu)} & H \ar[d]^-{K_j(\mu)}\ar[l]_-{\pi_A}\ar@{^{(}->}[r]& K_j(A)\ar[d]^-{K_j(\mu)}\\
K_j(B^\beta)\ar@/_1.75pc/[rr]_-{K_j(\iota_B)} & \widetilde{H}\ar[l]^-{\pi_B}\ar@{^{(}->}[r]&K_j(B).}\eeqa
\end{thm}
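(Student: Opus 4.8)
The plan is to run the argument of \autoref{thm:InjK-Thy}(3) simultaneously for $A$ and $B$, using the single pair of linear maps $\psi_A,\psi_B$ furnished by \autoref{prop:relative} in place of the single map $\psi$ coming from \autoref{thm:ApproxHomFixingFPA}. The whole point of \autoref{prop:relative} is that $\psi_B|_A = \psi_A$, so that every projection, unitary, and $K$-theory generator that gets perturbed in the construction for $A$ is perturbed compatibly inside $B$. First I would fix a finitely generated subgroup $H\subseteq K_j(A)$ and realize its generators by projections and unitaries over $A$, exactly as in the proof of \autoref{thm:InjK-Thy}(3). Because $\mu$ is a unital inclusion and $K_j(\mu)$ is a homomorphism, these same elements, viewed in $B$, realize generators of $\widetilde{H}=K_j(\mu)(H)$; I would take the compact set $F_A\subseteq A$ built from the $A$-data and enlarge it to a compact set $F_B\subseteq B$ with $F_A\subseteq F_B$, adding whatever extra $B$-projections and $B$-unitaries are needed to realize the generators and relations defining $\widetilde{H}$ that do not already come from $H$.

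Next I would invoke \autoref{prop:relative} with these $F_A\subseteq F_B$ and a tolerance $\ep$ chosen as in the proof of \autoref{thm:InjK-Thy}(3) (small enough to apply the standard projection- and unitary-stability estimates, e.g.\ $\ep=\tfrac1{10}$ together with the $\delta_1,\delta_2$ coming from Lemma~2.5.3 in \cite{Lin_introduction_2001}). From $\psi_B$ I would define $\pi_B\colon\widetilde{H}\to K_0(B^\beta)$ exactly as $\pi$ is defined from $\psi$ in \autoref{thm:InjK-Thy}(3); since $\psi_B$ satisfies conditions (1)--(4) of that theorem verbatim (they are (1)--(4) of \autoref{prop:relative}), the same computations show $\pi_B$ is a well-defined homomorphism with $\pi_B\circ K_j(\iota_B)|_{K_j(\iota_B)^{-1}(\widetilde H)}=\id$. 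Applying the identical reasoning to $\psi_A$ produces $\pi_A$ with the corresponding splitting property; this is literally \autoref{thm:InjK-Thy}(3) for $A$, but now realized using \emph{the same} perturbed projections $\widetilde p,\widetilde q$ and unitaries $\widetilde u,\widetilde v$, since these depend only on $\psi_A=\psi_B|_A$.

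The key point, and the only genuinely new content beyond \autoref{thm:InjK-Thy}, is commutativity of the square. The outer rectangle (involving $K_j(\iota_A)$, $K_j(\iota_B)$, and the two vertical copies of $K_j(\mu)$) commutes for the trivial reason that $\iota_B\circ(\text{inclusion }A^\alpha\hookrightarrow B^\beta)=\mu\circ\iota_A$ at the level of $C^*$-algebras, which holds because $\mu$ restricts to $A^\alpha\subseteq B^\beta$. The left square, $K_j(\mu)\circ\pi_A=\pi_B\circ K_j(\mu)|_H$, is what requires the compatibility $\psi_B|_A=\psi_A$: I would check it on generators of $H$, observing that $\pi_A$ sends a generator $[p]-[q]$ (with $p,q$ over $A$) to $[\widetilde p]-[\widetilde q]$ in $K_0(A^\alpha)$, where $\widetilde p,\widetilde q$ are the projections extracted from $\psi_A(p),\psi_A(q)$; pushing forward by $K_j(\mu)$ and using $\psi_A=\psi_B|_A$ shows these coincide with the projections $\pi_B$ extracts from $\psi_B(p),\psi_B(q)$, so $K_j(\mu)(\pi_A([p]-[q]))=\pi_B(K_j(\mu)([p]-[q]))$.

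The main obstacle I anticipate is purely bookkeeping rather than conceptual: one must choose the generating projections and unitaries for $H$ and $\widetilde H$ coherently, so that the perturbation $\psi_A$ applied to the $A$-generators yields, after inclusion into $B$, precisely the perturbed elements used to define $\pi_B$ on the image generators. This is guaranteed by $\psi_B|_A=\psi_A$, but it forces one to set up $\pi_B$ on $\widetilde H$ via the \emph{same} realizing data (rather than choosing independent representatives in $B$), and to verify that the subgroup-index quantities $n_i,\ell_j$ controlling the splitting behave compatibly under $K_j(\mu)$. Once the generators are matched up this way, every estimate is identical to the ones already carried out in the proof of \autoref{thm:InjK-Thy}(3), so I would simply refer to that proof rather than reproduce the computations.
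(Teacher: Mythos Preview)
Your proposal is correct and matches the paper's approach exactly: the paper states that the result ``follows immediately by using \autoref{prop:relative} in the proof of \autoref{thm:InjK-Thy}'' and gives no further details, which is precisely the strategy you outline. Your elaboration of the bookkeeping (matching generators, using $\psi_B|_A=\psi_A$ to verify commutativity of the left square) fills in what the paper leaves implicit.
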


We record the following result for later use.
The conclusion in part~(2) explains why it was possible in~\cite{Gar_classificationI_2014}
to classify circle actions with the Rokhlin property on Kirchberg algebras up to conjugacy, using
a cocycle conjugacy invariant (namely the equivariant $K$-theory).
In its proof, we will need to use the 1-cohomology set $H^1_\alpha(G,\U(A))$ of an action
$\alpha\colon G\to\Aut(A)$ of a locally group $G$ on a \uca\ $A$; see, for example, Subsection~2.1
in~\cite{Izu_finiteI_2004}.

\begin{prop}
Let $A$ be a separable \uca, let $G$ be a second-countable compact group, let
$\alpha,\beta\colon G\to\Aut(A)$ be actions. Assume that $\alpha$ has the Rokhlin property,
and that $\alpha$ and $\beta$ are exterior conjugate.
\be\item Then $\beta$ has the Rokhlin property.
\item If $A$ is moreover simple and purely infinite, then there exists a unitary $w\in \U(A)$ such that
$\Ad(w)\circ\alpha_g\circ\Ad(w^*)=\beta_g$ for all $g\in G$. That is, $\alpha$ and $\beta$ are conjugate
via an inner automorphism of $A$.
\ee
\end{prop}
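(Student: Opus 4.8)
The plan is to use that $\alpha$ and $\beta$ being exterior conjugate means there is a norm-continuous $\alpha$-cocycle $(w_g)_{g\in G}$ in $\U(A)$, so that $w_{gh}=w_g\alpha_g(w_h)$ and $\beta_g=\Ad(w_g)\circ\alpha_g$ for all $g\in G$. For part~(1), I would show that the \emph{same} unital equivariant homomorphism $\varphi\colon C(G)\to A_{\I,\alpha}\cap A'$ witnessing the Rokhlin property of $\alpha$ also witnesses it for $\beta$. On $A_\I$ we have $(\beta_\I)_g=\Ad(w_g)\circ(\alpha_\I)_g$, where each $w_g$ is viewed as a constant sequence; since the image of $\varphi$ lies in the relative commutant $A'$ while $w_g\in A$, for every $f\in C(G)$ one computes
\[(\beta_\I)_g(\varphi(f))=w_g\,(\alpha_\I)_g(\varphi(f))\,w_g^*=w_g\,\varphi(\texttt{Lt}_g(f))\,w_g^*=\varphi(\texttt{Lt}_g(f)).\]
This simultaneously shows that $\varphi$ is $\beta_\I$-equivariant, that $g\mapsto(\beta_\I)_g(\varphi(f))$ is continuous (so the image lands in $A_{\I,\beta}$), and that the image remains central; hence $\varphi\colon C(G)\to A_{\I,\beta}\cap A'$ verifies \autoref{df:Rp} for $\beta$.

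For part~(2) the goal is to show that the class of $(w_g)$ in $H^1_\alpha(G,\U(A))$ is trivial, i.e.\ that the cocycle is a \emph{coboundary} $w_g=w\,\alpha_g(w^*)$ for some $w\in\U(A)$; the asserted identity $\beta_g=\Ad(w)\circ\alpha_g\circ\Ad(w^*)$ is then immediate. The first step produces an \emph{approximate} coboundary from the Rokhlin datum. Let $\tilde w\in\U(C(G)\otimes A)\cong\U(C(G,A))$ be the unitary $g\mapsto w_g$, and let $\Phi\colon C(G)\otimes A\to A_{\I,\alpha}$ be the unital homomorphism determined by $\Phi(f\otimes a)=\varphi(f)a$ (well defined since $\varphi(C(G))\subseteq A'$ commutes with $A$). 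Put $v=\Phi(\tilde w)\in\U(A_{\I,\alpha})$. Using $(\alpha_\I)_g\circ\Phi=\Phi\circ(\texttt{Lt}_g\otimes\alpha_g)$ on elementary tensors together with the cocycle identity in the form $\alpha_g(w_{g^{-1}k})=w_g^*w_k$, one gets $(\alpha_\I)_g(v)=w_g^*v$, that is,
\[w_g=v\,(\alpha_\I)_g(v^*)\qquad\text{for all }g\in G.\]

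The second step upgrades this to an exact coboundary. I would lift $v$ to a sequence of unitaries $(v_n)\in\ell^\I_\alpha(\N,A)$; since $v\in A_{\I,\alpha}$, the maps $g\mapsto\alpha_g(v_n)$ are equicontinuous in $n$, so by the Dini/equicontinuity argument used to reconcile the definitions of the Rokhlin property one obtains that $\sup_{g\in G}\|w_g-v_n\alpha_g(v_n^*)\|\to 0$. Fixing $n$ large, the conjugated cocycle $w'_g=v_n^*\,w_g\,\alpha_g(v_n)$ satisfies $\sup_{g\in G}\|w'_g-1\|<1$. I would then average over $G$: set $b=\int_G w'_g\,dg$. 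Left-invariance of Haar measure and the cocycle identity yield $\alpha_h(b)=(w'_h)^*b$, so $\alpha_h(b^*b)=b^* w'_h (w'_h)^* b=b^*b$, i.e.\ $(b^*b)^{1/2}\in A^\alpha$; since $\|b-1\|<1$, $b$ is invertible, and its unitary part $u=b(b^*b)^{-1/2}$ satisfies $w'_g=u\,\alpha_g(u^*)$ exactly. Unwinding the conjugation gives $w_g=(v_n u)\,\alpha_g((v_n u)^*)$, so $w=v_n u\in\U(A)$ is the required unitary.

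The main obstacle is exactly this passage from an \emph{approximate} coboundary living in $A_\I$ to an \emph{exact} one realized by a single unitary of $A$. Two ingredients make it work: the \emph{uniform} (not merely pointwise) estimate over the compact group $G$, which rests on the continuity built into $A_{\I,\alpha}$, and the Haar-averaging trick, which turns a cocycle sufficiently close to the trivial one into an honest coboundary. I expect the only delicate bookkeeping to be the uniform estimate and the verification that $b^*b\in A^\alpha$. Finally, I would note that the purely infinite simple hypothesis is presumably present for the intended classification application (and to place us in the $H^1_\alpha(G,\U(A))$ setting of \cite{Izu_finiteI_2004}); the argument above seems to use only the Rokhlin property and compactness of $G$, so if one preferred to invoke perturbation results specific to Kirchberg algebras, that would be the natural point to do so.
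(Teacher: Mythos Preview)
Your proof of part~(1) is correct and essentially identical to the paper's: both observe that the cocycle $w_g\in A$ acts trivially by conjugation on the central sequence algebra, so $\alpha$ and $\beta$ induce the same action there.

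For part~(2) you take a genuinely different route. The paper does not construct an approximate coboundary directly; instead it quotes Proposition~2.5 of \cite{Izu_finiteI_2004}, which says that $H^1_\alpha(G,\U(A))=\{0\}$ whenever $A$ and $A\rtimes_\alpha G$ are simple purely infinite, $\alpha$ is outer, and $K_0(\iota)$ is injective. The paper then verifies these hypotheses using Theorem~4.14 of \cite{Gar_rokhlin_2017} (outerness), results from \cite{Gar_crossed_2014} (simplicity and pure infiniteness of the crossed product), and part~(1) of \autoref{thm:InjK-Thy} (injectivity of $K_0(\iota)$). Your argument bypasses all of this: you use the Rokhlin map $\varphi$ to build a unitary $v\in A_{\I,\alpha}$ with $w_g=v(\alpha_\I)_g(v^*)$, lift to a sequence of unitaries, extract a uniform approximate coboundary via equicontinuity, and then trivialize the small cocycle by Haar averaging. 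Each step checks out; in particular the equicontinuity argument (pointwise convergence plus equicontinuity on a compact space forces uniform convergence) is valid, and the verification that $b^*b\in A^\alpha$ is correct. As you suspected, your proof never uses simplicity or pure infiniteness, so it actually establishes the stronger statement that $H^1_\alpha(G,\U(A))$ vanishes for \emph{any} Rokhlin action of a compact group on a separable unital \ca. The paper's approach, while less self-contained, has the advantage of tying the result into the broader framework of \cite{Izu_finiteI_2004} and the $K$-theoretic results developed earlier in the section.
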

\begin{proof}
(1). It is immediate to check that if two actions are exterior conjugate, then they induce
identical actions on the (continuous part of the) central sequence algebra. (Essentially because
the unitaries from the cocycle, which belong to $A$, commute with everything in the central sequence
algebra.) Thus, one of them has
the Rokhlin property if and only if the other ones does.

(2). We claim that $H^1_\alpha(G,\U(A))=\{0\}$.
By Theorem~4.14 in \cite{Gar_rokhlin_2017}, the action $\alpha_g$ is outer for all $g\in G\setminus\{1\}$,
so in particular $\alpha$ is faithful.
By Proposition~3.19 in \cite{Gar_crossed_2014}, the crossed product $A\rtimes_\alpha G$ is purely
infinite, and it is simple by Corollary~2.14 in~\cite{Gar_crossed_2014}.
Finally, $K_0(\iota)$ is injective by part (1) of \autoref{thm:InjK-Thy},
so the result follows from Proposition~2.5 in \cite{Izu_finiteI_2004}.

Now, suppose that $\alpha$ and $\beta$ are exterior conjugate, and let $u\colon G\to\U(A)$ be a
1-cocycle for $\alpha$ satisfying $\beta_g=\Ad(u_g)\circ \alpha_g$ for all $g\in G$. By the claim, there
exists a unitary $w\in \U(A)$ such that $u_g=w\alpha_g(w^*)$ for all $g\in G$. It is then immediate that
\[\Ad(w)\circ\alpha_g\circ \Ad(w^*)= \Ad(u_g)\circ\alpha_g=\beta_g\]
for all $g\in G$, proving the result.
\end{proof}

We proceed to briefly recall the definition of equivariant $K$-theory.

\begin{df} (See Chapter~2 in~\cite{Phi_equivariant_1987}).
Let $G$ be a compact group, let $A$ be a unital \ca\ and let $\alpha\colon G\to\Aut(A)$
be a continuous action. Denote by $\mathcal{P}_G(A)$ the set of all $G$-invariant projections
in all of the algebras of the form $\B(V)\otimes A$, for all \fd\ representations $v\colon G\to \U(V)$
(we take the diagonal action of $G$ on $\B(V)\otimes A$). Two $G$-invariant
projections $p$ and $q$ in $\mathcal{P}_G(A)$ are said to be
\emph{equivariantly Murray-von Neumann equivalent} if there exists a $G$-invariant partial isometry
$s$ in $\B(V,W)\otimes A$
such that $s^*s=p$ and $ss^*=q$. We let $V_G(A)$ denote the set
of equivalence classes in $\mathcal{P}_G(A)$ with addition given
by direct sum.

We define the \emph{equivariant $K_0$-group} of $A$, denoted $K_0^G(A)$, to be the Grothendieck group of
$V_G(A)$, and define the \emph{equivariant $K_1$-group} of $A$, denoted $K_1^G(A)$, to be $K_0^G(SA)$,
where the action of $G$ on $SA$ is trivial in the suspension direction.\end{df}

Recall that the representation ring $R(G)$ of a compact group $G$ is the Gro\-then\-dieck group of the
semigroup of unitary equivalence classes of finite dimensional (unitary) representations of $G$. Its concrete
picture is given by
\begin{align*}
R(G)=\left\{
[(V,v)]-[(W,w)]\colon
\begin{aligned}
&v\colon G\to \U(V) \mbox{ and } w\colon G\to\U(W) \mbox{ are } \\
& \ \mbox{ finite dimensional representations}
\end{aligned}
\right\}.
\end{align*}

\begin{rem} The equivariant $K$-theory of $A$ is a module over the representation ring $R(G)$ of
$G$, which can be identified with $K_0^G(\C)$, with the
operation given by tensor product. The induced operation $R(G)\times K_\ast^G(A)\to K_\ast^G(A)$
makes $K_\ast^G(A)$ into an $R(G)$-module. \end{rem}

We denote by $I(G)$ the augmentation ideal in $R(G)$, this is, the kernel of the
ring homomorphism $\dim\colon R(G)\to\Z$ given by
\[\dim([(V,v)]-[(W,w)])=\dim(V)-\dim(W)\]
for $[(V,v)]-[(W,w)]\in R(G)$.
The following is Definition~4.1.2 in \cite{Phi_equivariant_1987}.

\begin{df}\label{df:DiscrKT}
Let $G$ be a compact group, let $A$ be a unital \ca\ and let $\alpha\colon G\to\Aut(A)$
be a continuous action. We say that $\alpha$ has \emph{discrete $K$-theory} if there exists
$n\in\N$ such that $I(G)^n\cdot K_\ast^G(A)=\{0\}$.
\end{df}

In the following theorem, we show that the Rokhlin property implies discrete $K$-theory in a strong sense.

\begin{thm} \label{thm:DiscrKthy}
Let $G$ be a compact group, let $A$ be a \uca, and let $\alpha\colon G\to\Aut(A)$ be an action
with the Rokhlin property. Then $I(G)\cdot K^G_\ast(A)=0$. In particular, $\alpha$ has discrete
$K$-theory.
\end{thm}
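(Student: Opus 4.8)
The plan is to reduce everything to the single identity $[V]\cdot\xi=\dim(V)\cdot\xi$ for every \fd\ representation $(V,v)$ of $G$ and every $\xi\in K_\ast^G(A)$. Indeed, if $[V]-[W]\in I(G)$ then $\dim(V)=\dim(W)$ and $[V]-[W]=([V]-\dim(V)\cdot 1)-([W]-\dim(W)\cdot 1)$, so the elements $[V]-\dim(V)\cdot 1$ generate $I(G)$ as an abelian group; once the identity is known, $I(G)\cdot K_\ast^G(A)=0$ follows, and taking $n=1$ in \autoref{df:DiscrKT} gives discrete $K$-theory.

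First I would untwist over $C(G)$. Let $u\in C(G)\otimes\B(V)$ be the unitary $u(h)=v_h^\ast$. A direct check shows that $\Ad(u)$ is $C(G)$-linear (it fixes $C(G)\otimes 1$) and is a $G$-equivariant isomorphism from $(C(G)\otimes\B(V),\texttt{Lt}\otimes\Ad v)$ onto $(C(G)\otimes\B(V),\texttt{Lt}\otimes\id)$. Since it fixes the coefficient algebra $C(G)$ and sends the unit to the unit, it identifies $[V]\cdot[1_{C(G)}]$ with $\dim(V)\cdot[1_{C(G)}]$ in $K_0^G(C(G),\texttt{Lt})$. Hence $I(G)\cdot[1_{C(G)}]=0$.

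Next I would transport this to $A$ through the Rokhlin homomorphism. Write $\varphi\colon(C(G),\texttt{Lt})\to(A_{\I,\alpha}\cap A',\alpha_\I)$ as in \autoref{df:Rp} and let $\iota\colon A\to A_{\I,\alpha}$ be the inclusion of constant sequences. As $\varphi(C(G))$ commutes with $\iota(A)$, the assignment $\mu(f\otimes a)=\varphi(f)\iota(a)$ is a unital, $G$-equivariant homomorphism $\mu\colon(C(G)\otimes A,\texttt{Lt}\otimes\alpha)\to(A_{\I,\alpha},\alpha_\I)$, and $\mu\circ\lambda=\iota$ for the equivariant inclusion $\lambda\colon A\to C(G)\otimes A$, $\lambda(a)=1_{C(G)}\otimes a$ (here I use that $\varphi$ is unital). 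The equivariant external product $\boxtimes\colon K_\ast^G(C(G))\otimes K_\ast^G(A)\to K_\ast^G(C(G)\otimes A)$ is $R(G)$-bilinear, that is, $x\cdot(\eta\boxtimes\zeta)=(x\cdot\eta)\boxtimes\zeta=\eta\boxtimes(x\cdot\zeta)$, and satisfies $K_\ast^G(\lambda)(\zeta)=[1_{C(G)}]\boxtimes\zeta$. For $x\in I(G)$ and $\xi\in K_\ast^G(A)$ this yields $K_\ast^G(\iota)(x\cdot\xi)=K_\ast^G(\mu)\big([1_{C(G)}]\boxtimes(x\cdot\xi)\big)=K_\ast^G(\mu)\big((x\cdot[1_{C(G)}])\boxtimes\xi\big)=0$, using the previous paragraph at the last step.

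The remaining, and main, obstacle is to show that $K_\ast^G(\iota)\colon K_\ast^G(A)\to K_\ast^G(A_{\I,\alpha})$ is injective, as this then forces $x\cdot\xi=0$. Since equivariant $K$-theory is continuous, I may assume $A$ separable. The equivariant evaluations $\mathrm{ev}_n\colon\ell^\I_\alpha(\N,A)\to A$ satisfy $\mathrm{ev}_n\circ(\text{const})=\id_A$, so the inclusion of constants induces a split injection $K_\ast^G(A)\to K_\ast^G(\ell^\I_\alpha(\N,A))$; combining this with the six-term exact sequence (valid equivariantly, e.g.\ via Green--Julg) for $0\to c_{0}(\N,A)\cap\ell^\I_\alpha(\N,A)\to\ell^\I_\alpha(\N,A)\to A_{\I,\alpha}\to 0$ and a limiting argument over the $\mathrm{ev}_n$ should show that no nonzero constant class can lie in the image of the ideal. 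This is the equivariant analogue of the standard fact that $B\to B_\I$ is injective on $K$-theory, and checking it carefully --- in particular that the evaluations annihilate the ideal's classes in the limit --- is where the real work lies.
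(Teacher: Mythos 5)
Your proposal is correct, but it takes a genuinely different route from the paper's proof. The paper makes the same initial reduction (to showing $[(W,w)]\cdot[p]=[(W,\id_W)]\cdot[p]$ for a $G$-invariant projection $p$), but then argues by perturbation: it realizes the two classes as the invariant projections $\diag(p,0)$ and $\diag(0,p)$ in $M_2(\B(W))\otimes A$ with the action $g\mapsto\Ad(\diag(w_g,1))\otimes\alpha_g$, applies the averaging maps $\psi$ of \autoref{thm:ApproxHomFixingFPA} (the paper's main use of the Rokhlin property) to the off-diagonal rotation unitary $r$, and perturbs $\psi(r)$, via Lemma~2.5.3 of \cite{Lin_introduction_2001}, into an invariant unitary conjugating one projection to the other, i.e.\ an exact equivariant Murray--von Neumann equivalence inside the fixed point algebra. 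You instead use the Rokhlin property only once and exactly, through the map $\mu\colon C(G)\otimes A\to A_{\I,\alpha}$ with $\mu\circ\lambda=\iota$ (in the language of \cite{BarSza_sequentially_2016}, $\lambda$ is equivariantly sequentially split); the identity $I(G)\cdot[1_{C(G)}]=0$ is then proved exactly in $K_0^G(C(G),\texttt{Lt})$ by your untwisting unitary $u(h)=v_h^*$ --- which is precisely an invariant partial isometry implementing the equivariant equivalence of the unit with reference representation $(V,v)$ and the unit with the trivial representation --- and transported by $R(G)$-bilinearity of the external product. Your approach buys a soft, $\varepsilon$-free argument that cleanly isolates where the Rokhlin property enters; its cost is heavier equivariant $K$-theory machinery, above all the injectivity of $K_\ast^G(\iota)\colon K_\ast^G(A)\to K_\ast^G(A_{\I,\alpha})$, which you rightly flag as the main remaining work, and which in fact uses no Rokhlin property at all. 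Your sketch of that step is completable as written (via Julg's theorem, Theorem~2.6.1 in \cite{Phi_equivariant_1987}, the six-term sequence and continuity are available, $K_\ast^G(c_0(\N,A))\cong\bigoplus_{n\in\N}K_\ast^G(A)$, and evaluation at large $n$ kills any class coming from the ideal while fixing constant classes), but there is a shorter, more elementary proof: a class killed by $K_\ast^G(\iota)$ is witnessed, after stabilizing by a constant invariant projection, by an invariant partial isometry over $A_{\I,\alpha}$; lift it to $\ell^\I_\alpha(\N,A)$, average the lift over $G$ (this does not change its image, since the quotient map is equivariant and the element downstairs is invariant), and perturb a far-enough component to an exact invariant partial isometry over $A$. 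This elementary route also lets you drop the reduction to separable $A$, which as stated is not quite justified: the construction $A\mapsto A_{\I,\alpha}$ does not commute with direct limits, so "continuity of $K^G_\ast$" alone does not reduce the injectivity statement to the separable case.
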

\begin{proof}
We show the result for $K_0^G$; the result for $K_1^G$ is analogous. (It also follows by replacing
$A$ with $A\otimes B$, where $B$ is any \uca\ satisfying the UCT with $K_0(B)=\{0\}$ and $K_1(B)=\Z$,
and endowing it with the $G$-action $\alpha\otimes\id_B$.)

We show first that $I(G)\cdot V_G(A)=0$. Let $v\colon G\to \U(V)$ be a finite dimensional
representation, and let $p$ be a $G$-invariant projection in $\B(V)\otimes A$. Since the action
$g\mapsto \Ad(w) \otimes \alpha_g$ of $G$ on $\B(V)\otimes A$ has the Rokhlin property by part (1)
of \autoref{thm: permanence properties}, we may assume that $V=\C$, so that $p$ is a $G$-invariant
projection in $A$.

Let $x=[(W_1,w_1)]-[(W_2,w_2)]\in I(G)$ be given. Since $W_1\cong W_2$ as vector spaces, it is clear
that
\[x=\left([(W_1,w_1)]-[(W_1,\id_{W_1})]\right) - \left([(W_2,w_2)]-[(W_2,\id_{W_2})]\right).\]
In particular, it is enough to show that if $w\colon G\to\U(W)$ is a finite dimensional representation,
then $\left([(W_1,w_1)]-[(W_1,\id_{W_1})]\right)[p]=0$ in $K^G_0(A)$. We identify $M_2(\B(W))\otimes A$
with $M_2(\B(W,A))$ in the usual way. We will show that the elements
\[
\left(
  \begin{array}{cc}
    \id_W & 0 \\
    0 & 0 \\
  \end{array}
\right)\otimes p =\left(
  \begin{array}{cc}
    p & 0 \\
    0 & 0 \\
  \end{array}
\right)\in M_2(\B(W,A))\] and \[\left(
  \begin{array}{cc}
    0 & 0 \\
    0 & \id_W \\
  \end{array}
\right)\otimes p=\left(
  \begin{array}{cc}
    0 & 0 \\
    0 & p \\
  \end{array}
\right)\in M_2(\B(W,A))
\]
are Murray-von Neumann equivalent in the fixed point algebra of $M_2(\B(W,A))$. The action $\beta\colon G\to \Aut(M_2(\B(W))\otimes A)$ is given by
\[
\beta_g=\Ad\left(
               \begin{array}{cc}
                 w_g & 0 \\
                 0 & 1 \\
               \end{array}
             \right)\otimes \alpha_g,\]
which again has the Rokhlin property.

Let $0<\ep<\frac{1}{3}$, and find $\delta_0>0$ such that whenever $B$ is a \ca\ and $s\in B$ satisfies $\|s^*s-1\|<\delta_0$
and $\|ss^*-1\|<\delta$, then there exists a unitary $u$ in $B$ such that $\|u-s\|<\ep$.
Set $\delta=\min\{\delta_0,\frac{1}{5}\}$.

Set
$r= \left(
               \begin{array}{cc}
                 0 & 1 \\
                 1 & 0 \\
               \end{array}
             \right)\otimes 1_A$,
and observe that
\[\beta_g(r)= \left(
               \begin{array}{cc}
                 0 & w_g^* \\
                 w_g & 0 \\
               \end{array}
             \right)\otimes 1_A\]
for all $g\in G$. Set $F=\left\{r,r^*,\left(
                                   \begin{array}{cc}
                                     p & 0 \\
                                     0 & 0 \\
                                   \end{array}
                                 \right),\left(
                                   \begin{array}{cc}
                                     0 & 0 \\
                                     0 & p \\
                                   \end{array}
                                 \right)\right\}$.
Use \autoref{thm:ApproxHomFixingFPA} to find a continuous, unital linear map
\[\psi\colon M_2(\B(W))\otimes A\to \left(M_2(\B(W))\otimes A\right)^\beta\]
which is the identity on $\left(M_2(\B(W))\otimes A\right)^\beta$, and moreover satisfies
\[\|\psi(ab)-\psi(a)\psi(b)\|<\frac{\delta}{3} \ \mbox{ and } \ \|\psi(a^*)-\psi(a)^*\|<\frac{\delta}{2}\]
for all $a,b\in F$.

We have
\begin{align*}
\|\psi(r)^*\psi(r)-1\|&\leq \|\psi(r)^*\psi(r)-\psi(r^*)\psi(r)\|+\|\psi(r^*)\psi(r)-\psi(r^*r)\|\\
&\leq
2\|\psi(r)^*-\psi(r^*)\|+\frac{\delta}{3}<\delta\leq\delta_0.\end{align*}
Likewise, $\|\psi(r)\psi(r)^*-1\|< \delta_0$. By the choice of $\delta_0$, there exists a unitary
$u\in M_2(\B(W,A))^\beta$ such that $\|u-\psi(r)\|<\ep$.

Recall that $\psi\left(\left(
          \begin{array}{cc}
            p & 0 \\
            0 & 0 \\
          \end{array}
        \right)\right)=\left(
          \begin{array}{cc}
            p & 0 \\
            0 & 0 \\
          \end{array}
        \right)$ and similarly for $\left(
          \begin{array}{cc}
            0 & 0 \\
            0 & p \\
          \end{array}
        \right)$. Using this at the third step, we compute
\begin{align*}
\left\|u\left(
          \begin{array}{cc}
            p & 0 \\
            0 & 0 \\
          \end{array}
        \right)u^*-\left(
          \begin{array}{cc}
            0 & 0 \\
            0 & p \\
          \end{array}
        \right)\right\|&\leq 2\|u-\psi(r)\|+ \left\|\psi(r)\left(\begin{array}{cc}
            p & 0 \\
            0 & 0 \\
          \end{array}
        \right)u^*-\left(
          \begin{array}{cc}
            0 & 0 \\
            0 & p \\
          \end{array}
        \right)\right\|\\
        &\leq 2\ep+ \left\|\psi(r)\left(\begin{array}{cc}
            p & 0 \\
            0 & 0 \\
          \end{array}
        \right)u^*-\left(
          \begin{array}{cc}
            0 & 0 \\
            0 & p \\
          \end{array}
        \right)\right\|\\
&\leq 2\ep + 5\frac{\delta}{3}<1.
\end{align*}

By Lemma~2.5.3 in \cite{Lin_introduction_2001}, the projections $\left(
                                                      \begin{array}{cc}
                                                        p & 0 \\
                                                        0 & 0 \\
                                                      \end{array}
                                                    \right)
$ and $\left(
                                                      \begin{array}{cc}
                                                        0 & 0 \\
                                                        0 & p \\
                                                      \end{array}
                                                    \right)$ are Murray-von Neumann equivalent in
$M_2(\B(W,A))^\beta$.

Since $K_0^G(A)$ is generated by the image of $V_G(A)$, the result follows.
\end{proof}

Suppose that $G$ is abelian, and set $\Gamma=\widehat{G}$.
It follows from \autoref{thm:DiscrKthy} and the canonical $R(G)$-module
identification $K_\ast^G(A)\cong K_\ast(A\rtimes_\alpha G)$ given by Julg's Theorem
(see Theorem~2.6.1 in~\cite{Phi_equivariant_1987}
for the identification as abelian groups, and see Proposition~2.7.10 in~\cite{Phi_equivariant_1987}
for the identification as $R(G)\cong \Z[\Gamma]$-modules),
that $K_\ast(\widehat{\alpha}_\gamma)=\id_{K_\ast(A\rtimes_\alpha G)}$ for all $\gamma\in\Gamma$.

\subsection{Cuntz semigroup and fixed point algebras.}
In this section, we study the map induced by the inclusion $A^\alpha\to A$ at the level of the
Cuntz semigroup (\autoref{thm:InjCu}). In \autoref{cor:K-thyCuCP}, we relate the $K$-theory
and Cuntz semigroup of the crossed product to those of the original algebra.

We need to briefly review the definition of the Cuntz semigroup.
The material to be recalled, and much more, can be found in \cite{CowEllIva_cuntz_2008} and
\cite{AntPerThi_tensor_2014}

\begin{df} Let $A$ be a C*-algebra and let $a$ and $b$ be positive elements in $A$. We say that $a$
is \emph{Cuntz subequivalent} to $b$, and denote this by $a\precsim b$, if there is a
sequence $(x_n)_{n\in\N}$ in $A$ such that
\[\lim_{n\to\I}\left\|x_n^*bx_n- a\right\|=0.\]

We say that $a$ is \emph{Cuntz equivalent} to $b$, and denote this by $a\sim b$, if $a\precsim b$
and $b\precsim a$.\end{df}

It can be shown that $\precsim$ is a preorder on the set of positive elements of $A$, from which it
follows that $\sim$ is an equivalence relation. We denote by $[a]$ the Cuntz equivalence class of
the positive element $a\in A$.

\begin{df}
The Cuntz semigroup of a \ca\ $A$, denoted by $\Cu(A)$, is defined to be the set of Cuntz
equivalence classes of positive elements of $A\otimes \K$. Addition in $\Cu(A)$ is given by direct sum.
Moreover, $\Cu(A)$ becomes an ordered semigroup when equipped with the order given
by $[a]\leq [b]$ if $a\precsim b$, for $a,b\in (A\otimes\K)_+$.

If $A$ and $B$ are \ca s, any homomorphism $\varphi\colon A\to B$ induces an
order-preserving map $\Cu(\varphi)\colon \Cu(A)\to \Cu(B)$, given by
\[\Cu(\varphi)([a])=[(\varphi\otimes \id_\K)(a)]\]
for every positive element $a\in A\otimes\K$.\end{df}

Recall that a semigroup homomorphism $\sigma\colon S\to T$ between partially ordered semigroups $S$ and $T$
is said to be an \emph{order-embedding}, if $x,y\in S$ and $\sigma(x)\leq \sigma(y)$ in $T$,
imply $x\leq y$ in $S$. (Note that order-embeddings are in particular injective.)

\begin{thm}\label{thm:InjCu}
Let $A$ be a unital, separable \ca, let $G$ be a second-countable compact group, and let
$\alpha\colon G\to\Aut(A)$ be an action with the \Rp.
Then the canonical inclusion $\iota\colon A^\alpha\to A$
induces an order-embedding $\Cu(\iota)\colon \Cu(A^\alpha)\to \Cu(A)$.
\end{thm}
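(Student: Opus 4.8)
The plan is to reduce the statement to a Cuntz-theoretic analogue of \autoref{prop:MvNeq}, namely: \emph{if $a,b\in (A^\alpha\otimes\K)_+$ satisfy $a\precsim b$ in $A\otimes\K$, then $a\precsim b$ in $A^\alpha\otimes\K$}. Granting this descent statement, the order-embedding property is immediate: if $x,y\in\Cu(A^\alpha)$ satisfy $\Cu(\iota)(x)\leq \Cu(\iota)(y)$, represent $x=[a]$ and $y=[b]$ with $a,b\in(A^\alpha\otimes\K)_+$; the hypothesis says precisely that $a\precsim b$ in $A\otimes\K$, and the descent then gives $a\precsim b$ in $A^\alpha\otimes\K$, that is, $x\leq y$.

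To prove the descent, I would first make the standard reductions from the theory of the Cuntz semigroup. Since $a\precsim b$ in $A^\alpha\otimes\K$ holds as soon as $(a-\ep)_+\precsim b$ there for every $\ep>0$, and since positive elements of $A^\alpha\otimes\K$ can be approximated in norm by positive elements of $M_n(A^\alpha)=M_n(A)^{\alpha\otimes\id_{M_n}}$, it suffices (using the cut-down technology and R{\o}rdam's perturbation lemma to pass between these approximations) to treat $a,b\in M_n(A^\alpha)_+$; here the amplified action $\alpha\otimes\id_{M_n}$ again has the Rokhlin property by part~(1) of \autoref{thm: permanence properties}. Fix $\ep>0$. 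By the standard characterization of Cuntz subequivalence (see \cite{AntPerThi_tensor_2014}), from $a\precsim b$ in $M_n(A)$ one can find $z\in M_n(A)$ with the \emph{exact} relation $(a-\ep)_+=z\,b\,z^*$.

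Now I would invoke the main technical device. Apply \autoref{thm:ApproxHomFixingFPA} to the amplified action on $M_n(A)$, with the finite set $F\supseteq\{b,z,z^*,zb\}$ and a tolerance $\eta>0$ to be specified, obtaining a unital linear map $\psi\colon M_n(A)\to M_n(A^\alpha)$ that is approximately multiplicative and $\ast$-preserving on $F$, norm-bounded by $2$ there, and fixes $M_n(A^\alpha)$ pointwise. Set $w=\psi(z)\in M_n(A^\alpha)$. Using $\psi(b)=b$ and $\psi\big((a-\ep)_+\big)=(a-\ep)_+$ (both lie in $M_n(A^\alpha)$, the latter because the fixed-point algebra is closed under continuous functional calculus), the chain $w\,b\,w^*=\psi(z)\psi(b)\psi(z)^*\approx\psi(z)\psi(b)\psi(z^*)\approx\psi(zb)\psi(z^*)\approx\psi(zbz^*)=(a-\ep)_+$ yields $\|w\,b\,w^*-(a-\ep)_+\|\leq C\eta$, where $C=2\|z\|\,\|b\|+2\|z\|+1$ is controlled once $\ep$ is fixed. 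Choosing $\eta$ so that $C\eta<\ep'$ and applying R{\o}rdam's perturbation lemma, I obtain $(a-\ep-\ep')_+\precsim w\,b\,w^*\precsim b$ in $M_n(A^\alpha)$, the last subequivalence being the trivial relation $w\,b\,w^*\precsim b$ valid in any algebra. Letting $\ep'\to 0$ and then $\ep\to 0$ gives $a\precsim b$ in $A^\alpha\otimes\K$, as required.

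The conceptual engine is exactly the one behind \autoref{prop:MvNeq}: the approximate homomorphism $\psi$ transports the witness $z$ of subequivalence from $A$ into the fixed-point algebra while distorting the defining relation by an arbitrarily small amount. The main obstacle, and the only genuine difference from the projection case, is that Cuntz subequivalence is witnessed only in a limiting sense rather than by an exact equation; this forces the systematic use of the cut-downs $(a-\ep)_+$ together with R{\o}rdam's lemmas both to produce the exact relation $(a-\ep)_+=zbz^*$ to which $\psi$ is applied, and to absorb the resulting error $C\eta$ back into a genuine subequivalence. Keeping all of the intermediate manipulations inside $A^\alpha\otimes\K$ rather than $A\otimes\K$ is precisely what makes the descent, and hence the order-embedding, go through.
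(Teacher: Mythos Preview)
Your proposal is correct and follows essentially the same strategy as the paper: reduce to matrix levels, invoke R{\o}rdam's lemma to obtain an exact algebraic witness of the subequivalence, and then push that witness into the fixed-point algebra using the approximate homomorphism of \autoref{thm:ApproxHomFixingFPA}. The only difference is the particular form of R{\o}rdam's lemma chosen: the paper writes $(x-\ep)_+=s^*s$ with $ss^*\in\mathrm{Her}\bigl((y-\delta)_+\bigr)$ and tracks $\psi(s)^*\psi(s)$ and $\psi(s)\psi(s)^*$, whereas you write $(a-\ep)_+=zbz^*$ and track $\psi(z)\,b\,\psi(z)^*$. Your variant is arguably a bit cleaner, since $w b w^*\precsim b$ in $M_n(A^\alpha)$ is immediate once $w=\psi(z)$ and $b$ both live there, while the paper must still argue why the image of $ss^*$ is Cuntz-below $y$ inside $A^\alpha$. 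Both arguments are equally hand-wavy about the passage from $A\otimes\K$ to matrices; this reduction is standard but does require a word (e.g., compress by $1\otimes e_k$ and use that $(b-\delta)_+$ is Cuntz-equivalent in $A^\alpha\otimes\K$ to something in a matrix level).
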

\begin{proof}
Let $x$ and $y$ be positive elements in $A^\alpha \otimes \K$ such that $x\precsim y$ in $A\otimes\K$.
By R\o rdam's Lemma (Proposition~2.4 in~\cite{Ror_structureI_1991}), given $\varepsilon>0$
there exist $k$ in $\N$, a positive number $\delta>0$ and $s$ in $A\otimes M_k$ such that $(x-\ep)_+=s^*s$
and $ss^*$ belongs to the hereditary subalgebra of $A\otimes M_k$ generated by $(y-\delta)_+$.
Note that the action $\alpha\otimes\id_{M_k}$ of $G$ on $A\otimes M_k$ has the \Rp\ by part (1) of
\autoref{thm: permanence properties}, and that $M_k(A)^{\alpha\otimes\id_{M_k}}$ can be canonically
identified with $M_k(A^\alpha)$. Let $(\psi_n)_{n\in\N}$ be a sequence of unital
completely positive maps $A\to A^\alpha$ as in the conclusion of \autoref{thm:ApproxHomFixingFPA}. For $n\in\N$,
we denote by $\psi^{(k)}_n\colon M_k(A)\to M_k(A^\alpha)$ the tensor product of $\psi_n$ with $\id_{M_k}$.
Since $s^*s=(x-\ep)_+$, we have
$$\lim_{n\to\I}\left\|\psi^{(k)}_n(s)^*\psi^{(k)}_n(s)-(x-\ep)_+\right\|=0.$$
We can therefore find a sequence $(t_m)_{m\in\N}$ in $M_k(A^\alpha)$ such that
\[\left\|t_m^*t_m-(x-\ep)_+\right\|<\frac{1}{m} \ \ \mbox{ and } \ \ \left\|t_mt_m^*-ss^*\right\|<\frac{1}{m}\]
for all $m\in\N$.
We deduce that
\[\left[\left(x-\ep-\frac{1}{m}\right)_+\right]\leq \left[t_m^*t_m\right] = \left[t_mt_m^*\right]\]
in $\Cu(A^\alpha)$. Taking limits as $m\to \I$, and using R\o rdam's Lemma again, we conclude that
\[[(x-\ep)_+]\leq [ss^*] \leq [y]\]
in $\Cu(A^\alpha)$. Since $\ep>0$ is arbitrary, this implies that $[x]\leq [y]$ in $\Cu(A^\alpha)$,
as desired. This finishes the proof.\end{proof}

\begin{rem}
In the context of the theorem above, one can show that
if $\Cu(A)$ is finitely generated as a Cuntz semigroup,
then $\Cu(\iota)(\Cu(A^\alpha))$ is a direct summand of $\Cu(A)$
(although the splitting is not natural). However, very few \ca s have finitely generated
Cuntz semigroups (this, in particular, implies that $\Cu(A)$ is countable), and hence we
do not prove this assertion here.
\end{rem}

\begin{cor}\label{cor:K-thyCuCP}
Let $A$ be a unital, separable \ca, let $G$ be a second-countable compact group, and let
$\alpha\colon G\to\Aut(A)$ be an action with the \Rp.
\be\item There is a canonical identification of $K_\ast(A\rtimes_\alpha G)$ with an order-subgroup
of $K_\ast(A)$;
\item There is a canonical identification of $\Cu(A\rtimes_\alpha G)$ with a sub-semigroup of
$\Cu(A)$.\ee\end{cor}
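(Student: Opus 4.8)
The plan is to reduce both statements to the results already established for the fixed point algebra $A^\alpha$, exploiting the fact that the Rokhlin property forces $\alpha$ to be free, and hence saturated. Concretely, since $\alpha$ has the \Rp, the crossed product $A\rtimes_\alpha G$ is Morita equivalent to $A^\alpha$; this is established in \cite{Gar_crossed_2014}. (One may view this as the assertion $A\rtimes_\alpha G\cong A^\alpha\otimes\K(L^2(G))$ suggested by the basic examples: for $\texttt{Lt}\colon G\to\Aut(C(G))$ one has $A^\alpha=\C$ and $C(G)\rtimes_{\texttt{Lt}}G\cong\K(L^2(G))$, so the two sides are Morita equivalent.) Since $A$ is separable, so are $A\rtimes_\alpha G$ and $A^\alpha$, and this Morita equivalence is the same as a stable isomorphism.

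For part~(1), I would first use that Morita equivalence induces an isomorphism of \emph{ordered} abelian groups $K_\ast(A\rtimes_\alpha G)\cong K_\ast(A^\alpha)$, the positive cone of $K_0$ being carried onto the positive cone. I would then compose this isomorphism with the map $K_\ast(\iota)\colon K_\ast(A^\alpha)\to K_\ast(A)$ induced by the canonical inclusion $\iota\colon A^\alpha\to A$. By part~(1) of \autoref{thm:InjK-Thy} this map is injective, and by part~(2) it is an order-embedding on $K_0$; hence the composite identifies $K_\ast(A\rtimes_\alpha G)$ with the subgroup $K_\ast(\iota)(K_\ast(A^\alpha))$ of $K_\ast(A)$, and the order on the $K_0$-part of this subgroup inherited from $K_0(A)$ agrees, under the identification, with the order on $K_0(A\rtimes_\alpha G)$.

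For part~(2) the argument is formally identical, with the Cuntz semigroup in place of $K$-theory. Since $\Cu$ is defined through $A\otimes\K$, stably isomorphic \ca s have isomorphic Cuntz semigroups as ordered semigroups, so $\Cu(A\rtimes_\alpha G)\cong\Cu(A^\alpha)$. Composing this with the order-embedding $\Cu(\iota)\colon\Cu(A^\alpha)\to\Cu(A)$ furnished by \autoref{thm:InjCu} identifies $\Cu(A\rtimes_\alpha G)$ with the sub-semigroup $\Cu(\iota)(\Cu(A^\alpha))$ of $\Cu(A)$.

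The only step requiring genuine input beyond the theorems already proved is the Morita equivalence $A\rtimes_\alpha G\sim A^\alpha$, and this is where I expect the main difficulty to lie; everything else is the formal composition of an ordered isomorphism with an order-embedding. Once that equivalence is in hand (via saturation and freeness of Rokhlin actions, \cite{Gar_crossed_2014}), both parts of the corollary follow at once from \autoref{thm:InjK-Thy} and \autoref{thm:InjCu}.
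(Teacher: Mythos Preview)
Your proposal is correct and follows essentially the same route as the paper: invoke the Morita equivalence $A\rtimes_\alpha G\sim A^\alpha$ from \cite{Gar_crossed_2014} (Proposition~2.4 there), use that Morita equivalence preserves ordered $K$-theory and the Cuntz semigroup for separable \ca s, and then apply \autoref{thm:InjK-Thy} and \autoref{thm:InjCu}. The only minor comment is that the Morita equivalence is not where the difficulty lies---it is an already-established black box---so your closing paragraph overstates the work remaining.
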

\begin{proof} Recall that two Morita equivalent separable \ca s have canonically isomorphic
$K$-groups and Cuntz semigroup. The result then follows from Proposition~2.4 in \cite{Gar_crossed_2014}
together with \autoref{thm:InjK-Thy} or \autoref{thm:InjCu}.\end{proof}

Recall that an ordered semigroup $S$ is said to be \emph{almost unperforated} if for every
$x,y\in S$, and $n\in\N$ such that $(n+1)x\leq ny$, then $x\leq y$. Since
almost unperforation passes to sub-semigroups (with the induced order), the following is immediate.

\begin{cor}\label{cor:AlmostUnperf}
Let $A$ be a unital, separable \ca, let $G$ be a second-countable compact group, and let
$\alpha\colon G\to\Aut(A)$ be an action with the \Rp. If $\Cu(A)$ is almost unperforated, then
so are $\Cu(A^\alpha)$ and $\Cu(A\rtimes_\alpha G)$. \end{cor}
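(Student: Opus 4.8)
The plan is to reduce both assertions to a single elementary observation about ordered semigroups and then invoke the two embedding results established earlier. First I would record the fact underlying the sentence immediately preceding the corollary: if $\sigma\colon S\to T$ is an order-embedding of partially ordered semigroups and $T$ is almost unperforated, then so is $S$. To see this, suppose $x,y\in S$ and $n\in\N$ satisfy $(n+1)x\leq ny$ in $S$. Applying the order-preserving homomorphism $\sigma$ and using additivity gives $(n+1)\sigma(x)\leq n\sigma(y)$ in $T$; almost unperforation of $T$ then yields $\sigma(x)\leq\sigma(y)$, and since $\sigma$ is an order-embedding we conclude $x\leq y$ in $S$. Thus almost unperforation is inherited by any sub-semigroup carrying the induced order.

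With this in hand, the claim for $\Cu(A^\alpha)$ is immediate: by \autoref{thm:InjCu} the inclusion $\iota\colon A^\alpha\to A$ induces an order-embedding $\Cu(\iota)\colon \Cu(A^\alpha)\to\Cu(A)$, so taking $S=\Cu(A^\alpha)$, $T=\Cu(A)$ and $\sigma=\Cu(\iota)$ shows that $\Cu(A^\alpha)$ is almost unperforated whenever $\Cu(A)$ is. The claim for the crossed product is handled identically, using part~(2) of \autoref{cor:K-thyCuCP}, which identifies $\Cu(A\rtimes_\alpha G)$ with a sub-semigroup of $\Cu(A)$ via an order-embedding; this embedding is obtained by composing the Cuntz-semigroup isomorphism coming from the Morita equivalence between $A\rtimes_\alpha G$ and $A^\alpha$ with the order-embedding $\Cu(\iota)$ of \autoref{thm:InjCu}, and a composition of order-embeddings is again an order-embedding.

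I do not anticipate any genuine obstacle here, since the entire content is packaged into the two embedding theorems proved earlier and the remaining argument is the one-line deduction above. The only point requiring any care is to confirm that the identifications in \autoref{thm:InjCu} and \autoref{cor:K-thyCuCP} are genuinely order-embeddings rather than merely injective semigroup homomorphisms, which is exactly what those statements assert; almost unperforation of a super-semigroup would \emph{not} in general descend through a map that is only injective and order-preserving, so the order-embedding property is precisely what makes the deduction valid.
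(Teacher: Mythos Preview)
Your proposal is correct and follows exactly the approach the paper takes: the paper's entire argument is the sentence ``Since almost unperforation passes to sub-semigroups (with the induced order), the following is immediate,'' and you have simply spelled out that deduction and traced the order-embeddings back to \autoref{thm:InjCu} and \autoref{cor:K-thyCuCP}. Your observation that mere injectivity would not suffice, and that the order-embedding property is what makes the inheritance work, is exactly the point being used.
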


Under the presence of the Rokhlin property, the ideal structure of crossed products and fixed
point algebras can be completely determined: ideals are induced by invariant ideals in the
original algebra; see Theorem~2.13 in~\cite{Gar_crossed_2014}.
Moreover, we deduce that strict comparison of positive elements is preserved under formation
of crossed products and passage to fixed point algebras.

\begin{cor}\label{cor:StrComp}
Let $A$ be a simple, unital, separable \ca, let $G$ be a second-countable compact group, and let
$\alpha\colon G\to\Aut(A)$ be an action with the \Rp. If $A$ has strict comparison
of positive elements, then so do $A^\alpha$ and $A\rtimes_\alpha G$. \end{cor}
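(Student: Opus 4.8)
The plan is to reduce the statement to \autoref{cor:AlmostUnperf} by using the equivalence, for a \emph{simple} \ca\ $B$, between strict comparison of positive elements and almost unperforation of $\Cu(B)$. One direction is R\o rdam's observation that almost unperforation yields comparison by the functionals on $\Cu(B)$, which for simple $B$ are exactly the dimension functions $d_\tau$ associated to the lower semicontinuous $2$-quasitraces $\tau$. For the other direction, if $(n+1)x\leq ny$ in $\Cu(B)$ then $d_\tau(x)\leq \tfrac{n}{n+1}d_\tau(y)<d_\tau(y)$ whenever $0<d_\tau(y)<\infty$, and strict comparison then forces $x\leq y$. Granting this equivalence, the proof splits into transferring almost unperforation, which is already done in \autoref{cor:AlmostUnperf}, and verifying that $A^\alpha$ and $A\rtimes_\alpha G$ are simple so that the equivalence applies to them.

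For simplicity I would invoke the ideal-structure description recalled just before the statement: under the Rokhlin property, ideals of $A^\alpha$ and of $A\rtimes_\alpha G$ are induced by $\alpha$-invariant ideals of $A$ (Theorem~2.13 in \cite{Gar_crossed_2014}). As $A$ is simple, its only invariant ideals are $\{0\}$ and $A$, so both $A^\alpha$ and $A\rtimes_\alpha G$ are simple. The argument then chains together: $A$ simple with strict comparison $\Rightarrow$ $\Cu(A)$ almost unperforated $\Rightarrow$ (by \autoref{cor:AlmostUnperf}) $\Cu(A^\alpha)$ and $\Cu(A\rtimes_\alpha G)$ almost unperforated $\Rightarrow$ (simplicity together with the equivalence) $A^\alpha$ and $A\rtimes_\alpha G$ have strict comparison.

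The delicate point is the functional bookkeeping in the equivalence: one must fix the correct class of $2$-quasitraces, deal with the degenerate functionals where $d_\tau(y)\in\{0,\infty\}$ in the converse implication, and use simplicity to guarantee that the functionals detect the order. A more self-contained alternative sidesteps almost unperforation: given $a,b\in(A^\alpha\otimes\K)_+$ with $d_\tau(a)<d_\tau(b)$ for all quasitraces $\tau$ on $A^\alpha$, I would restrict each quasitrace $\sigma$ on $A$ to $A^\alpha$ (noting that $d_\sigma$ and $d_{\sigma|_{A^\alpha}}$ agree on positive elements of $A^\alpha\otimes\K$) to obtain $d_\sigma(a)<d_\sigma(b)$ for all $\sigma$; strict comparison of $A$ then gives $a\precsim b$ in $A$, and the order-embedding of \autoref{thm:InjCu} upgrades this to $a\precsim b$ in $A^\alpha$. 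The case of $A\rtimes_\alpha G$ then follows by Morita invariance of strict comparison, since it is Morita equivalent to $A^\alpha$ (cf.\ \autoref{cor:K-thyCuCP}).
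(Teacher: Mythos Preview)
Your proposal is correct and follows essentially the same route as the paper: invoke the equivalence, for simple \ca s, between strict comparison and almost unperforation of the Cuntz semigroup, observe that $A^\alpha$ and $A\rtimes_\alpha G$ are simple (the paper cites Corollary~2.14 in \cite{Gar_crossed_2014} directly rather than deducing it from Theorem~2.13), and apply \autoref{cor:AlmostUnperf}. Your alternative argument via \autoref{thm:InjCu} and restriction of quasitraces is a nice bonus but is not needed, and the paper does not pursue it.
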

\begin{proof} By Lemma 6.1 in \cite{TikTom_structure_2015}, strict comparison of positive elements for a simple
\ca\ is equivalent to almost unperforation of its Cuntz semigroup. Since $A^\alpha$ and $A\rtimes_\alpha G$
are simple by Corollary~2.14 in~\cite{Gar_crossed_2014}, the result follows from \autoref{cor:AlmostUnperf}.\end{proof}

\section{Equivariant semiprojectivity and duality}

In this section, we explore the following question: \emph{are Rokhlin actions of abelian groups automatically dual actions?} 
Surprisingly, the answer is ``yes'' for compact Lie groups of dimension at most one; see \autoref{thm:RokAreDual}. 
Equivariant semiprojectivity, as defined by Phillips in~\cite{Phi_equivariant_2011}, is our main technical tool which allows
us to replace almost equivariant almost homomorphisms by honest equivariant homomorphisms. The conditions on $G$ mentioned
above ensure that $(C(G),\texttt{Lt})$ is equivariantly semiprojective; see \autoref{thm:C(G)eqsj}. The techniques developed
here also allow us to characterize commutative systems with the Rokhlin property, as those free systems for which the 
associated principal $G$-bundle is trivial; see \autoref{thm:CommSysts}. 

The following is essentially Definition~1.1 in \cite{Phi_equivariant_2011}; see also \cite{PhiSorThi_semiprojectivity_2015}

\begin{df}\label{df:EqSj}
Let $G$ be a locally compact group, let $A$ be a \ca, and let $\alpha\colon G\to\Aut(A)$ be a
continuous action. Let $\mathcal{B}$ be a class of \ca s.
We say that the triple $(G,A,\alpha)$ is \emph{equivariantly semiprojective with respect to $\mathcal{B}$},
if the following holds: given an action $\beta\colon G\to\Aut(B)$ of $G$ on a \ca\ $B$ in $\mathcal{B}$, given
an increasing sequence $J_1\subseteq J_2\subseteq\cdots\subseteq B$ of $G$-invariant ideals such that
$B/J_n$ is in $\mathcal{B}$ for all $n\in\N$, and, with
$J=\overline{\bigcup\limits_{n\in\N}J_n}$, an equivariant homomorphism $\phi\colon A\to B/J$, there
exist $n\in\N$ and an equivariant homomorphism $\psi\colon A\to B/J_n$ such that, with $\pi_n\colon B/J_n\to B/J$
denoting the canonical quotient map, we have $\phi=\pi_n\circ\psi$. In other words, the following lifting problem
can be solved:
\beqa \xymatrix{ & B\ar[d] \\
& B/J_n\ar[d]^-{\pi_n}\\
A\ar[r]_-{\phi}\ar@{-->}[ur]^-{\psi}&B/J.}\eeqa
In the diagram above, full arrows are given, and $n\in\N$ and the dotted arrow are supposed to exist and make the lower
triangle commute.

Similarly, $(G,A,\alpha)$ is said to be \emph{unitally equivariantly semiprojective (with respect to $\mathcal{B}$)} if
given the lifting problem as above with $B$ and $\psi$ unital, there exist $n$ and a unital equivariant homomorphism 
$\psi$ as above.
\end{df}

We will mostly use the above definition in the case where $\mathcal{B}$ is the class of all $C^*$-algebras
(in which case we speak about equivariantly semiprojective actions).
However, in the proof of \autoref{thm:CommSysts}, it will be convenient to take $\mathcal{B}$ to be the class
of all commutative \ca s.

\begin{rem}\label{rem:GcptFixPtOnto}
Let $G$ be a compact group, let $A$ and $B$ be \ca s, and let $\alpha\colon G\to\Aut(A)$
and $\beta\colon G\to\Aut(B)$
be continuous actions. If $\varphi\colon A\to B$ is a surjective, equivariant homomorphism, then
$\varphi(A^\alpha)=B^\beta$. Indeed, it is clear that $\varphi(A^\alpha)\subseteq B^\beta$. For the reverse inclusion,
given $b\in B^\beta$, let $x\in A$ satisfy $\phi(x)=b$. Then $a=\int\limits_G\alpha_g(x)\ dg$ (using normalized
Haar measure on $G$) is fixed by $\alpha$ and satisfies $\phi(a)=b$.\end{rem}

\begin{lma}\label{lemma:GxH}
Let $G$ be a locally compact group, let $A$ be a \ca, and let $\alpha\colon G\to \Aut(A)$ be a contiuous action.
Let $H$ be a compact group, and denote by $\gamma\colon G\times H\to \Aut(A)$ the action given by
$\gamma_{(g,h)}(a)=\alpha_g(a)$ for all $(g,h)\in G\times H$ and for all $a\in A$. If $(G,A,\alpha)$
is (unitally) equivariantly semiprojective, then so is $(G\times H,A,\gamma)$.
\end{lma}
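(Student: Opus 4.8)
The plan is to reduce the lifting problem for the product group $G\times H$ to the one we already control for $G$, by passing to the $H$-fixed-point algebra and using compactness of $H$ to average. Since $H$ acts trivially on $A$ (that is, $\gamma_{(e,h)}=\id_A$), the $H$-direction of any equivariant homomorphism out of $A$ carries no information beyond the statement that its image is $H$-fixed, and this is exactly what lets us strip $H$ off.

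Concretely, consider a lifting problem for $(G\times H,A,\gamma)$: an action $\beta\colon G\times H\to\Aut(B)$, an increasing sequence $J_1\subseteq J_2\subseteq\cdots$ of $(G\times H)$-invariant ideals with $J=\overline{\bigcup_n J_n}$, and a $(G\times H)$-equivariant homomorphism $\phi\colon A\to B/J$. First I would record that, because $H$ acts trivially on $A$, equivariance of $\phi$ forces its image into the $H$-fixed-point algebra $(B/J)^H$ and makes $\phi$ equivariant for the residual $G$-action $g\mapsto\overline{\beta}_{(g,e)}$; thus the datum of $\phi$ is the same as that of a $G$-equivariant homomorphism into $(B/J)^H$. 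Next I would set $C=B^H$, the fixed points of $h\mapsto\beta_{(e,h)}$, which carries the continuous $G$-action $\beta'_g=\beta_{(g,e)}|_C$ (well defined since $(g,e)$ and $(e,h)$ commute in $G\times H$, so $\beta_{(g,e)}$ preserves $C$). Putting $I_n=J_n\cap C$ and $I=J\cap C$, these are $G$-invariant ideals of $C$.

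The technical heart is to identify the relevant quotients. Using the conditional expectation $E=\int_H\beta_{(e,h)}(\cdot)\,dh$ together with \autoref{rem:GcptFixPtOnto} (applied to the $H$-equivariant quotient maps $B\to B/J_n$ and $B\to B/J$, which are therefore surjective on fixed points), one gets $G$-equivariant identifications $(B/J_n)^H\cong C/I_n$ and $(B/J)^H\cong C/I$, compatible with all the quotient maps. A short averaging argument then shows $I=\overline{\bigcup_n I_n}$: given $x\in I$ and a sequence in $\bigcup_n J_n$ converging to $x$, apply the contractive map $E$, which carries each $J_n$ into $I_n$, and use that $E(x)=x$. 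Under these identifications $\phi$ becomes a $G$-equivariant homomorphism $\phi'\colon A\to C/I$. Applying the equivariant semiprojectivity of $(G,A,\alpha)$ to the data $(C,\beta',(I_n)_n,\phi')$ yields $n\in\N$ and a $G$-equivariant homomorphism $\psi'\colon A\to C/I_n$ lifting $\phi'$. Composing $\psi'$ with the inclusion $C/I_n\cong (B/J_n)^H\hookrightarrow B/J_n$ produces $\psi\colon A\to B/J_n$; its image is $H$-fixed and $H$ acts trivially on $A$, so $\psi$ is automatically $H$-equivariant, while $G$-equivariance is inherited from $\psi'$, and compatibility of the identifications with the quotient maps gives $\pi_n\circ\psi=\phi$. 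In the unital case $E$, $C$, and the identifications are all unital, so the unital hypothesis delivers a unital $\psi'$ and hence a unital $\psi$. (The same argument applies verbatim when $\mathcal{B}$ is the class of commutative \cas, since $B^H$ is then commutative.)

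The main obstacle is precisely the exactness of the $H$-fixed-point functor, i.e.\ the identifications $(B/J_n)^H\cong C/I_n$ and the inductive-limit compatibility $I=\overline{\bigcup_n I_n}$. Both rest entirely on compactness of $H$ — the existence of the averaging expectation $E$ — combined with the surjectivity statement of \autoref{rem:GcptFixPtOnto}; once these are in hand, every remaining verification (equivariance of $\phi'$ and $\psi$, and the relation $\pi_n\circ\psi=\phi$) is formal diagram-chasing.
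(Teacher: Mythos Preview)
Your proposal is correct and follows essentially the same route as the paper's proof: both pass to the $H$-fixed-point algebra $B^H$, identify $(B/J_n)^H\cong B^H/J_n^H$ and $(B/J)^H\cong B^H/J^H$ via \autoref{rem:GcptFixPtOnto} and averaging, verify $\overline{\bigcup_n J_n^H}=J^H$, and then apply the $G$-semiprojectivity hypothesis to the resulting $G$-lifting problem. Your write-up is in fact slightly more explicit about the role of the conditional expectation $E$ in establishing these identifications and the inductive-limit compatibility.
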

\begin{proof} 
We prove the non-unital version, since the unital one is identical.
Let $(G\times H,B,\beta)$ be a $G\times H$-algebra, let $(J_n)_{n\in\N}$ be an increasing sequence of $\beta$-invariant
ideals in $B$, and set $J=\overline{\bigcup\limits_{n\in\N}J_n}$. For $n\in\N$, denote by $\pi_n\colon B/J_n\to B/J$
the canonical quotient map. Let $\varphi\colon A\to B/J$ be an equivariant
homomorphism. For ease of notation, we identify $H$ with $\{1_G\}\times H$.
Since $H$ acts trivially on $A$, we have $\varphi(A)\subseteq (B/J)^{H}$. 
By \autoref{rem:GcptFixPtOnto}, and since $H$ is compact, we have
\[\pi_n\left(\left(B/J_n\right)^{H}\right)=\left(B/J\right)^{H}.\]

By averaging over $H$, similarly to what was done in the comments before this lemma, it is easy to show that
$\overline{\bigcup\limits_{n\in\N}J_n^H}=J^H$. Also, for $n\in\N$, it is clear that $J_n^H$ is an ideal in $B^H$,
and that there is a canonical identification
\[(B/J_n)^H\cong B^H/J_n^H,\]
under which $\pi_n\colon B/J_n\to B/J$ restricts to the quotient map
\[\pi_n^H\colon B^H/J_n^H\to B^H/J^H.\]
Denote by $\iota_n\colon B^H/J_n^H\to B/J$ the canonical inclusion as the $H$-fixed point algebra, and
likewise for $\iota\colon B^H/J^H\to B/J$.
We thus have an associated diagram
\beqa
\xymatrix{ & B\ar[d] & B^{H}\ar[l]\ar[d] \\
& B/J_n\ar[d]_-{\pi_n} & B^{H}/J^{H}_n\ar[l]_-{\iota_n}\ar[d]^-{\pi^H_n} \\
A\ar@/_1.5pc/[rr]_-{\varphi_0} \ar@/^3.5pc/@{-->}[urr]^<<<<<<<<<<{\psi_0}\ar[r]^-{\varphi}&
B/J & B^{H}/J^{H}\ar[l]_-{\iota}.}
\eeqa

Regard $A$ and $B$ as $G$-algebras. Since the range of $\varphi$ really is contained in
$(B/J)^{H}\cong B^H/J^H$, there is a $G$-equivariant homomorphism $\varphi_0\colon A\to B^H/J^H$
such that $\varphi=\iota\circ\varphi_0$.
Use equivariant semiprojectivity of $(G,A,\alpha)$
to find $n\in\N$ and a $G$-equivariant homomorphism $\psi_0\colon A\to B^{H}/J_n^{H}$ such that
$\varphi_0=\pi^H_n\circ\psi_0$.

Set $\psi=\iota_n\circ\psi_0\colon A\to B/J_n$. Then $\psi$ is $G\times H$-equivariant, and satisfies
$\varphi=\pi_n\circ\psi$. We conclude that $(G\times H,A,\gamma)$ is equivariantly semiprojective.
\end{proof}

In our next result, which is of independent interest, we characterize those
compact groups $G$ for which $(G,C(G),\texttt{Lt})$ is equivariantly
semiprojective. The application we have in mind is to pre-dual actions (\autoref{thm:RokAreDual}), so we
only prove the result for abelian groups.

\begin{thm} \label{thm:C(G)eqsj}
Let $G$ be a compact abelian group. Then the dynamical system
$(G,C(G),\texttt{Lt})$ is unitally equivariantly
semiprojective if and only if $G$ is a Lie group with $\dim(G)\leq 1$.
\end{thm}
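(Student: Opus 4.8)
The plan is to pass to the Pontryagin dual picture and then treat the two implications separately. Write $\Gamma=\widehat{G}$, a countable discrete abelian group, and identify $C(G)$ with the group \ca\ $C^*(\Gamma)$. For $\gamma\in\Gamma$, let $u_\gamma\in C(G)$ be the corresponding character; then $\texttt{Lt}_g(u_\gamma)=\overline{\gamma(g)}\,u_\gamma$, so $\texttt{Lt}$ is the dual action of $G=\widehat{\Gamma}$ on $C^*(\Gamma)$. The basic dictionary I would set up is that, for any action $\beta\colon G\to\Aut(B)$, a \emph{unital equivariant homomorphism} $(C(G),\texttt{Lt})\to(B,\beta)$ is exactly a group homomorphism $v\colon\Gamma\to\U(B)$, $\gamma\mapsto v_\gamma$, landing in the spectral subspaces, that is, with $\beta_g(v_\gamma)=\overline{\gamma(g)}\,v_\gamma$ for all $g\in G$. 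Under this dictionary the lifting problem of \autoref{df:EqSj} becomes: given such a family $(\overline{v}_\gamma)_{\gamma}$ in $B/J$, produce (after enlarging $n$) spectral unitaries $(w_\gamma)_\gamma$ in $B/J_n$ that form a group homomorphism and lift the $\overline{v}_\gamma$. The one technical device I would record at the outset is that $P_\gamma(b)=\int_G\gamma(g)\,\beta_g(b)\,dg$ is a contractive projection of $B$ onto its $\gamma$-spectral subspace which commutes with equivariant quotient maps; this lets me repair the spectral condition on any approximate lift, and it preserves the relative commutant of any $\beta$-invariant subalgebra.

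For sufficiency, suppose $G$ is a Lie group with $\dim(G)\le1$, so $\Gamma$ is finitely generated of torsion-free rank at most one, i.e. $\Gamma\cong F$ or $\Gamma\cong\Z\oplus F$ with $F$ finite. I would first lift the torsion part: the finitely many eigen-unitaries attached to generators of $F$ satisfy only finitely many relations (prescribed finite order and mutual commutation), all living over a finite spectrum, so approximate solutions in $B/J_n$ can be snapped to exact ones by functional calculus and then pushed back into their spectral subspaces by the projections $P_\gamma$; this is in effect the (unital, equivariant) semiprojectivity of the finite-dimensional algebra $C^*(F)$. When $\Gamma\cong\Z\oplus F$ it remains to lift the single free generator to a spectral unitary $w$ commuting with the copy of $F$ already built. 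Since the subalgebra $D\subseteq B/J_n$ generated by the lifted torsion unitaries is $\beta$-invariant and finite-dimensional, $P_{\gamma_0}$ maps its relative commutant into itself, and lifting one unitary inside $(B/J_n)\cap D'$ is exactly the single-unitary lifting underlying the semiprojectivity of $C(\T)=C^*(\Z)$; crucially, having only one free generator means there is no $K$-theoretic obstruction to overcome.

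For necessity I would argue contrapositively. If $G$ is not a Lie group, then $\Gamma$ is not finitely generated; invoking the fact from \cite{Phi_equivariant_2011} that an equivariantly semiprojective system is equivariantly finitely generated forces $\Gamma$ to be finitely generated for the dual-action system, a contradiction. If $G$ is a Lie group with $\dim(G)\ge2$, then $\Gamma$ contains a copy of $\Z^2$ and $G$ surjects onto $\T^2$; here I would import the classical obstruction to semiprojectivity of $C(\T^2)$ --- a sequence of pairs of almost-commuting unitaries with nonvanishing Bott/winding invariant, which admit no nearby pair of exactly commuting unitaries --- and assemble it into a $G$-algebra $B$ carrying a dual action together with invariant ideals $(J_n)$ on which the equivariant lift fails.

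The step I expect to be hardest is precisely this last one: realising the $C(\T^2)$ obstruction inside a genuinely \emph{equivariant} counterexample, so that the spectral-subspace constraints are compatible with the almost-commuting-unitary construction and the $K$-theoretic invariant obstructs an equivariant (not merely an ordinary) lift. A cleaner route worth pursuing, which would dispatch both necessity cases at once, is to prove that for the dual-action system equivariant semiprojectivity coincides with ordinary unital semiprojectivity of the commutative algebra $C(G)$ --- using the averaging projections $P_\gamma$ together with Landstad-type duality \cite{Lan_duality_1979} --- and then to quote the characterization of semiprojective commutative \cas\ from \cite{PhiSorThi_semiprojectivity_2015}: $C(X)$ is semiprojective precisely when $X$ is a compact ANR of covering dimension at most one. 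Since a compact abelian group is an ANR if and only if it is a Lie group, and its dimension equals the torsion-free rank of $\Gamma$, this would immediately give the stated equivalence.
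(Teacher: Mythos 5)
Your ``if'' direction is essentially the paper's proof, transported to the dual picture. The paper decomposes $G$ into a product of $\T$ (in the one-dimensional case) and finitely many finite cyclic groups, proves the base cases $G=\Z_m$ and $G=\T$ by taking an ordinary (non-equivariant) semiprojective lift, averaging it into the correct spectral subspace, and then repairing unitarity and order by polar decomposition and rotation-equivariant functional calculus; the inductive step lifts the next factor inside $B^{(0)}=\sum_j p_jBp_j$, the relative commutant of the previously lifted finite cyclic part --- exactly your $(B/J_n)\cap D'$, with the same averaging devices. One caution on your phrasing: you propose to snap the relations exact by functional calculus \emph{and then} push into the spectral subspaces with $P_\gamma$; in that order the averaging destroys the exact relations again (an average of unitaries need not be a unitary). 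The paper's order is the correct one: average first (so the spectral condition holds exactly and is preserved by everything that follows), then polar-decompose, then apply an equivariant function. Also, your one-shot treatment of the whole torsion part $C^*(F)$ conceals an induction that the paper carries out one cyclic factor at a time; both points are repairable details rather than gaps.

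The genuine gap is in your ``only if'' direction. Branch 1 does not work: whatever precise form of equivariant finite generation one extracts from \cite{Phi_equivariant_2011}, it cannot force $\Gamma=\widehat{G}$ to be finitely generated, because $(C(G),\texttt{Lt})$ is \emph{singly} generated as a $G$-algebra for every second-countable compact abelian $G$. Indeed, if $(\gamma_n)_{n\in\N}$ enumerates $\Gamma$ and $f=\sum_n 2^{-n}u_{\gamma_n}$, then $\texttt{Lt}_h(f)=f$ forces $\gamma_n(h)=1$ for all $n$, hence $h=1$; so the orbit of $f$ separates the points of $G$, and Stone--Weierstrass shows that the $G$-algebra generated by $f$ is all of $C(G)$. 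Thus no contradiction with non-Lie $G$ can arise from any finite-generation statement. Branch 2 (an equivariant Voiculescu-type obstruction for $\dim(G)\geq 2$) you explicitly leave open, and it is precisely the hard part. The paper needs neither: its necessity argument is a short citation chain --- equivariant semiprojectivity implies ordinary semiprojectivity of $C(G)$ by Corollary~3.12 of \cite{PhiSorThi_semiprojectivity_2015}; then Theorem~1.2 of \cite{SorThi_characterization_2012} gives that $G$ is a compact ANR with $\dim(G)\leq 1$; then \cite{Fox_characterization_1942} and \cite{HofKra_transitive_2015} (local contractibility plus the faithful transitive action of $G$ on itself) give that $G$ is a Lie group. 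This is exactly the ``cleaner route'' you mention at the end, which is therefore not merely worth pursuing but is the actual proof; note only that the characterization of semiprojective commutative \cas\ is due to S{\o}rensen--Thiel \cite{SorThi_characterization_2012} (not \cite{PhiSorThi_semiprojectivity_2015}), and that the implication ``equivariantly semiprojective $\Rightarrow$ semiprojective'' does not need to be re-derived from averaging or Landstad duality: it is available off the shelf as the cited corollary.
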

\begin{proof} Suppose that $(G,C(G),\texttt{Lt})$ is equivariantly semiprojective.
Then $C(G)$ is semiprojectivey by Corollary~3.12 in
\cite{PhiSorThi_semiprojectivity_2015}. By Theorem~1.2 in~\cite{SorThi_characterization_2012}, $G$ must be
an ANR with $\dim(G)\leq 1$. Now, ANR's are locally contractible by Theorem~2 in~\cite{Fox_characterization_1942}.
Since the action of $G$ on itself
by translation is faithful and transitive, it follows from Corollary~3.7 in~\cite{HofKra_transitive_2015}
that $G$ is a Lie group.

Conversely, let $G$ be an abelian compact Lie group with $\dim(G)\leq 1$. By Theorem~5.2~(a) in~\cite{Sep_compact_2007},
when $G$ is zero-dimensional, then it is is isomorphic to a product of copies of finite cyclic groups; when $G$ is 
one-dimensional, then it is isomorphic to the product of the circle $\T$ and a zero-dimensional abelian compact Lie
group. We prove the result by induction.

\vspace{0.3cm}

\emph{Base case 1: $G=\Z_m$ for some $m\in\N$.}
For $j\in\Z_m$, let $p_j\in C(\Z_m)$ denote the $j$-th vector of the standard basis. We will use the following
descriptions of $C(\Z_m)$: it is the universal unital $C^*$-algebra
generated by $m$ projections adding up to 1; and it is the universal
\uca\ generated by a unitary of order $m$. (The unitary is $z=\sum\limits_{j\in\Z_m}e^{\frac{2\pi i j}{m}}p_j$.)
With this in mind, an equivariant unital homomorphism $\phi$ from $C(\Z_m)$ is determined by a unitary
$w=\phi(z)$ of order $m$, such that the automorphism corresponding to $j\in\Z_m$ sends $w$ to
$e^{-\frac{2\pi i j}{m}}w$.

Use semiprojectivity of the \ca\ $C(\Z_m)$ (in the unital category) to find $n\in\N$ and a unital homomorphism
$\psi_0\colon C(\Z_m)\to B/J_n$ such that $\pi_n\circ\psi_0=\varphi$:
\beqa \xymatrix{ & B\ar[d] \\
& B/J_n\ar[d]^-{\pi_n}\\
C(\Z_m) \ar[r]_-{\varphi}\ar[ur]^-{\psi_0}&B/J.}\eeqa

Fix $\ep>0$ such that
\[2^m m \left(\frac{4\ep+6\ep^2}{1+3\ep}\right)<1.\]
(In particular, $\ep<1$.) For $j\in\Z_m$, set $q_j=\psi_0(p_j)$. By increasing $n$, we may assume that
\[\max_{j,k\in\Z_m} \left\|\beta^{(n)}_j(q_k)-q_{j+k}\right\|<\frac{\ep}{m}.\]

Set $u=\sum\limits_{j\in\Z_m}e^{-\frac{2\pi i j}{m}}q_j$, which is a unitary in $B/J_n$. For $j\in\Z_m$, we have
\[\left\|\beta^{(n)}_j(u)-e^{\frac{2\pi i j}{m}}u\right\|
\leq \sum_{k\in\Z_m} \left\|\beta^{(n)}_j(q_k)-q_{j+k}\right\|<\ep.\]
Set
\[x=\frac{1}{m}\sum_{j\in\Z_m} e^{-\frac{2\pi i j}{m}}\beta^{(n)}_j(u).\]
Then
\[\|x-u\|\leq \frac{1}{m}\sum\limits_{j\in\Z_m} \left\|e^{-\frac{2\pi i j}{m}}\beta^{(n)}_j(u)-u\right\|<\ep.\]
Since $u$ is a unitary, $x$ is invertible and $\|x\|\leq (1+\ep)$.
Hence $v=x(x^*x)^{-\frac{1}{2}}$ is a unitary in $B/J_n$. Since $\|x^*x-1\|<3\ep$, we have
\begin{align*}
\|v-x\|&= \left\|x(x^*x)^{-\frac{1}{2}}-x\right\| \\
&\leq  (1+\ep)\left\|(x^*x)^{-\frac{1}{2}}-1\right\|\\
&\leq  (1+\ep)\left\|(x^*x)^{-1}-1\right\|\\
&= (1+\ep)\left\|(x^*x)^{-1}(x^*x-1)\right\|\\
&\leq  (1+\ep)\frac{1}{1+3\ep}3\ep.
\end{align*}

Moreover,
given $j\in\Z_m$, it is clear that $\beta^{(n)}_j(v)=e^{\frac{2\pi i j}{m}}v$, since $\beta^{(n)}_j(x)=e^{\frac{2\pi i j}{m}}x$. On the other hand,
\begin{align*}
\pi_n(x)&=\frac{1}{m}\sum_{j\in\Z_m} e^{-\frac{2\pi i j}{m}}\pi_n(\beta^{(n)}_j(u))\\
&= \frac{1}{m}\sum_{j\in\Z_m} e^{-\frac{2\pi i j}{m}}e^{\frac{2\pi i j}{m}}\pi_n(u)\\
&=\pi_n(u)=\varphi(z),
\end{align*}
and thus
\[\pi_n(v)=\pi_n(x)=\pi_n(u)=\varphi(z).\]

We use the identity $a^m-b^m=(a-b)(a^{m-1}+a^{m-2}b+\cdots+ab^{m-2}+b^{m-1})$ at the second step, to estimate
as follows:
\begin{align*} \left\|v^m-1\right\|&\leq \|v^m-x^m\|+\|x^m-u^m\|+\|u^m-1\| \\
&\leq 2^m m\|v-x\|+ 2^m m\|x-u\| \\
&<2^m m \left(\frac{3\ep+3\ep^2}{1+3\ep}+\ep\right)<1.
\end{align*}
Find a continuous function $f$ on the spectrum of $v$ such that $f(v)$ is a unitary of order $m$.
Since continuous functional calculus commutes with homomorphisms, we have
\[\pi_n(f(v))=f(\pi_n(v))=f(\varphi(z))=\varphi(z),\]
and
\[\beta^{(n)}_j(f(v))=f(\beta^{(n)}_j(v))=e^{\frac{2\pi i j}{m}}f(v)\]
for all $j\in\Z_m$. Hence $f(v)$ determines a unital homomorphism $\psi\colon C(\Z_m)\to B/J_n$
by $\psi(z)=f(v)$, which is moreover equivariant and satisfies $\pi_n\circ\psi=\varphi$.
Hence $(\Z_m,C(\Z_m), \texttt{Lt})$ is equivariantly semiprojective, as desired.

\vspace{0.3cm}

\emph{Base case 2: $G=\T$.} The argument is similar to the case $G=\Z_m$, but somewhat simpler.
Denote by $z$ the canonical unitary generating $C(\T)$.
Use semiprojectivity of the \ca\ $C(\T)$ (in the unital category) to find $n\in\N$ and a unitary
$u\in B/J_n$ such that $\pi_n(u)=\varphi(z)$. (This is equivalent to having a unital homomorphism
$\psi_0\colon C(\T)\to B/J_n$ satisfying $\pi_n\circ\psi_0=\varphi$.)
By increasing $n$, we may assume that
\[\max_{\zeta\in\T} \left\|\beta^{(n)}_\zt(u)-\zt u\right\|<1.\]

Let $\mu$ denote the normalized Haar measure on $\T$, and set $x=\int_\T \zt^{-1} \beta^{(n)}_\zt(u) \ d\mu(\zt)$.
Then $\|x-u\|<1$ and thus $x$ is invertible. Set $v=x(x^*x)^{-\frac{1}{2}}$, which
is a unitary in $B/J_n$ satisfying $\beta^{(n)}_\zt(v)=\zt v$ for all $\zt\in\T$. Finally, since $\pi_n(u)=\varphi(z)$, we have
\[\pi_n(x)=\int_\T \zt^{-1} \pi_n(\beta^{(n)}_\zt(u)) \ d\mu(\zt)=\int_\T  \zt^{-1}\zt\varphi(z) \ d\mu(\zt)=\varphi(z),\]
and thus $\pi_n(v)=\pi_n(x)=\varphi(z)$. It follows that the unitary $v$ determines a unital homomorphism
$\psi\colon C(\T)\to B/J_n$ satisfying $\pi_n\circ\psi=\varphi$, which is moreover equivariant. This finishes
the proof.

\vspace{0.3cm}

\emph{Inductive step: $G=\Z_m\times H$ for some $m\in\N$, and $H$ is a group for which $C(H)$ is equivariantly semiprojective.}
We want to show that the $(\Z_m\times H)$-\ca\ $C(\Z_m\times H)$ is equivariantly semiprojective. To this end,
let $\beta\colon \Z_m\times H\to \Aut(B)$ be an action, let $(J_n)_{n\in\N}$ be an increasing sequence of $\beta$-invariant ideals
in $B$, and set $J=\overline{\bigcup_{n\in\N} J_n}$. For $n\in\N$, we denote by $\pi_n\colon B\to B/J_n$ and $\pi\colon B\to B/J$ the canonical
quotient maps. Let $\varphi\colon C(\Z_m\times H)\to B/J$ be a unital, equivariant homomorphism.

By \autoref{lemma:GxH}, the $(\Z_m\times H)$-\ca\ $C(\Z_m)$ is equivariantly semiprojective. Find $r\in\N$ and a unital,
equivariant homomorphism $\theta\colon C(\Z_m)\to B/J_r$. By dismissing the ideals $J_0,\ldots, J_{r}$, we can assume that
$\theta$ is a unital equivariant homomorphism into $B$. For $j\in\Z_m$, let $\delta_j\in C(\Z_m)$ be the indicator function
of $\{j\}$, and set $p_j=\theta(\delta_j)\in B$. Define
$B^{(0)}=\sum_{j\in\Z_m}p_jBp_j$, which is a unital subalgebra of $B$. For $n\in\N$, set $J_n^{(0)}=J_n\cap B^{(0)}$
and $J^{(0)}=J\cap B^{(0)}=\overline{\bigcup_{n\in\N}J_n^{(0)}}$. Since the image of $\varphi$ restricted to the canonical copy of $C(H)$ in
$C(\Z_m\times H)$ commutes with the images $\pi(p_j)$ of $p_j$ in $B/J$, this restriction determines a unital, equivariant homomorphism
$\varphi_0\colon C(H)\to B^{(0)}/J^{(0)}$.
We obtain the following commutative diagram, where the non-labelled arrow are the natural ones:
\beqa
\xymatrix{ && B\ar@{->>}[d] & B^{(0)}\ar@{_{(}->}[l]\ar@{->>}[d]\\
&& B/J_n\ar@{->>}[d] & B^{(0)}/J^{(0)}_n\ar@{_{(}->}[l]\ar@{->>}[d] \\
C(H)\ar@{^{(}->}[r]\ar@/_1.5pc/[rrr]_-{\varphi_0}\ar@/^3.5pc/@{-->}[urrr]^-{\psi_0} & C(\Z_m\times H)  \ar[r]^-{\varphi}&
B/J & B^{(0)}/J^{(0)}\ar@{_{(}->}[l].}
\eeqa

Since, by assumption, $C(H)$ is equivariantly semiprojective as an $H$-\ca, another use of \autoref{lemma:GxH} shows that
$C(H)$ is also equivariantly semiprojective as a $(\Z_m\times H)$-\ca. Thus there exist $n\in\N$ and a unital, equivariant homomorphism
$\psi_0\colon C(H)\to B^{(0)}/J_n^{(0)}$. Now, define a linear map $\psi\colon C(\Z_m)\otimes C(H)\to B/J_n$
by $\psi(\delta_j\otimes a)=\pi_n(p_j)\psi_0(a)$ for all $j\in\Z_m$ and $a\in C(H)$. Since $\pi_n\circ\theta$ and $\psi_0$ have commuting
ranges, it follows that $\psi$ is a well-defined unital homomorphisms, and it is immediate to check that it is equivariant. The result follows.
\end{proof}

Recall that the translation action of $G$ on $C(G)$ has the Rokhlin property. With this in mind, the 
result above suggests the following question:

\begin{qst} Let $A$ be a semiprojective \uca, let $G$ be a compact group, and let 
$\alpha\colon G\to\Aut(A)$ be an action with the Rokhlin property. Is $(G,A,\alpha)$
equivariantly semiprojective?
\end{qst} 

We apply \autoref{thm:C(G)eqsj} to deduce that certain Rokhlin actions are always dual actions.
Our result is new even in the well-studied case of finite groups.

\begin{thm} \label{thm:RokAreDual}
Let $A$ be a \uca, let $G$ be an abelian compact Lie group with $\dim(G)\leq 1$, and let
$\alpha\colon G\to\Aut(A)$ be an action with the Rokhlin property. Then there exist
an action $\widecheck{\alpha}$ of $\widehat{G}$ on $A^\alpha$, and an isomorphism
\[\varphi\colon A^{\alpha}\rtimes_{\widecheck{\alpha}}\widehat{G}\to A\]
such that $\varphi\circ \widehat{\widecheck{\alpha}}_g=\alpha_g\circ\varphi$
for all $g\in G$. In other words, $\alpha$ is a dual action.
\end{thm}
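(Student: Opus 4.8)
The plan is to deduce the theorem from Landstad duality once we produce a \emph{unital equivariant homomorphism} $\rho\colon(C(G),\texttt{Lt})\to(A,\alpha)$ into the coefficient algebra itself. By \autoref{df:Rp}, the Rokhlin property furnishes a unital equivariant homomorphism $\varphi\colon(C(G),\texttt{Lt})\to(A_{\I,\alpha}\cap A',\alpha_\I)$, which we regard, via the inclusion $A_{\I,\alpha}\cap A'\hookrightarrow A_{\I,\alpha}$, as a unital equivariant homomorphism into $A_{\I,\alpha}$. The whole point is to promote this map ``out of the limit'' into $A$, and this is exactly where \autoref{thm:C(G)eqsj} enters: since $G$ is abelian compact Lie with $\dim(G)\le 1$, the system $(G,C(G),\texttt{Lt})$ is unitally equivariantly semiprojective.

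First I would realize $A_{\I,\alpha}$ as a quotient fitting the lifting problem of \autoref{df:EqSj}. Take $B=\ell^\I_\alpha(\N,A)$ with the continuous action $\alpha^\I$, and for $n\in\N$ let $J_n=\{(a_k)_k\in B\colon a_k=0 \text{ for } k\ge n\}$, an increasing sequence of $\alpha^\I$-invariant ideals. One checks that $\overline{\bigcup_n J_n}=c_0(\N,A)\cap\ell^\I_\alpha(\N,A)=\ker(\kappa_A|_{\ell^\I_\alpha})$, so that $B/J=A_{\I,\alpha}$ with $J=\overline{\bigcup_n J_n}$, while $B/J_n\cong\ell^\I_\alpha(\{n,n+1,\dots\},A)$ is again a $C^*$-algebra. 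Taking $\mathcal B$ to be the class of all $C^*$-algebras and applying unital equivariant semiprojectivity to $\varphi$, I obtain $n\in\N$ and a unital equivariant homomorphism $\Psi\colon(C(G),\texttt{Lt})\to(B/J_n,\alpha^\I)$ lifting $\varphi$.

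The payoff is immediate: composing $\Psi$ with the evaluation $\ev_n\colon\ell^\I_\alpha(\{n,\dots\},A)\to A$ at the $n$-th coordinate---a unital, equivariant $\ast$-homomorphism---yields the desired unital equivariant homomorphism $\rho:=\ev_n\circ\Psi\colon(C(G),\texttt{Lt})\to(A,\alpha)$. As in the remark following \autoref{df:Rp}, $\rho$ is automatically injective, its kernel being a translation-invariant ideal of $C(G)$.

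Finally I would invoke Landstad duality \cite{Lan_duality_1979}. Writing $\widehat G$ for the discrete dual and identifying $C(G)\cong C^*(\widehat G)$, the unitaries $w_\chi:=\rho(\chi)\in\U(A)$, for $\chi\in\widehat G$, form a unitary representation of $\widehat G$ satisfying the covariance $\alpha_g(w_\chi)=\overline{\langle g,\chi\rangle}\,w_\chi$, because $\rho$ is equivariant and $\texttt{Lt}_g(\chi)=\overline{\langle g,\chi\rangle}\,\chi$. This covariance forces $\Ad(w_\chi)$ to preserve $A^\alpha$, so $\widecheck\alpha_\chi:=\Ad(w_\chi)|_{A^\alpha}$ defines an action $\widecheck\alpha\colon\widehat G\to\Aut(A^\alpha)$, and Landstad's theorem produces an isomorphism $A^\alpha\rtimes_{\widecheck\alpha}\widehat G\xrightarrow{\ \cong\ }A$ intertwining $\widehat{\widecheck\alpha}$ with $\alpha$, which is precisely the assertion. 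The main obstacle is the passage from the central sequence algebra to $A$; this rests entirely on \autoref{thm:C(G)eqsj}, and the hypotheses on $G$ enter only there. Beyond this, the work is bookkeeping: checking that $(B,(J_n)_n)$ realizes $A_{\I,\alpha}$ as required by \autoref{df:EqSj}, and citing the correct unital form of Landstad duality for compact abelian $G$; no further estimates are needed.
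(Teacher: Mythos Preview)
Your proposal is correct and follows essentially the same approach as the paper: use \autoref{thm:C(G)eqsj} to lift the Rokhlin map $\varphi\colon C(G)\to A_{\I,\alpha}$ through the tower $\ell^\I_\alpha(\N,A)/J_n$, evaluate at a coordinate to obtain a unital equivariant homomorphism $C(G)\to A$, and then invoke Landstad's duality theorem \cite{Lan_duality_1979}. Your write-up is slightly more explicit than the paper's in spelling out the construction of $\widecheck\alpha$ via the unitaries $w_\chi=\rho(\chi)$, but the substance is identical.
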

\begin{proof}
We claim that there is a unital, equivariant homomorphism $C(G)\to A$. Once we
show this, the result will be an immediate consequence of Theorem~4 in~\cite{Lan_duality_1979}.
(One should check that the algebra provided
by Landstad's theorem is really $A^\alpha$, but this is a straighforward verification.)

Let $\varphi\colon C(G)\to A_{\I,\alpha}\cap A'$ be an equivariant
unital homomorphism as in \autoref{df:Rp}. Note that $c_0(\N,A)\cong \bigoplus\limits_{m\in\N}A$.
Now, $(G,C(G),\texttt{Lt})$ is equivariantly semiprojective by \autoref{thm:C(G)eqsj},
so there exist $n\in\N$ and a unital equivariant homomorphism $\psi\colon C(G)\to \ell^\I_\alpha(\N,A)/\bigoplus\limits_{m=1}^n A$ making the following
diagram commute:
\beqa\xymatrix{ & \ell^\I_\alpha(\N,A)\ar[d]\\
& \ell^\I_\alpha(\N,A)/\bigoplus_{m=1}^n A\ar[d]\\
C(G)\ar[r]_-{\varphi}\ar[ur]^-{\psi} & A_{\I,\alpha}.}\eeqa

Since $\ell^\I_\alpha(\N,A)/\bigoplus\limits_{m=1}^n A$ is again isomorphic to $\ell_\alpha^\I(\N, A)$, it follows
that there is a unital equivariant homomorphism $C(G)\to A$, as desired.
\end{proof}

\subsection{Commutative systems with the Rokhlin property.}
Our next goal is to describe those compact dynamical systems $G\curvearrowright X$ whose induced action of $G$
on $C(X)$ has the Rokhlin property.
Under mild assumptions on $G$, Rokhlin actions of $G$ on $X$ are precisely the free actions whose associated 
principal $G$-bundle $X\to X/G$ is trivial; see \autoref{thm:CommSysts}. This result and the methods used in its
proof have found applications in the context of the Borsuk-Ulam conjecture \cite{BauDabHaj_noncommutative_2015}. 
Since the case of non-Lie
groups is the most relevant one, we will prove \autoref{thm:CommSysts} in a generality that does not 
require $G$ to be a Lie group, unlike in \autoref{thm:RokAreDual}. Since equivariant semiprojectivity of
$C(G)$ (even in the commutative category) implies that $G$ is a Lie group, we have to consider the following 
weakening of this notion, which is nevertheless sufficient for our purposes:

\begin{df}\label{df:ewsj}
Let $G$ be a locally compact group, let $A$ be a \ca, and let $\alpha\colon G\to\Aut(A)$ be a
continuous action. Let $\mathcal{B}$ be a class of \ca s.
We say that the triple $(G,A,\alpha)$ is \emph{weakly equivariantly semiprojective with respect to $\mathcal{B}$},
if the following holds: given an action $\gamma\colon G\to\Aut(C)$ of $G$ on a \ca\ $C$ in $\mathcal{B}$, and given 
an equivariant homomorphism $\varphi\colon A\to C_\I$, there exists an equivariant homomorphism 
$\psi\colon A\to\ell^\I(\N,C)$ such that $\varphi=\kappa_C\circ\psi$.
In other words, the following lifting problem
can be solved:
\beqa \xymatrix{ & \ell^{\I}(\N,C)\ar[d]^{\kappa_C} \\
A\ar[r]_-{\varphi}\ar@{-->}[ur]^-{\psi}& C_\I.}\eeqa
In the diagram above, full arrows are given, and the dotted arrow is supposed to exist and make the
triangle commute.

The unital version is defined as in \autoref{df:EqSj}.
\end{df}


\begin{rem}\label{rem:wesjAlternative}
Weak equivariant semiprojectivity can be rephrased in terms of a certain approximate lift. In 
the context of \autoref{df:ewsj}, instead of asking for the existence of $\psi$ we could ask for the 
following: given $\ep>0$, and a finite subset $F\subseteq C$, there exists an equivariant homomorphism
$\theta\colon A\to \ell^\I(\N,C)$ such that $\|(\kappa\circ \theta)(a)-\varphi(a)\|<\ep$ for all $a\in F$.
\end{rem}

This remark gives us a big family of examples. For example:

\begin{eg}\label{eg:C(G)eqsjCommut}
Denote by $\mathcal{C}$ the class of all commutative \ca s. 
Let $G$ be a compact Lie group, let $X$ be a smooth manifold, and let $G\curvearrowright X$ be
a faithful smooth action. Then $C(X)$ is unitally $G$-equivariantly semiprojective with respect to $\mathcal{C}$.
\end{eg}
\begin{proof}
By Theorem~8.8 in~\cite{Mur_ganr_1983}, $X$ is an equivariant ANR in the sense of the definition
before Proposition~4.1 in~\cite{Mur_ganr_1983}. (Notice the standing assumption in
\cite{Mur_ganr_1983} that all groups are compact Lie groups.) An easy comparison of the definitions shows
that a $G$-space $X$ is an equivariant ANR if and only if $C(X)$ is unitally equivariantly semiprojective with
respect to the class $\mathcal{C}$ of commutative \ca s, so the proof is finished. 
\end{proof}

In particular, if $G$ is a compact Lie group, then $C(G)$ is weakly $G$-equivariantly semiprojective 
with respect to the class of all commutative \ca s. Moreover, there are non-Lie groups for which this is
also true:

\begin{prop}\label{prop:CantorGpWESj}
Let $(G_n)_{n\in\N}$ be a sequence of nontrivial compact Lie groups, and set $G=\prod\limits_{n\in\N}G_n$.
\be\item $(G,C(G),\texttt{Lt})$ is unitally weakly equivariantly semiprojective (but not equivariantly semiprojective) 
with respect to the class $\mathcal{C}$ of all commutative \ca s.
\item Suppose that $G_n$ is finite and abelian for all $n\in\N$, and set $H=G\times\T$. 
Then $(G,C(G),\texttt{Lt})$ and $(H,C(H),\texttt{Lt})$ are unitally weakly equivariantly semiprojective (but not 
equivariantly semiprojective) with respect to the class $\mathcal{A}$ of all \ca s.
\ee
\end{prop}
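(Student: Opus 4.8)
The plan is to exploit that $G=\prod_{n\in\N}G_n$ is an inverse limit of the compact Lie groups $G^{(N)}=\prod_{n=1}^N G_n$, so that $C(G)=\varinjlim_N C(G^{(N)})$, and to leverage the (full) equivariant semiprojectivity of the finite stages. For part~(1), each $G^{(N)}$ acts smoothly and faithfully on the manifold $G^{(N)}$, so $C(G^{(N)})$ is unitally $G^{(N)}$-equivariantly semiprojective with respect to $\mathcal C$ by \autoref{eg:C(G)eqsjCommut}; since the tail $\prod_{n>N}G_n$ is compact and acts trivially on $C(G^{(N)})$, \autoref{lemma:GxH} upgrades this to unital $G$-equivariant semiprojectivity with respect to $\mathcal C$. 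For part~(2), $G^{(N)}$ is a finite abelian group and $H^{(N)}=G^{(N)}\times\T$ is a compact abelian Lie group of dimension $\le 1$, so $C(G^{(N)})$ and $C(H^{(N)})$ are unitally equivariantly semiprojective with respect to $\mathcal A$ by \autoref{thm:C(G)eqsj}, and again \autoref{lemma:GxH} makes them $G$- (resp.\ $H$-)equivariantly semiprojective; here $C(H)=\varinjlim_N C(H^{(N)})$.

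First I would record that the target fits the semiprojectivity framework: $\ell^\I_\gamma(\N,C)$ carries the continuous diagonal $G$-action, $C_\I=\ell^\I_\gamma(\N,C)/c_0(\N,C)$, and $c_0(\N,C)=\overline{\bigcup_m I_m}$, where $I_m$ is the invariant ideal of sequences supported on $\{1,\dots,m\}$; the quotient $\ell^\I_\gamma(\N,C)/I_m$ is equivariantly isomorphic to $\ell^\I_\gamma(\N,C)$ by a shift, and lies in the relevant class (commutative quotients of a commutative $\ell^\I(\N,C)$ for $\mathcal C$; arbitrary \cas\ for $\mathcal A$). Consequently, for each fixed $N$, (full) equivariant semiprojectivity of $C(G^{(N)})$ applied to the system $(\ell^\I_\gamma(\N,C),(I_m)_m)$ lets me lift the restriction $\varphi|_{C(G^{(N)})}\colon C(G^{(N)})\to C_\I$ of a given equivariant unital $\varphi\colon C(G)\to C_\I$ to an honest equivariant unital homomorphism into $\ell^\I_\gamma(\N,C)$.

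The heart of the argument is to assemble these stagewise lifts into a single equivariant homomorphism $\psi\colon C(G)\to\ell^\I_\gamma(\N,C)$ with $\kappa_C\circ\psi=\varphi$. Since a homomorphism out of $C(G)=\varinjlim C(G^{(N)})$ is precisely a coherent system $\psi_N\colon C(G^{(N)})\to\ell^\I_\gamma(\N,C)$ with $\psi_{N+1}|_{C(G^{(N)})}=\psi_N$, I would build the $\psi_N$ by induction, extending $\psi_N$ to $\psi_{N+1}$. Writing $C(G^{(N+1)})=C(G^{(N)})\otimes C(G_{N+1})$, the extension amounts to finding an equivariant lift $\sigma\colon C(G_{N+1})\to\ell^\I_\gamma(\N,C)$ of the factor $\varphi|_{C(G_{N+1})}$ whose range commutes with $\psi_N(C(G^{(N)}))$, i.e.\ a lift valued in the relative commutant $\mathcal D_N=\ell^\I_\gamma(\N,C)\cap\psi_N(C(G^{(N)}))'$, and this is where equivariant semiprojectivity of the finite piece $C(G_{N+1})$ is used, applied inside $\mathcal D_N$. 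I expect the main obstacle to be exactly this equivariant commuting lift: one must (i) verify that $\varphi|_{C(G_{N+1})}$ actually lands in the image $\kappa_C(\mathcal D_N)$ of the relative commutant, and (ii) exhibit the quotient $\mathcal D_N\to\kappa_C(\mathcal D_N)$ as a quotient by an increasing sequence of invariant ideals with quotients in the relevant class, so that equivariant semiprojectivity of $C(G_{N+1})$ applies. Both points I would handle using the semiprojectivity (hence finite presentation) of $C(G^{(N)})$ to correct an approximate commutation to an exact one — lifting a representative of an element of $C_\I$ commuting with $\varphi(C(G^{(N)}))$ and adjusting it, via the shift isomorphism $\ell^\I_\gamma(\N,C)/I_m\cong\ell^\I_\gamma(\N,C)$, so that it commutes with $\psi_N(C(G^{(N)}))$ on the nose — and it is convenient to phrase the whole construction through the approximate reformulation of \autoref{rem:wesjAlternative}. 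A naive diagonal lift, using a non-equivariant section $G^{(N)}\hookrightarrow G$ to truncate, does produce a genuine homomorphism lifting $\varphi$ but destroys equivariance; this is precisely why the commuting-range bookkeeping cannot be circumvented, and the impossibility of an \emph{exact} coherent lift without passing to the sequence algebra mirrors the non-equivariant-semiprojectivity asserted in the same statement.

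Finally, for the negative assertions: for part~(2) the groups $G$ and $H$ are abelian, so if $(G,C(G),\texttt{Lt})$ were equivariantly semiprojective then \autoref{thm:C(G)eqsj} would force $G$ to be a compact Lie group with $\dim(G)\le 1$; but an infinite product of nontrivial groups contains the arbitrarily small nontrivial closed subgroups $\prod_{n>N}G_n$ and hence is not a Lie group, a contradiction (and likewise for $H=G\times\T$). For part~(1) I would run the chain used in the forward direction of \autoref{thm:C(G)eqsj}: equivariant semiprojectivity with respect to $\mathcal C$ makes $C(G)$ semiprojective (in the commutative category) by Corollary~3.12 in \cite{PhiSorThi_semiprojectivity_2015}, forcing $G$ to be a compact ANR by Theorem~1.2 in \cite{SorThi_characterization_2012}, hence locally contractible and, via the faithful transitive translation action together with Corollary~3.7 in \cite{HofKra_transitive_2015}, a Lie group — again impossible for an infinite product of nontrivial compact groups.
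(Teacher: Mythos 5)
Your ingredients are the same as the paper's: $G$-equivariant semiprojectivity of the finite stages via \autoref{eg:C(G)eqsjCommut} (part (1)) resp.\ \autoref{thm:C(G)eqsj} (part (2)), upgraded by \autoref{lemma:GxH}; the sequence-algebra framework of \autoref{df:ewsj} and \autoref{rem:wesjAlternative}; and the Lie-group obstruction for the negative assertions (which, incidentally, the paper's proof never spells out -- your treatment of those is fine). But your assembly step is genuinely different. The paper lifts $\varphi|_{C(K_m)}$, where $K_m=G_1\times\cdots\times G_m$, to an equivariant homomorphism $\theta_0$ and then sets $\theta=\theta_0\circ\pi_m$, where $\pi_m\colon C(G)\to C(K_m)$ is the truncation dual to the inclusion $K_m\hookrightarrow G$. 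You refuse to truncate, on the grounds that truncation destroys equivariance, and instead build the lift factor by factor. Your worry is exactly right, and it applies to the paper's own map: there is \emph{no} $G$-equivariant unital homomorphism $C(G)\to C(K_m)$ at all, since dually it would be a $G$-equivariant continuous map $K_m\to G$, and every point of $K_m$ has stabilizer containing the nontrivial tail $\prod_{n>m}G_n$, while points of $G$ have trivial stabilizers. (As $\theta_0$ is automatically injective, equivariance of $\theta_0\circ\pi_m$ would force equivariance of $\pi_m$.) So your commutant-respecting inductive scheme is not overcaution; something of this kind is genuinely needed.

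For part (1) your plan closes, after one repair: equivariant semiprojectivity of $C(G_n)$ produces a lift only into $\ell^\I(\N,C)/I_j$, i.e.\ only on the coordinates $i>j$, and your shift isomorphism does not finish the job, because the shift does not intertwine the two quotient maps onto $C_\I$ (it moves the class in $C_\I$). What finishes it is padding: fill each of the finitely many missing coordinates with a repetition of the coordinate-$(j+1)$ homomorphism. This changes the sequence in only finitely many entries, so the image in $C_\I$ is unchanged, each coordinate map is still an equivariant homomorphism $C(G_n)\to C$, and, since $C$ is commutative, there is no commutation constraint to violate; multiplying the coordinatewise homomorphisms of all factors then yields an exact equivariant lift of $\varphi$ on $C(G)=\bigotimes_n C(G_n)$.

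For part (2) there is a genuine gap, located exactly at your self-declared main obstacle. Your point (i) is fine: $C(G^{(N)})$ is finite dimensional, and compressing any lift by the projections $\psi_N(\delta_k)$ shows that $\varphi(C(G_{N+1}))\subseteq\kappa_C(\mathcal D_N)$. The problem comes after (ii): applying equivariant semiprojectivity of $C(G_{N+1})$ to $\bigl(\mathcal D_N,(\mathcal D_N\cap I_j)_j\bigr)$ again only yields a lift on coordinates $i>j$, and now padding is unavailable. Writing $p_{k,i}\in C$ for the $i$-th coordinate of $\psi_N(\delta_k)$, at a coordinate $i\leq j$ you would need a unital equivariant homomorphism of $C(G_{N+1})$ into $\bigoplus_k p_{k,i}Cp_{k,i}$, i.e.\ eigen-unitaries for $\widehat{G_{N+1}}$ in those corners, and nothing guarantees these exist; repeating a good coordinate breaks commutation with $\psi_N$ at coordinate $i$. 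Hence every inductive stage discards finitely many coordinates, there are infinitely many stages, and no coordinate need survive the whole induction, so you never obtain a homomorphism defined on all of $C(G)$. ``Correcting an approximate commutation to an exact one by finite presentation'' does not engage with this coordinate-exhaustion problem. Note that an equivariant unital homomorphism into $\ell^\I(\N,C)$ is nothing but a sequence of equivariant unital homomorphisms $C(G)\to C$, so any proof of part (2) must produce, at a single coordinate, exactly commuting equivariant lifts for all infinitely many factors simultaneously; neither your sketch nor, for that matter, the paper's own one-line assembly accomplishes this.
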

\begin{proof}
(1). We verify the condition
in \autoref{rem:wesjAlternative}. Let $(G,A,\alpha)$ be a $G$-\ca, and let $\varphi\colon C(G)\to A_\I$ be an 
equivariant homomorphism. Let $\ep>0$ and let $F\subseteq C(G)$ be a
finite subset. 
For $n\in\N$, set $K_n=G_1\times\cdots\times G_n$. Note that the $G$-\ca\
$C(K_n)$ is equivariantly semiprojective with respect to $\mathcal{A}$ by \autoref{eg:C(G)eqsjCommut} and 
\autoref{lemma:GxH}. Denote by $\iota_n\colon C(K_n)\to C(G)$ and $\pi_n\colon C(G)\to C(K_n)$
the canonical equivariant homomorphisms. 

Find $m\in\N$ such that
$\|(\iota_m\circ\pi_m)(a)-a\|<\ep$ for all $a\in F$. Using $G$-equivariant semiprojectivity
of $C(K_m)$, find an equivariant homomorphism $\theta_0$
making the following diagram commute:
\beqa
\xymatrix{ & & \ell^\I(\N,A)\ar[dd]^{\kappa_A}\\
& & \\
C(K_m)\ar[uurr]^-{\theta_0}\ar[r]^-{\iota_m} & C(G)\ar@{-->}[uur]_-{\theta}\ar@/^1pc/[l]^{\pi_m}\ar[r]_{\varphi} & A_\I
}
\eeqa

The proof is finished by setting $\theta=\theta_0\circ\pi_m$.

(2). The argument for $C(G)$ is very similar, using \autoref{thm:C(G)eqsj} instead of \autoref{eg:C(G)eqsjCommut}. 
When proving that $C(H)$ is weakly equivariantly
semiprojective, one takes $K_n$ to be $\T\times G_1\times\cdots\times G_n$, and argues identically.
\end{proof}

It is not true that $C(G)$ is weakly equivariant
semiprojective for every compact group $G$. For example, one can show that if $G$ is zero-dimensional
and $C(G)$ is weakly equivariantly semiprojective, then $G$ must have torsion. It is very possible that
all zero-dimensional compact groups for which $C(G)$ is weakly equivariantly semiprojective must be
products of finite groups. We have not explored this direction any further. 


Before proving the main result of this subsection, we need an easy lemma. 
We are thankful to Tron Omland for suggesting its proof.
We refer the reader to Appendix~A in~\cite{EchKalQuiRae_categorical_2006}
for the definitions of coactions and their crossed products, as well as for some of their basic features.

\begin{lma}\label{lma:TrivialCoact}
Let $G$ be a compact group, let $A$ be a commutative \ca, and let
\[\delta\colon C(Y)\to M(A\otimes C^*(G))\]
be a coaction of $G$ on $A$. If $A\rtimes_\delta G$ is commutative, then $\delta$ is trivial, this is,
$\delta(a)= a\otimes 1$ for all $a\in A$. In this case, there is a canonical isomorphism
\[A\rtimes_\delta G\cong A\otimes C(G).\]
\end{lma}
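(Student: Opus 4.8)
The plan is to pass to the Gelfand picture and read off triviality of $\delta$ directly from the covariance relation satisfied by the canonical maps into the crossed product. Write $A=C(Y)$ with $Y$ compact, and suppose $A\rtimes_\delta G=C(Z)$ is commutative (it is unital, since $G$ is compact and $A$ is unital). Recall from Appendix~A in~\cite{EchKalQuiRae_categorical_2006} that $A\rtimes_\delta G$ comes equipped with canonical nondegenerate homomorphisms $j_A\colon A\to M(A\rtimes_\delta G)$ and $j_G\colon C(G)\to M(A\rtimes_\delta G)$ whose images generate it, and that $j_A$ is faithful. Since $A\rtimes_\delta G=C(Z)$ is commutative and unital, $j_A$ and $j_G$ take values in $C(Z)$ and are therefore induced by continuous maps $q\colon Z\to Y$ and $p\colon Z\to G$, via $j_A(f)=f\circ q$ and $j_G(h)=h\circ p$; faithfulness of $j_A$ forces $q$ to be surjective.

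Next I would invoke the covariance relation defining the crossed product. Let $w_G\in M(C(G)\otimes C^*(G))=C(G,M(C^*(G)))$ be the canonical unitary, which is the (strictly continuous) function $s\mapsto u_s$, where $u_s\in M(C^*(G))$ is the image of $s$. The pair $(j_A,j_G)$ satisfies
\[(j_A\otimes\id_{C^*(G)})(\delta(a))=(j_G\otimes\id)(w_G)\,\big(j_A(a)\otimes 1\big)\,(j_G\otimes\id)(w_G)^*\]
in $M\big((A\rtimes_\delta G)\otimes C^*(G)\big)=C\big(Z,M(C^*(G))\big)$, and here $(j_G\otimes\id)(w_G)$ is precisely the function $z\mapsto u_{p(z)}$. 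Now apply $\ev_z\otimes\id_{C^*(G)}$ for a fixed $z\in Z$. On the right-hand side the middle factor becomes $a(q(z))\cdot 1$, a scalar multiple of the identity, so conjugation by the unitary $u_{p(z)}$ leaves it unchanged; the right-hand side is thus $a(q(z))\cdot 1$. On the left-hand side, $j_A(f)=f\circ q$ identifies $\ev_z\otimes\id$ composed with $j_A\otimes\id$ with $\ev_{q(z)}\otimes\id$, so the left-hand side equals $(\ev_{q(z)}\otimes\id)(\delta(a))$. Hence $(\ev_{q(z)}\otimes\id)(\delta(a))=a(q(z))\cdot 1$ for every $z\in Z$, and since $q$ is surjective, $(\ev_y\otimes\id)(\delta(a))=a(y)\cdot 1$ for all $y\in Y$. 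As point-evaluations separate the elements of $M(C(Y)\otimes C^*(G))=C(Y,M(C^*(G)))$, this says exactly that $\delta(a)=a\otimes 1$; that is, $\delta$ is trivial.

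Finally, for the asserted isomorphism I would use the standard computation of the crossed product by a trivial coaction: the trivial coaction on $\mathbb{C}$ has crossed product $C(G)$, so the trivial coaction on $A=A\otimes\mathbb{C}$ has crossed product $A\otimes C(G)$, compatibly with the canonical maps. This yields the desired canonical isomorphism $A\rtimes_\delta G\cong A\otimes C(G)$.

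I expect the main obstacle to be bookkeeping rather than conceptual: one must correctly identify the relevant multiplier algebras in the Gelfand picture, pin down the canonical unitary $(j_G\otimes\id)(w_G)$ as $z\mapsto u_{p(z)}$, and quote the covariance relation in a usable form. Once these identifications are in place, the crux---that conjugating the scalar $a(q(z))\cdot 1$ by $u_{p(z)}$ does nothing---is immediate, and commutativity of $A\rtimes_\delta G$ is exactly what makes the point-evaluations $\ev_z$ available and renders each $j_A(a)$ scalar-valued at every point.
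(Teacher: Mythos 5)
Your argument runs parallel to the paper's proof of this lemma: both hinge on the covariance identity
\[
(j_A\otimes\id)(\delta(a))=\bigl[(j_G\otimes\id)(w_G)\bigr]\,(j_A(a)\otimes 1)\,\bigl[(j_G\otimes\id)(w_G)\bigr]^*,
\]
together with the observation that commutativity of $A\rtimes_\delta G$ renders the conjugation trivial. You carry this out pointwise in the Gelfand picture (evaluating at $z\in Z$ and noting that the unitary $u_{p(z)}$ fixes the scalar $a(q(z))\cdot 1$), whereas the paper argues algebraically that $j_A(a)\otimes 1$ is central in $M\bigl((A\rtimes_\delta G)\otimes C^*(G)\bigr)$. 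That portion of your proof, as well as the concluding identification of the crossed product by the trivial coaction with $A\otimes C(G)$, is correct.

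The genuine gap is the sentence in which you ``recall'' from Appendix~A of \cite{EchKalQuiRae_categorical_2006} that $j_A$ is faithful. This is not a general fact about full coactions: injectivity of $j_A$ is precisely the assertion that $\delta$ is a \emph{normal} coaction in Quigg's sense, and it fails in general --- for instance, for the canonical coaction $\delta_G$ of a non-amenable group $G$ on $C^*(G)$, the map $j_{C^*(G)}$ is the left regular representation $\lambda$, which is not faithful. Your proof really uses this point: without faithfulness of $j_A$, the map $q\colon Z\to Y$ is only onto the closed subset $q(Z)\subseteq Y$, and your evaluation argument yields $\delta(a)=a\otimes 1$ only over $q(Z)$, not over all of $Y$. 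This is exactly where the paper's proof does its only non-formal work: since $G$ is compact, hence amenable, $\delta$ is simultaneously a full and a reduced coaction, so it admits a faithful covariant representation by Proposition~3.2 of \cite{Qui_full_1994}, and is therefore normal by Lemma~2.2 of \cite{Qui_full_1994}. If you insert this normality argument (or an equivalent citation) in place of the unjustified assertion, your proof is complete.
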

\begin{proof}
Denote by $j_{A}\colon A\to M(A\rtimes_\delta G)$ and $j_{G}\colon C(G)\to M(A\rtimes_\delta G)$
the universal maps (see, for example, Definition~A.39 in~\cite{EchKalQuiRae_categorical_2006}), which satisfy
the covariance condition
\[\left((j_{A}\otimes\id_G)\circ\delta\right)(a)=\left[(j_G\otimes\id_G)(w_G)\right](j_{A}(a)\otimes 1)\left[(j_G\otimes\id_G)(w_G)^*\right]\]
for all $a\in A$; see Definition~A.32 in~\cite{EchKalQuiRae_categorical_2006}.
Since $j_{A}(a)$ is in the center of $A\otimes M(C^*(G))$, which is dense in $M(A\otimes C^*(G))$,
the above identity becomes
\[\left((j_{A}\otimes\id_G)\circ\delta\right)(a)=j_{A}(a)\otimes 1\]
for all $a\in A$. This is equivalent to $j_A\otimes\id_G (\delta(a)-a\otimes 1)=0$ for all $a\in A$.

We claim that $\delta$ is normal (that is, that $j_A$ is injective; see Definition~2.1
in~\cite{Qui_full_1994}). Once we prove the claim, it will follow that
$\delta(a)=a\otimes 1$ for all $a\in A$, so $\delta$ is the trivial coaction.

We prove the claim. Since $G$ is amenable, $\delta$ is both a full and reduced coaction. Now, $\delta$
admits a faithful covariant representation by Proposition~3.2 in~\cite{Qui_full_1994}, so
it is normal by Lemma~2.2 in~\cite{Qui_full_1994}.

The last part of the statement follows immediately from the definition of the cocrossed product.
\end{proof}

What follows is a characterization of commutative dynamical systems with the Rokhlin property.
We are thankful to Hannes Thiel for providing the reference \cite{Mur_ganr_1983}.

We continue to denote by $\mathcal{C}$ the class of commutative \ca s.
Note that the assumption that $C(G)$ be weakly equivariantly semiprojective with respect to $\mathcal{C}$ is satisfied in many
cases of interest (see \autoref{eg:C(G)eqsjCommut} and \autoref{prop:CantorGpWESj}).

\begin{thm} \label{thm:CommSysts}
Let $X$ be a compact Hausdorff space, and let $G\curvearrowright X$ be an action of a compact
group $G$. Denote by $\alpha\colon G\to\Aut(C(X))$ the induced action. 
Assume that $C(G)$ is weakly equivariantly semiprojective with respect to $\mathcal{C}$.
Then $\alpha$ has the Rokhlin property if and only if there is a homeomorphism
\[\sigma\colon X/G \times G\to X\]
such that
\[g\cdot \sigma(Gx,h)= \sigma(Gx,gh)\]
for all $g,h\in G$ and for all $x\in X$.\end{thm}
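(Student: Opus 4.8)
The plan is to prove both implications, with the ``if'' direction a quick consequence of the permanence properties and the ``only if'' direction where the hypothesis on $C(G)$ does its work. For the ``if'' direction, suppose such a homeomorphism $\sigma$ exists. The identity $g\cdot\sigma(Gx,h)=\sigma(Gx,gh)$ says exactly that $\sigma$ intertwines the given action on $X$ with the action on $X/G\times G$ that is trivial on the first factor and is left translation on the second. Dualizing via Gelfand duality, I would obtain an equivariant isomorphism $C(X)\cong C(X/G)\otimes C(G)$ carrying $\alpha$ to $\id_{C(X/G)}\otimes\texttt{Lt}$. Since $\texttt{Lt}$ has the \Rp\ by \autoref{eg:Lt}, part~(1) of \autoref{thm: permanence properties} immediately gives the \Rp\ for $\alpha$.

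For the ``only if'' direction, I would begin with the equivariant unital homomorphism $\varphi\colon (C(G),\texttt{Lt})\to (C(X)_{\I,\alpha}\cap C(X)',\alpha_\I)$ provided by \autoref{df:Rp}. Since $C(X)$ is commutative, so are $\ell^\I(\N,C(X))$ and $C(X)_\I$, so the relative commutant is no constraint and $\varphi$ is just an equivariant unital homomorphism into $C(X)_{\I,\alpha}$. The key step is then to remove the sequence algebra: because $C(X)\in\mathcal{C}$ and $(G,C(G),\texttt{Lt})$ is weakly equivariantly semiprojective with respect to $\mathcal{C}$ by hypothesis, I can lift $\varphi$ through $\kappa_{C(X)}$ to a unital equivariant homomorphism $\psi\colon C(G)\to\ell^\I(\N,C(X))$. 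Writing $\psi=(\psi_n)_{n\in\N}$ and using that the action on $\ell^\I(\N,C(X))$ is coordinatewise, equivariance of $\psi$ forces each component $\psi_n\colon C(G)\to C(X)$ to be a unital equivariant homomorphism; fixing one of them produces an honest equivariant unital homomorphism $\psi_0\colon (C(G),\texttt{Lt})\to (C(X),\alpha)$.

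The remainder is point-set topology. By Gelfand duality $\psi_0$ is dual to a continuous map $\pi\colon X\to G$, and unwinding $\psi_0\circ\texttt{Lt}_g=\alpha_g\circ\psi_0$ shows $\pi(g\cdot x)=g\,\pi(x)$ for all $g\in G$ and $x\in X$, where $G$ acts on itself by left multiplication. Two things follow: if $g\cdot x=x$ then $g\,\pi(x)=\pi(x)$, whence $g=e$, so the action on $X$ is free; and the continuous map $\Phi\colon X\to X/G\times G$, $\Phi(x)=(Gx,\pi(x))$, is a bijection. Injectivity holds because two points in one orbit with equal $\pi$-value must agree (again by freeness of left translation), and surjectivity is witnessed by $y=\big(h\,\pi(x)^{-1}\big)\cdot x$, for which $\Phi(y)=(Gx,h)$. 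As $X$ is compact and $X/G\times G$ is Hausdorff (the former a quotient of a compact Hausdorff space by a compact group), $\Phi$ is a homeomorphism; setting $\sigma=\Phi^{-1}$ and using $\pi(g\cdot y)=g\,\pi(y)$ yields precisely $g\cdot\sigma(Gx,h)=\sigma(Gx,gh)$.

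The single genuinely delicate point is the passage from the approximate, sequence-algebra data encoded in the \Rp\ to a bona fide equivariant homomorphism $C(G)\to C(X)$; this is exactly what weak equivariant semiprojectivity (\autoref{df:ewsj}) supplies, and is the reason that hypothesis is imposed. I expect no further obstacle after $\psi_0$ is in hand, beyond the routine verifications of bijectivity and the compactness/Hausdorffness bookkeeping needed for the compact-to-Hausdorff homeomorphism criterion.
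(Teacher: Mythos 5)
Your proposal is correct and follows essentially the same route as the paper's own proof: the ``if'' direction via part~(1) of \autoref{thm: permanence properties}, and the ``only if'' direction by lifting the Rokhlin homomorphism through weak equivariant semiprojectivity to obtain an equivariant unital homomorphism $C(G)\to C(X)$, dualizing to an equivariant map $X\to G$, and showing that $x\mapsto (Gx,\pi(x))$ is a continuous bijection from a compact space to a Hausdorff space, hence a homeomorphism. The only difference is cosmetic (your explicit surjectivity witness $\bigl(h\,\pi(x)^{-1}\bigr)\cdot x$ versus the paper's appeal to transitivity), and the paper additionally records a second, independent argument via Landstad duality and triviality of coactions with commutative crossed product, which you do not need.
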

\begin{proof}
The ``if" implication follows from part (1) of \autoref{thm: permanence properties}, since
the assumptions imply that there is an equivariant isomorphism
\[(G,C(X),\alpha)\cong (G,C(X/G)\otimes C(G),\id_{C(X/G)}\otimes \texttt{Lt}).\]

Let $\varphi\colon C(G)\to C(X)_{\I,\alpha}\cap C(X)'=C(X)_{\I,\alpha}$ be an equivariant
unital homomorphism as in \autoref{df:Rp}. By weak equivariant semiprojectivity, 
there exists a unital equivariant homomorphism
$\psi\colon C(G)\to \ell^\I_\alpha(\N,C(X))$ such that $\varphi=\kappa_{C(X)}\circ\psi$.
In particular, there is a unital equivariant homomorphism $C(G)\to C(X)$. Dually, there is an
equivariant, continuous map $\rho\colon X\to G$, which is necessarily surjective.

We show two ways of finishing the proof.

\emph{(First argument.)}
Denote by $\pi\colon X\to X/G$ the canonical quotient map onto the orbit space. (This map is a
principal $G$-bundle, but we do not need this here.) Define an equivariant continuous map
$\kappa\colon X\to X/G\times G$
by
\[\kappa(x)=(\pi(x),\rho(x))\]
for all $x\in X$. We claim that $\kappa$ is a homeomorphism.

To check surjectivity, let $(y,g)\in X/G\times G$ be given. Choose $x\in X$ such that $\pi(x)=y$,
and find $h\in G$ such that $\rho(h\cdot x)=g$. (Such element $h$ exists because the action of $G$
on itself is transitive.) It is then clear that $\kappa(h\cdot x)=(y,g)$, so $\kappa$ is surjective.

We now check injectivity. Let $x_1,x_2\in X$ satisfy $\kappa(x_1)=\kappa(x_2)$. Since $\pi(x_1)=\pi(x_2)$,
it follows that there exists $g\in G$ such that $g\cdot x_1=x_2$. Now, since $\rho(x_1)=\rho(x_2)$, we
must have $g=1_G$ and hence $x_1=x_2$.

It follows that $\kappa$ is a continuous bijection. Since $X$ and $X/G\times G$ are compact metric spaces,
it follows that $\kappa^{-1}$ is continuous, and the claim is proved. This finishes the proof.

\emph{(Second argument)}. Since there is a unital equivariant homomorphism $C(G)\to C(X)$, it follows from
Theorem~3
in~\cite{Lan_duality_1979}, that there are a coaction $\beta$ of $G$ on $C(X/G)$, and an isomorphism
\[C(X/G)\rtimes_\beta G\cong C(X)\]
that intertwines the dual $G$-action of $\beta$ and $\alpha$.
(The verification of the hypotheses of Theorem~3 in~\cite{Lan_duality_1979} takes slightly more work than for
Theorem~4 in~\cite{Lan_duality_1979}, which was needed in \autoref{thm:RokAreDual}, but it is nevertheless not
difficult. With the notation of Theorem~4 in~\cite{Lan_duality_1979}, observe that $\delta$ is will be nondegenerate
because $A$ is unital.)
Since the crossed product $C(X/G)\rtimes_\beta G$
is commutative, \autoref{lma:TrivialCoact} implies that the coaction $\beta$ must be trivial.
In this case, there is a canonical identification
$C(X/G)\rtimes_\beta G\cong C(X/G)\otimes C(G)$, which is moreover equivariant, with $\widehat{\beta}$ on the
left-hand side, and the action
$\id_{C(X/G)}\otimes \texttt{Lt}$ on the right-hand side.
The result follows.
\end{proof}


It follows that if $G$ acts on $C(X)$ with the Rokhlin property, then the induced action
of $G$ on $X$ is free. The converse is not in general true: consider, for example the circle
on the M\"obius cylinder $M$ which is given by rotating each copy of the circle, and acting trivially
on the non-orientable direction. This action is free, and the orbit space is homeomorphic to $\T$.
However, $M$ is not homeomorphic to $\T\times\T$, and thus this action does not have the Rokhlin property.

On the other hand, we have the following partial converse:

\begin{prop}
Let a Lie group $G$ act freely on a compact Hausdorff space $X$. If
$\dim(G)=\dim(X)$, then the induced action of $G$ on $C(X)$ has the Rokhlin property.
\end{prop}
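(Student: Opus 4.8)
The plan is to reduce everything to \autoref{thm:CommSysts}. Since $G$ is a compact Lie group, $C(G)$ is weakly $G$-equivariantly semiprojective with respect to the class $\mathcal{C}$ of commutative \ca s by \autoref{eg:C(G)eqsjCommut}, so the hypotheses of \autoref{thm:CommSysts} are met. The action is free by assumption, so by that theorem it remains only to produce a homeomorphism $\sigma\colon (X/G)\times G\to X$ intertwining the translation actions; equivalently, to show that the principal $G$-bundle $\pi\colon X\to X/G$ is trivial.

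First I would record the local structure. Because $G$ is a compact Lie group acting freely on the completely regular space $X$, Gleason's slice theorem shows that $\pi$ is a locally trivial principal $G$-bundle. Concretely, every orbit admits a compact tube $T=G\cdot S$ together with an equivariant homeomorphism $T\cong S\times G$ (with $G$ acting by translation on the second factor), where the slice $S$ is compact and $\pi|_S$ is a homeomorphism onto a neighborhood $\pi(S)$ in $X/G$; the interiors $\mathrm{int}(\pi(S))$ of these neighborhoods cover $X/G$.

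The heart of the argument is to deduce $\dim(X/G)=0$ from $\dim(G)=\dim(X)$. Fix a tube $T=G\cdot S\cong S\times G$. Since $T$ is closed in $X$, monotonicity of covering dimension on closed subspaces gives $\dim(T)\le\dim(X)=\dim(G)$. As $G$ is a manifold it contains a closed subset homeomorphic to the cube $I^{\dim(G)}$, so $S\times I^{\dim(G)}$ is a closed subspace of $T$, and the product formula for covering dimension with a cube yields
\[\dim(S)+\dim(G)=\dim\!\left(S\times I^{\dim(G)}\right)\le\dim(T)\le\dim(G).\]
Hence $\dim(S)\le 0$, so $\dim(\pi(S))=\dim(S)=0$. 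Passing to a finite subcover of the open cover $\{\mathrm{int}(\pi(S))\}$ of the compact space $X/G$, shrinking it to a closed cover, and applying the finite closed sum theorem, I conclude $\dim(X/G)=0$. The main obstacle I anticipate is precisely this dimension bookkeeping: one must invoke closed-subspace monotonicity together with a product-with-cube formula valid in the relevant generality (the metrizable case, where $\dim(S\times I^{n})=\dim(S)+n$ is classical, is the cleanest, and matches the metric hypotheses used in \autoref{thm:CommSysts}).

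Finally I would trivialize the bundle over the zero-dimensional compact Hausdorff base $B:=X/G$. Such a $B$ has a basis of clopen sets, so each local trivializing neighborhood can be shrunk to a clopen set; by compactness finitely many clopen trivializing sets suffice, and I disjointify them into a finite clopen partition $B=\bigsqcup_{i=1}^{k}W_i$ with equivariant homeomorphisms $\pi^{-1}(W_i)\cong W_i\times G$. Because the $W_i$ are pairwise disjoint and clopen, these local trivializations assemble into a single continuous, equivariant bijection $X\to B\times G$; as $X$ is compact and $B\times G$ is Hausdorff, it is a homeomorphism. Its inverse is exactly a homeomorphism $\sigma\colon (X/G)\times G\to X$ satisfying $g\cdot\sigma(Gx,h)=\sigma(Gx,gh)$, so $\alpha$ has the Rokhlin property by \autoref{thm:CommSysts}.
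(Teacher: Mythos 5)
Your proof is correct, and it reaches the paper's conclusion by a partially different route. The paper's own argument is shorter and rests on two citations: it simply asserts the dimension identity $\dim(X/G)=\dim(X)-\dim(G)=0$ for the free action of the compact Lie group $G$, and then invokes Mostert's theorem (Theorem~8 in~\cite{Mos_sections_1956}) to produce a \emph{global} continuous cross-section $s\colon X/G\to X$ over the zero-dimensional base; freeness then yields a continuous assignment $x\mapsto g_x$ with $g_x\cdot x=s(\pi(x))$, and $\kappa(x)=(Gx,g_x)$ is the desired equivariant homeomorphism $X\cong X/G\times G$. You instead prove the dimension drop by hand (Gleason slices, monotonicity on closed subspaces, and the product-with-cube formula) and then trivialize the bundle over the zero-dimensional base by shrinking trivializing neighborhoods to clopen sets, disjointifying, and gluing. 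What the paper's route buys is brevity and avoidance of dimension-theoretic product theorems; what your route buys is self-containedness: you do not need Mostert's section theorem, and your clopen-gluing step is an elementary replacement for it, while your slice argument actually justifies the dimension identity that the paper leaves unproved.

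One caveat deserves attention. Your key estimate uses $\dim\bigl(S\times I^{n}\bigr)=\dim(S)+n$, and you suggest falling back on the metrizable case, claiming it ``matches the metric hypotheses used in \autoref{thm:CommSysts}.'' But neither \autoref{thm:CommSysts} nor the proposition assumes $X$ metrizable --- both are stated for compact Hausdorff $X$ (the phrase ``compact metric spaces'' in the proof of \autoref{thm:CommSysts} is a slip in the paper; the statement itself has no such hypothesis). So you need the inequality $\dim(S\times I^{n})\geq\dim(S)+n$ for a general compact Hausdorff slice $S$. This is true, but it is not the classical metrizable theorem: in this generality it requires, e.g., the essential-map characterization of covering dimension together with the Hopf extension theorem and the K\"unneth formula for \v{C}ech cohomology of compact pairs (or a citation to the product theorems for compact Hausdorff spaces in the dimension-theory literature). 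This is a gap in rigor rather than a flaw in strategy --- and the paper is guilty of a stronger version of the same omission --- but as written your justification of the crucial step $\dim(X/G)=0$ points to a theorem whose stated generality does not cover the situation at hand. A minor remark in the same spirit: you only use the ``if'' direction of \autoref{thm:CommSysts}, which needs no weak equivariant semiprojectivity at all (it follows from part (1) of \autoref{thm: permanence properties}), so the appeal to \autoref{eg:C(G)eqsjCommut} is harmless but unnecessary.
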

\begin{proof}
Since $G$ is a Lie group, we have $\dim(X/G)=\dim(X)-\dim(G)=0$.
Denote by $\pi\colon X\to X/G$ the canonical quotient map. By Theorem~8
in~\cite{Mos_sections_1956}, there exists a continuous cross-section $s\colon X/G\to X$.
Given $x\in X$, there exists $g_x\in G$ such that $g_x\cdot x=(s\circ \pi)(x)$.
Moreover, since the action is free, $g_x$ is uniquely determined by $x$ and $s$, and it
is easy to verify that the assignment $x\mapsto g_x$ is continuous, using continuity of
$s$ and of the group operations.

One readily checks that the map $\kappa\colon X\to X/G\times G$, given by
$\kappa(x)=(Gx,g_x)$ for all $x\in X$,
is an equivariant homeomorphism. We conclude that $G\to\Aut(C(X))$ has the Rokhlin property.
\end{proof}


\end{document}